\documentclass[margincite]{sl2art}
\usepackage{marginfix}

\makeatletter
\def\qed@warning{}
\makeatother \usepackage{tikz-cd} \usepackage{todonotes}

\usepackage{graphicx}
\graphicspath{{fig/}}

\newcommand{\ZZ}{\mathbb{Z}}

\newcommand{\CC}{\mathbb{C}}

\renewcommand{\Re}{\operatorname{Re}}
\renewcommand{\Im}{\operatorname{Im}}

\newcommand{\iso}{\cong}

\newcommand{\set}[1]{\left\{ \def\given{\ \middle| \ }  #1 \right\}  }

\newcommand{\slg}{\mathrm{SL}_2(\CC)}
\newcommand{\pslg}{\mathrm{PSL}_2(\CC)}

\newcommand{\vol}{\operatorname{Vol}}
\newcommand{\cs}{\operatorname{CS}}
\newcommand{\cvol}{\operatorname{V}}

\newcommand{\volf}{\mathcal{V}}

\newcommand{\repv}{\mathsf{R}}
\newcommand{\decrepv}{\mathsf{D}}
\newcommand{\logrepv}{\mathsf{S}}

\newcommand{\mer}{\mathfrak{m}}
\newcommand{\lon}{\mathfrak{l}}
\newcommand{\lonb}{\tilde{\mathfrak{l}}}
\newcommand{\comp}[1]{S^3 \setminus #1}
\newcommand{\extr}[1]{E(#1)}

\NewDocumentCommand{\homol}{D[]{1} m D[]{} }{\operatorname{H}_#1(#2)^{#3}}
\NewDocumentCommand{\cohomol}{D[]{1} m D[]{} }{\operatorname{H}^#1(#2)^{#3}}

\newcommand{\dil}{\operatorname{Li}_2}
\newcommand{\dilr}{\mathcal{L}}

\NewDocumentCommand{\qsl}{O{\xi}}{\mathcal{U}_{#1}(\mathfrak{sl}_2)}
\NewDocumentCommand{\weyl}{O{\xi}}{\mathcal{W}_{#1}}

\newcommand{\defeq}{:=}
\DeclareMathOperator{\tr}{tr}

\newcommand{\unknot}{\bigcirc}
\newcommand{\lens}[1]{\Lambda(#1)}
\newcommand{\storus}{W}

\newcommand{\cf}[1]{\left[ #1 \right]}

\declaretheorem[style=bigtheorem,title=Theorem,refname={Theorem,Theorems},Refname={Theorem,Theorems}]{bigtheorem}
\declaretheorem[style=theorem]{proposition}
\declaretheorem[style=theorem,sibling=proposition]{theorem}
\declaretheorem[style=theorem,sibling=proposition]{lemma}

\declaretheorem[style=theorem,sibling=proposition]{corollary}
\declaretheorem[style=definition,sibling=proposition]{definition}
\declaretheorem[style=definition,sibling=proposition]{remark}

\usepackage{biblatex}
\addbibresource{sources.bib}

\title{Surgery calculus for classical \(\operatorname{SL}_2(\mathbb{C})\) Chern-Simons theory}
\author{Calvin McPhail-Snyder}{Duke University\\UNC Chapel Hill}
\contact{calvin@esselltwo.com}{www.esselltwo.com}

\begin{document}

\abstract{
  Classical \(\slg\)-Chern-Simons theory assigns a \(3\)-manifold \(M\) with representation \(\rho : \pi_1(M) \to \slg\) its \defemph{complex volume} \(\cvol(M, \rho) \in \CC / 2 \pi^2 i \ZZ\), with real part the volume and imaginary part the Chern-Simons invariant.
  The existing literature focuses on computing \(\cvol\) using a triangulation.
  In this paper we show how to compute \(\cvol(M, L, \rho)\) directly from a surgery diagram for \(M\) a compact oriented \(3\)-manifold with torus boundary components, embedded cusps \(L\), and representation \(\rho : \pi_1(M \setminus L) \to \slg\).
  When \(M\) has nonempty boundary \(\cvol(M, L, \rho)(\mathfrak{s})\) depends on some extra data \(\mathfrak{s}\) we call a \defemph{log-decoration}.
  Our method describes \(\rho\) in a coordinate system closely related to quantum groups, and we think of our construction as a classical, noncompact version of Witten-Reshetikhin-Turaev's quantum \(\operatorname{SU}(2)\) Chern-Simons theory.
}

\maketitle

\section{Introduction}
\subsection{The complex volume}
Let \(M\) be a compact, oriented, hyperbolic \(3\)-manifold of finite volume; being hyperbolic means that \(M\) is equipped with a complete Riemannian metric of curvature \(-1\).
The \defemph{hyperbolic volume} of \(M\) is the volume determined by the metric.
By Mostow-Prasad rigidity the hyperbolic metric is actually a topological invariant of \(M\), so the volume is as well.

There is a natural generalization of the volume which comes from a more algebraic description of the hyperbolic structure.
Because the isometry group of hyperbolic  \(3\)-space is \(\pslg\) the hyperbolic structure is equivalent to the choice of a representation \(\rho : \pi_1(M) \to \pslg\) up to conjugacy, which is again uniquely determined by \(M\).
We call it the \defemph{holonomy} of the hyperbolic structure.
When \(M\) is orientable it is spin so this representation always lifts to \(\slg\), and choices of lifts are naturally in bijection with spin structures on  \(M\).

Now let \(A\) be a flat \(\mathfrak{sl}_2\)-connection with holonomy \(\rho\).
The \defemph{complex volume}
\begin{equation}
  \label{eq:chern-simons-integral}
  \begin{aligned}
    \cvol(M, \rho)
    &= 
    \vol(M, \rho) + i \cs(M, \rho)
    \\
    &=
    -\frac{i}{4} \int_{M} \tr [A, dA] + \frac{2}{3} \tr [A, A \wedge A] \in \CC/i\pi^2 \ZZ
  \end{aligned}
\end{equation}
is an invariant of \((M, \rho)\).
The complex volume is also called the \defemph{complex Chern-Simons invariant}; its imaginary part is the usual Chern-Simons invariant.

The integral \eqref{eq:chern-simons-integral} makes sense for any \(\rho\), not just the one coming from the hyperbolic structure on \(M\).
It can be extended \cite{Meyerhoff1986} to the case where \(M\) has cusps, which means that \(M\) is noncompact but \(M = \hat M \setminus L\) for \(\hat M\) a compact orientable manifold  and \(L\) a link in \(\hat M\).
There are some subtleties with normalization: depending on whether \(M\) has cusps, \(\cvol(M, \rho)\) might be defined modulo \(\pi^2 i\) or \(2\pi^2 i\).
However, a choice of lift to \(\slg\) resolves this:
\begin{bigtheorem}
  \label{thm:cvol-simple}
  Let \(M\) be a closed oriented \(3\)-manifold, \(L\) a link in \(M\) (which we think of as a set of cusps) and \(\rho : \pi_1(M \setminus L) \to \slg\) a representation.
  We require that \(\rho\) is \defemph{parabolic along \(L\)}, which means that \(\tr \rho(\mer) = \pm 2\) for any meridian \(\mer\) of any component of \(L\).
  We allow the case that \(\rho(\mer) = \pm 1\) is trivial.

  There is a version of the complex volume
  \begin{equation}
    \cvol(M, L, \rho) \in \CC/2\pi^2 i \ZZ
  \end{equation}
  well-defined modulo \(2\pi^2i\).
  It depends only on the conjugacy class of \(\rho\), and if \(\overline{M}\) is \(M\) with the opposite orientation, then
  \begin{equation*}
    \cvol(\overline{M}, L, \rho)
    =
    \overline{\cvol(M, L, \rho)}.
    \qedhere
  \end{equation*}
\end{bigtheorem}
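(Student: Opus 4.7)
The plan is to construct \(\cvol(M, L, \rho)\) from an ideal triangulation \(\mathcal{T}\) of the cusped manifold \(M \setminus L\), following the extended Bloch group approach of Neumann and Zickert. Developing \(\rho\) across \(\mathcal{T}\) assigns to each ideal tetrahedron \(\Delta_i\) a cross-ratio shape parameter \(z_i \in \CC \setminus \{0, 1\}\), and the naive sum \(\sum_i \dilr(z_i)\) of Rogers dilogarithms is only defined modulo a lattice generated by \(\pi^2\). To pin it down I would promote each \(z_i\) to a \emph{flattened} shape parameter \((w_i, w_i')\), i.e.\ a specific choice of branches for \(\log z_i\) and \(\log(1 - z_i)\), subject to edge and cusp compatibility equations that encode the \(\slg\)-valued (rather than merely \(\pslg\)-valued) holonomy as exact equalities in \(\CC\). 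Then I would define \(\cvol(M, L, \rho) \defeq \sum_i \dilr(z_i; w_i, w_i')\) via the flattened Rogers dilogarithm.

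Next I would verify the required properties. For well-definedness modulo \(2 \pi^2 i\), the set of flattenings lying over a fixed \(\rho\) is a torsor for an integer lattice, and a direct calculation using the edge and cusp equations shows that changing the flattening shifts \(\sum_i \dilr(z_i; w_i, w_i')\) by an element of \(2 \pi^2 i \ZZ\); the parabolic hypothesis \(\tr \rho(\mer) = \pm 2\) is what makes the cusp equations well-posed. Independence of \(\mathcal{T}\) reduces to invariance under \(2\)-\(3\) Pachner moves, which is exactly the five-term identity for the extended Rogers dilogarithm. Conjugacy invariance is automatic, since shape parameters and flattenings depend on the developing map only up to the conjugation action of \(\slg\). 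For orientation reversal, a triangulation of \(\overline M\) is obtained by reflecting every tetrahedron of a triangulation of \(M\); this replaces each \(z_i\) and its flattening by the complex conjugate, and the identity \(\dilr(\bar z; \bar w, \bar w') = \overline{\dilr(z; w, w')}\) gives the claimed symmetry.

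The main obstacle is establishing the modulus \(2\pi^2 i\) rather than the coarser \(\pi^2 i\), which is precisely what distinguishes an \(\slg\)-representation from a \(\pslg\)-one. The \(\slg\) lift rigidifies the choice of \(\log \rho(\mer)\) at each cusp, excluding the flattening changes that would shift the sum by \(\pi^2 i\). I expect to handle this via an obstruction-theoretic argument identifying the flattening data with a class in an extended Bloch group whose natural dilogarithm invariant lies in \(\CC / 2 \pi^2 i \ZZ\). The degenerate case \(\rho(\mer) = \pm 1\) requires extra care: the associated cusp holonomy is trivial, but the \(\slg\) lift still provides enough rigidity to bound the flattening ambiguity, and formulating the cusp equation cleanly in this case is the most delicate part of the construction.
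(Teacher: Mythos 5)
Your route --- a flattened ideal triangulation of \(M \setminus L\), the extended Rogers dilogarithm, invariance via \(2\)--\(3\) moves and the five-term identity --- is the standard Neumann--Zickert argument, and it is genuinely different from the paper's: the paper builds \(\cvol\) from a surgery presentation, encoding \(\rho\) by shapes on a link diagram and summing crossing-by-crossing over the octahedral decomposition, falling back on triangulations (via \(3\)--\(2\) moves) only for the Reidemeister III and blow-up steps. For \(\rho\) admitting a nondegenerate flattened ideal triangulation your argument does establish the theorem, and the paper itself acknowledges that this case is already in the literature; your account of the modulus \(2\pi^2 i\) versus \(\pi^2 i\) via even flattenings and the \(\slg\) lift is also consistent with what the paper does.

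The gap is in exactly the cases the statement insists on covering. First, when \(L = \emptyset\) the space \(M \setminus L = M\) is closed and has no ideal triangulation, so your construction does not start; you would need Neumann's formulation for closed manifolds or an auxiliary cusp, together with an argument that the result is independent of that auxiliary choice. Second, and more seriously, when \(\rho(\mer) = \pm 1\) on a component of \(L\) (or \(\rho\) is otherwise geometrically degenerate) the tetrahedra adjacent to that cusp acquire shape parameters in \(\{0, 1, \infty\}\), so the terms \(\dilr(z_i; w_i, w_i')\) are simply undefined there --- the difficulty is not merely that the cusp equation is hard to formulate, as you suggest. For an arbitrary triangulation there is no evident way to regularize these divergent terms. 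The paper's reason for insisting on the octahedral decomposition is precisely that its degenerations are controlled: at a pinched crossing all four shapes tend to \(1\) in a coordinated way, the divergent logarithms cancel, and \(\volf\) extends continuously (\cref{lemma:volf-continuous-pinched}), which is what lets the definition cover all parabolic \(\rho\), including the trivial ones. To complete your proof you would either need to import such a limiting argument into the general triangulated setting or reduce the degenerate cases to nondegenerate ones by some other device.
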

We do not require \(M\) to be hyperbolic here, although of course if we evaluate \(\cvol(M, L, \rho)\) for \(\rho\) a lift of the complete hyperbolic structure of the cusped hyperbolic manifold \((M,L)\) we obtain the usual complex volume.
Forgetting the choice of lift results in a version of \(\cvol\) with values instead in \(\CC/\pi^2 i \ZZ\).

\Cref{thm:cvol-simple} is already known in various forms; we give some references later in the introduction.
The aims of this paper are to
\begin{enumerate}
  \item explain how to compute \(\cvol\) from surgery presentations, not triangulations,
  \item show that the definition makes sense for geometrically degenerate \(\rho\),
  \item discuss connections to quantum \(\mathfrak{sl}_2\) and surgery TQFT, and
  \item extend the definition to manifolds with torus boundary: in this case \(\cvol\) also depends on some extra structure called a \defemph{log-decoration}.
\end{enumerate}
Many of these results were known (perhaps implicitly) to experts: our goals are to study them systematically and to show how they relate to the shape coordinate formalism of \cite{McPhailSnyder2022}.
Some of our proofs (in particular, of invariance under the Reidemeister III and blow-up moves) still rely on triangulations, so our work is not (yet) independent of the triangulation formalism.

\subsection{Manifolds with boundary and log-decorations}
To compute \(\cvol\) it is useful to be able to cut and glue \(M\) along embedded tori, and 
to do this we need to extend  \(\cvol\) to the case where \(M\) has torus boundary components.
When we do \(\cvol\) depends on some extra boundary data.

\begin{definition}
  Let \(M\) be a compact oriented \(3\)-manifold and \(L\) a link in the interior of \(M\).
  Assume that \(\partial M = T_1 \amalg \cdots \amalg T_n\) is a disjoint union of tori.
  Each \(\pi_1(T_j) \iso \ZZ^2\) is abelian, so \(\rho(\pi_1(T_j))\) is always conjugate to the subgroup \(B \subset \slg\) of upper-triangular matrices.
  A \defemph{decoration} \(\delta\) of \(\rho\) is an identification of \(\rho(\pi_1(T_j))\) with a subgroup of \(B\).
\end{definition}
The original definition \cite[Section 4]{Garoufalidis2015} of decoration is slightly different;
here we are using the characterization of \cite[Proposition 4.6]{Garoufalidis2015}.
Observe that a decoration \(\delta\) gives a choice \(\delta(x)\) of eigenvalue of \(\rho(x)\) for every \(x \in \pi_1(T_j)\), since%
\note{
  Our choice to use lower-triangular matrices here is nonstandard but matches the conventions of \cite{McPhailSnyder2022}.
  It does not affect the theory in any significant way.
}
\[
  \rho(x) \text{ is conjugate to }
  \begin{pmatrix}
    \delta(x) & 0 \\
    * & \delta(x)^{-1}
  \end{pmatrix}
\]
Another way to phrase this is to say that \(\rho\) induces a homomorphism
\[
  \rho : \homol{\partial M;\ZZ} = \bigoplus_{j=1}^{n} \homol{T_j;\ZZ} \to \slg.
\]
and a decoration induces a lift
\[
  \delta : \homol{\partial M;\ZZ} \to \CC^{\times}
\]
which by Poincar\'{e} duality we can think of as a cohomology class \(\delta \in \cohomol{\partial M;\CC^{\times}}\).
\begin{definition}
  A \defemph{log-decoration} \(\mathfrak{s}\) of \(\rho\) is a cohomology class \(\mathfrak{s} \in \cohomol{\partial M; \CC}\) with
  \begin{equation}
    \label{eq:boundary-log-compatibility}
    \exp(2 \pi i \mathfrak{s}(x)) = \delta(x)
    \text{ for any } x \in \homol{\partial M; \ZZ}
  \end{equation}
  for some decoration \(\delta\) of \(\rho\).
\end{definition}

In practice, choosing a meridian \(\mer\) and longitude \(\lon\) gives a basis of each \(\homol{T;\ZZ}\), so \(\mathfrak{s}\) is determined by the values \(\mathfrak{s}(\mer) = \mu\) and \(\mathfrak{s}(\lon) = \lambda\).

\begin{remark}
  The group \(\cohomol{\partial M; \ZZ}\) acts on the space of log-decorations for \(\rho\) via
  \[
    (\alpha + \mathfrak{s})(x) = \mathfrak{s}(x) + \alpha(x)
    \text{ for }
    \alpha \in \cohomol{\partial\extr L; \ZZ}
  \]
  so we can think of them as generalized spin structures.
  They are closely related to the cohomology classes appearing in non-semi-simple TQFT \cite{Blanchet2016}.
\end{remark}

\begin{remark}
  Here is a slightly different perspective.
  Let \(\repv(M, L)\) be the representation variety of \(M \setminus L\), that is the space of homomorphisms \(\rho : \pi_1(M \setminus L) \to \slg\).
  The space of decorated representations \(\decrepv(M, L)\) covers \(\repv(M,L)\); this cover is generically 2-1, but is more complicated at certain points, such as boundary-parabolic \(\rho\) where the eigenvalues are \(\pm 1\).
  However, if we use \(\decrepv(M,L)\) as the base the space \(\logrepv(M, L)\) of log-decorated representations is a genuine covering space
  \[
    \cohomol{\partial M; \ZZ} \to \logrepv(M, L) \to \decrepv(M, L)
  \]
  with deck transformation group \(\cohomol{\partial M; \ZZ}\).
  When \(L = \emptyset\), \(\logrepv(M, \emptyset)\) is the space \(\mathcal{R}(M)\) which \citeauthor{Marche2012} \cite{Marche2012} uses to give a geometric definition of the complex volume.
\end{remark}

We can now state our generalization of \cref{thm:cvol-simple}.

\begin{bigtheorem}
  \label{thm:cvol-for-manifolds-with-boundary}
  Let \(M\) be a manifold with torus boundary components \(\partial M = T_1 \amalg \cdots \amalg T_n\).
  Choose \(L\) a link in \(M\) and \(\rho : \pi_1(M \setminus L) \to \slg\) a representation parabolic along \(L\).
  Then:
  \begin{enumerate}
    \item the complex volume \(\cvol(M, L, \rho)(\mathfrak{s}) \in \CC/2\pi^2 i \ZZ\)  is well-defined once we choose a log-decoration \(\mathfrak{s}\) for \(\rho\).
      It depends on \(\rho\) only up to conjugacy.
    \item If \(\overline{M}\) is \(M\) with the opposite orientation, then
      \begin{equation}
        \cvol(\overline{M}, L, \rho)
        =
        -\cvol(M, L, \rho).
      \end{equation}

    \item
  If \(\mathfrak{s}\) and \(\mathfrak{s}'\) are two different log-decorations, then
      \begin{equation}
        \label{eq:boundary-log-dependence}
        \cvol(M, L, \rho)(\mathfrak{s}')
        -
        \cvol(M, L, \rho)(\mathfrak{s})
        \equiv
        4\pi^2 i\sum_{j=1}^{n}
        \Delta \lambda_j \mu_j - \Delta\mu_j \lambda_j
        \pmod{2 \pi^2 i \ZZ}
      \end{equation}
      where \(\mu_j = \mathfrak{s}(\mer_j)\), \(\lambda_j = \mathfrak{s}(\lon_j)\) and similarly for \(\mathfrak{s}'\),
      \((\mer_j, \lon_j)\) is an oriented meridian-longitude%
      \note{
        Meridians \(\mer_j\) make sense for any \(M\): they are generators of the kernel of the inclusion \(\partial M \to M\) on homology.
        We then choose \(\lon_j\)  to be an independent element of \(\homol{T_j;\ZZ}\) so that \(\mer_j, \lon_j\) is appropriately oriented.
      }
      pair for the \(j\)th component of \(\partial M\),
      and \(\Delta \lambda_j = \lambda_j ' - \lambda_j\), \(\Delta \mu_j = \mu_j' - \mu_j\).
    \item Let \(L'\) be the link obtained from \(L\) by removing all the components along which \(\rho\) is trivial.
      Then \(\cvol(M, L, \rho) = \cvol(M, L', \rho) \) for every log-decoration.\qedhere
  \end{enumerate}
\end{bigtheorem}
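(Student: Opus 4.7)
My approach is to define $\cvol(M, L, \rho)(\mathfrak{s})$ in terms of an ideal triangulation of $M$, extending the extended-Bloch-group formula of Neumann--Zickert (which gives \cref{thm:cvol-simple} in the closed case) by incorporating $\mathfrak{s}$ as boundary log-lift data, and then verifying each of the four claims directly from this formula.

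\emph{Definition and well-definedness (part (1)).} Choose an ideal triangulation $\mathcal{T}$ of $M$ with ideal vertices along $L$ and $\partial M$. Using $\rho$ together with $\mathfrak{s}$, assign logarithmic shape parameters $(z_\Delta, p_\Delta, q_\Delta)$ to each tetrahedron of $\mathcal{T}$, with boundary log-data on the cusp cross-sections of $\partial M$ pinned down by $\mathfrak{s}$. Set
\[
  \cvol(M, L, \rho)(\mathfrak{s}) \defeq \sum_{\Delta \in \mathcal{T}} \dilrn(z_\Delta, p_\Delta, q_\Delta) \pmod{2\pi^2 i \ZZ}
\]
where $\dilrn$ is the Neumann--Zickert extended Rogers dilogarithm. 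Triangulation-independence follows from the pentagon relation for $\dilrn$ (i.e., invariance under 2--3 moves), and the $\cohomol{\partial M; \ZZ}$-torsor of log-decorations is exactly the torsor of log-lift choices for shape parameters at $\partial M$, which is why the sum depends on $\mathfrak{s}$ but is otherwise well-defined. Simultaneous conjugation of all shape parameters leaves the formula unchanged, giving conjugacy invariance in $\rho$.

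\emph{Parts (2) and (4).} Orientation reversal of $M$ simultaneously conjugates each shape parameter and negates each tetrahedral sign, so via the functional equation $\dilrn(\bar z, -p, -q) = -\overline{\dilrn(z, p, q)}$ of the extended dilogarithm the sum negates, giving part (2). For part (4): if $\rho(\mer) = \pm 1$ along a meridian $\mer$ of some component of $L$, then $\rho$ extends across a meridian disk capping $\mer$. Inserting such a disk and doing a sequence of 2--3 moves contracts the cusp vertex to an interior vertex, which may be removed without changing the sum; this gives $\cvol(M, L, \rho) = \cvol(M, L', \rho)$.

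\emph{Part (3) and main obstacle.} A shift $\alpha = \mathfrak{s}' - \mathfrak{s} \in \cohomol{\partial M; \ZZ}$ affects only the boundary log-lifts attached to $\partial M$, not the bulk shape parameters themselves. The induced change in the dilogarithm sum is therefore localized on $\partial M$: a model calculation on a neighborhood of a single cusp torus, using the translation behavior of $\dilrn$ under integer shifts of its $(p, q)$ arguments and summing over a triangulation of $T^2 \times [0, \epsilon]$, yields precisely
\[
  4\pi^2 i \sum_{j=1}^{n} (\Delta\lambda_j \mu_j - \Delta\mu_j \lambda_j) \pmod{2\pi^2 i \ZZ},
\]
which is the cup-product symplectic pairing on $\cohomol{\partial M; \CC}$ -- the boundary symplectic form of classical $\slg$ Chern-Simons theory. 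This step is the main obstacle: all the delicate $2\pi^2 i \ZZ$ bookkeeping lives here, and one must track carefully how integer shifts propagate through the extended dilogarithm and assemble, after cancellation, into exactly the above skew form. Once this local computation is done, the other parts follow almost formally from the structure of the definition.
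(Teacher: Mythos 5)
Your route is genuinely different from the paper's: you define \(\cvol\) directly from an ideal triangulation of \(M\) via the Neumann--Zickert extended dilogarithm, whereas the paper defines it from a generalized surgery presentation, as \(\volf(L_0,\rho_0)(\mathfrak{s}_0)\) computed crossing-by-crossing from a shaped link diagram plus explicit solid-torus corrections \(\volf(\storus,\rho_k)(\mathfrak{t}_k)\) for the Dehn-filled components, with invariance established through Reidemeister and blow-up moves and the cancellation of the filled components' log-decoration dependence (\cref{lemma:log-decoration-indep-dehn}). As an abstract existence argument your approach is the pre-existing triangulation formalism that the paper builds on, and in the geometrically nondegenerate regime it is a legitimate alternative; but it has two genuine gaps. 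The first is degenerate representations: your formula requires an ideal triangulation of \(M\) whose shape parameters all lie in \(\CC\setminus\{0,1\}\) and realize \(\rho\), and for degenerate \(\rho\) --- reducible representations, and in particular representations trivial along components of \(L\), which is exactly the situation of part (4) --- no such nondegenerate solution of the gluing equations need exist, so the sum \(\sum_\Delta \dilrn(z_\Delta,p_\Delta,q_\Delta)\) is not even defined. Your part (4) argument (cap off a meridian disk and contract by 2--3 moves) presupposes the formula makes sense on both sides. The paper handles this by proving \(\volf\) extends continuously to pinched crossings (\cref{lemma:volf-continuous-pinched}), with the explicit limiting value \eqref{eq:crossing-volume-pinched}, after which part (4) is a direct computation with the shape \((\pm1,1,\pm1)\) (\cref{lemma:trivial-volume}); some such limiting device is unavoidable and is absent from your proposal.

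The second gap is that you assert rather than prove the two technical pillars of parts (1) and (3): that a flattening realizing a prescribed log-decoration \(\mathfrak{s}\) exists at all (Neumann's existence proof is a nontrivial combinatorial argument; the paper circumvents it by writing down explicit flattenings in Ptolemy-type shape coordinates, \cref{thm:our-flattening}), and the localization computation giving the skew form in part (3), which you correctly flag as the main obstacle but do not carry out. In the paper this is \cref{thm:diagram-flattening-dependence}, proved by explicit bookkeeping of how each \(\beta_j\) and \(\mu_j\) shift propagates through the four tetrahedra at a crossing (\cref{lemma:partition-function-flattening-dependence}) and then summing along each component. A smaller omission: invariance of the lifted dilogarithm under 2--3 moves modulo \(2\pi^2 i\) (rather than \(\pi^2 i\)) requires restricting to even flattenings, which your proposal does not address.
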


\subsection{Cutting and gluing}

Now suppose that \(M\) is obtained by gluing two other manifolds \((M_1, L_1, \rho_1)\) and \((M_2, L_2, \rho_2)\) along some boundary components.
By this we mean that 
\[
  M = M_1 \cup_h M_2
\]
where \(X_k \subset \partial M_k\) are submanifolds of the boundaries and the identification is made along an orientation-reversing homeomorphism \(h : X_1 \to X_2\).
This identification assembles a new link \(L\) out of \(L_1\) and \(L_2\) and builds a representation \(\rho : \pi_1(M \setminus L) \to \slg\) out of \(\rho_1\) and \(\rho_2\).

\begin{bigtheorem}
  \label{thm:gluing}
  Let \((M, L, \rho)\) be the manifold obtained by gluing \((M_1, L_1, \rho_1)\) and \((M_2, L_2, \rho_2)\) as above.
  Then if \(\mathfrak{s}_1, \mathfrak{s}_2\) are log-decorations for \(M_1, M_2\) compatible (defined below) with the identification,
  \[
    \cvol(M, L, \rho)(\mathfrak{s})
    =
    \cvol(M_1, L_1, \rho_1)(\mathfrak{s}_1)
    +
    \cvol(M_2, L_2, \rho_2)(\mathfrak{s}_2)
  \]
  where \(\mathfrak{s}\) is determined by \(\mathfrak{s}_1\) and \(\mathfrak{s}_2\) in the obvious way.%
  \note{
    To make it obvious: \(\mathfrak{s}\) is an element of \(\cohomol{\partial M; \CC}\) and \(\partial M\) consists of the remaining boundary components of \(\partial M_1 \amalg \partial M_2\) after the identification, so  \(\mathfrak{s}\) is just the image of \(\mathfrak{s}_1 \oplus \mathfrak{s}_2 \in \cohomol{\partial M_1; \CC} \oplus \cohomol{\partial M_2; \CC}\) in \(\cohomol{\partial M; \CC}\).
  }
\end{bigtheorem}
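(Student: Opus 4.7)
The plan is to work with the Chern-Simons integral form of \(\cvol\) and show that it decomposes as bulk plus boundary contributions in a way compatible with the gluing. For a flat connection \(A\) representing \(\rho\), one expects a presentation
\[
  \cvol(M, L, \rho)(\mathfrak{s})
  \equiv
  -\tfrac{i}{4} \int_M \tr\bigl(A \wedge dA + \tfrac{2}{3} A \wedge A \wedge A\bigr)
  \;+\; \corrtermbdr(\mathfrak{s})
  \pmod{2 \pi^2 i\ZZ},
\]
where \(\corrtermbdr(\mathfrak{s})\) is a boundary correction local to \(\partial M\), determined by the log-decoration, and chosen precisely so that the right-hand side is invariant under the gauge freedom permitted by the data \(\mathfrak{s}\). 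This is the viewpoint that made \cref{thm:cvol-for-manifolds-with-boundary} go through, and the log-decoration dependence \eqref{eq:boundary-log-dependence} is what fixes \(\corrtermbdr\) up to a term determined by \(\rho\) alone.

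Since the CS 3-form is local, the first step is immediate bulk additivity: choose flat connections \(A_1, A_2\) on \(M_1, M_2\) representing \(\rho_1, \rho_2\) so that they glue along \(h\) to a flat connection \(A\) on \(M\), possibly after a compactly supported gauge transformation near the identification (which shifts the CS integral by a multiple of \(2\pi^2i\), hence is harmless mod \(2\pi^2 i\ZZ\)). Then \(\int_M = \int_{M_1} + \int_{M_2}\) on the nose. Next, I would decompose \(\corrtermbdr(\mathfrak{s}_k) = \sum_{T \subset \partial M_k} \corrtermbdr^T(\mathfrak{s}_k|_T)\) as a sum of contributions from each torus. For the tori that are \emph{not} glued, the contributions from the \(M_k\)-side directly reassemble into the corresponding terms of \(\corrtermbdr(\mathfrak{s})\) on \(\partial M\), since \(\mathfrak{s}\) is defined from \(\mathfrak{s}_1 \oplus \mathfrak{s}_2\) by restriction. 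For a glued pair \(X_1 \sim_h X_2\), the orientation-reversing nature of \(h\) flips the sign of \(\corrtermbdr^T\) (which is ultimately a 2-form integral over the oriented boundary), and the compatibility condition on \(\mathfrak{s}_1, \mathfrak{s}_2\) — which should amount to \(h^\ast(\mathfrak{s}_2|_{X_2}) = \mathfrak{s}_1|_{X_1}\) after reconciling meridian-longitude conventions — forces these two contributions to cancel.

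The main obstacle is making the cancellation exact rather than merely modulo \(2 \pi^2 i \ZZ\), and pinning down the correct definition of compatibility. The log-decoration dependence \eqref{eq:boundary-log-dependence} involves the symplectic pairing \(\Delta\lambda_j \mu_j - \Delta\mu_j \lambda_j\) on each boundary torus, reflecting an underlying symplectic structure on the boundary character variety; the compatibility condition must be precisely the one that makes this pairing vanish on the glued tori, and in particular the meridian-longitude orientations on \(X_1\) and \(X_2\) must be chosen so that \(h_\ast\) exchanges them in an orientation-reversing way. If the paper's working definition of \(\cvol\) is instead surgery-diagrammatic, the same argument can be packaged by concatenating compatible surgery diagrams for \(M_1\) and \(M_2\) into one for \(M\), and checking additivity of whatever local dilogarithm-plus-boundary formula defines \(\cvol\); the content of the theorem — bulk locality combined with cancellation of boundary data under compatible identification — is unchanged.
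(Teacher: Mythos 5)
There is a genuine gap. Your entire argument rests on the opening display
\[
  \cvol(M, L, \rho)(\mathfrak{s})
  \equiv
  -\tfrac{i}{4} \int_M \tr\bigl(A \wedge dA + \tfrac{2}{3} A \wedge A \wedge A\bigr)
  + \corrtermbdr(\mathfrak{s}),
\]
with \(\corrtermbdr\) a sum of local, per-torus corrections that changes sign under orientation reversal of the boundary. But the invariant \(\cvol(M,L,\rho)(\mathfrak{s})\) in this paper is \emph{defined} (\cref{def:cvol}) as a dilogarithm sum over a generalized surgery presentation plus explicit solid-torus terms; no identification with a Chern--Simons integral plus a local boundary counterterm is established for manifolds with torus boundary, and establishing one is essentially as hard as the theorem you are trying to prove. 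In particular, the assertion that the compatibility condition ``must be precisely the one that makes the pairing vanish'' reverses the logical order: the condition \eqref{eq:boundary-log-dehn-filling} is given, and you must \emph{verify} cancellation against it, not define it to make cancellation happen. You also need the glued flat connections to agree on the nose along the identified tori (not merely up to conjugation), which is exactly where the decoration data intervenes and which your gauge-fixing remark does not resolve.

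Your closing sentence — that one can ``package'' the argument by concatenating surgery diagrams — names the actual proof but supplies none of its content. The paper realizes the gluing homeomorphism \eqref{eq:torus-gluing-matrix} by a continued-fraction expansion \(\cf{a_1,\dots,a_k}\) and a chain of unknots with framings \(0, -a_1, \dots, -a_k, 0\) linking the two exposed components (\cref{fig:torus-gluing-general-diagram}); it then chooses a shaping in which every new crossing is pinched (\cref{lemma:torus-gluing-general-shaping}), so the new crossings contribute nothing, and checks by the explicit computation of \cref{lemma:log-decoration-indep-dehn} that the added solid-torus volumes \(-4\pi^2 i\,\mathfrak{t}(\mer)\mathfrak{t}(\lon)\) cancel the dependence on the log-decorations of the glued tori. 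The nontrivial steps are (i) that an arbitrary determinant-\(-1\) gluing matrix is realized by such a chain, (ii) that the new crossings can be arranged to be pinched hence volume-free, and (iii) the arithmetic cancellation using \(p_i q_{i+1} - p_{i+1} q_i\); none of these appears in your proposal, so as written it does not constitute a proof.
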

\begin{definition}
  Let \(T\) be a component of \(X_1\) identified with a component \(T'\) of \(X_2\) when gluing \(M_1\) and \(M_2\).
  Choose meridian-longitude pairs \((\mer, \lon)\) and \((\mer', \lon')\) for them.
  During the gluing we identify \(p \mer + q \lon\) with \(\lon'\) and \(r \mer + s \lon\) with \(\mer'\) for integers \(p,q,r,s\in \ZZ\) with \(ps - qr = 1\).
  We say that log-decorations \(\mathfrak{s}_k\) for the \(M_k\) are \defemph{compatible} with the gluing if
  \begin{equation}
    \label{eq:boundary-log-dehn-filling}
    \mathfrak{s}_1(p \mer + q \lon) = \mathfrak{s}_2(\lon ')
    \text{ and }
    \mathfrak{s}_1(r \mer + s \lon) = \mathfrak{s}_2(\mer ')
  \end{equation}
  for each identified component \(T\).
\end{definition}
This condition is quite natural: for our representations \(\rho_1, \rho_2\) to give a well-defined representation of the glued manifold we must have
\[
  \rho_1 (\mer )^{p} \rho_1(\lon)^{q} = \rho_2(\lon')
  \text{ and }
  \rho_1 (\mer )^{r} \rho_1(\lon)^{s} = \rho_2(\mer')
\]
and \cref{eq:boundary-log-dehn-filling} is simply the logarithm of this equation.

Dehn filling is the special case where \(M_2\) is a disjoint union of solid tori, which suggests the method we use to define \(\cvol\) and compute it in practice.
We first define an invariant \(\volf(L_0, \rho_0)(\mathfrak{s})\), where \(L_0\) is a link in \(S^3\), \(\rho_0 : \pi_1(S^3 \setminus L_0) \to \slg\) is a representation, and \(\mathfrak{s}\) is a log-decoration of the link exterior \(\extr{L_0}\).
(\(\extr{L_0}\) is the complement of an open regular neighborhood of \(L_0\), so it is a compact submanifold of \(S^3\) with boundary a union of tori.)
We think of \(\volf\) as an invariant of link exteriors, and as a special case the value \(\volf(\unknot, \rho_0)(\mathfrak{s})\) on the unknot exterior is the complex volume of a solid torus.
Because we can obtain any \((M, L, \rho)\) by gluing solid tori to the exterior of some link \(L_0\), \cref{thm:gluing} determines \(\cvol(M, L, \rho)\) in terms of \(\volf(L_0, \rho_0)(\mathfrak{s})\) and of the added solid tori.\note{The complex volume of a solid torus is closely related to the length correction terms \(\sum_j \lambda_j\) in \cite[Theorem 14.5]{Neumann2004}.}

This construction corresponds to a representation of \((M, L, \rho)\) as a decorated link \(L_0\) in \(S^3\), where components are labeled by one of 
\begin{itemize}
  \item rational numbers \(p/q\), indicating Dehn surgery along them,
  \item \(1/0 = \infty\), indicating components of \(L\), i.e.\@ cusps of \(M\), and
  \item nothing, indicating torus boundary components.
\end{itemize}
The usual surgery calculus \cite{Gompf1999} naturally extends to these links.
We can similarly describe \(\rho\) in terms of a representation \(\rho_0 : \pi_1(S^3 \setminus L_0) \to \slg\), which can be given by a decoration of a diagram of \(L_0\) by complex numbers called a \defemph{shaping}.
To motivate these we first need to discuss the construction of \(\volf\) in more detail, which is done in the next subsection.

First we state another gluing result.
Taking the connected sum of two pairs \((M_1, L_1)\) and \((M_2, L_2)\) gives a manifold  \(M = M_1 \# M_2\) with a new link \(L = L_1 \amalg L_2\) inside it.
Similarly, representations \(\rho_k : \pi_1(M_k \setminus L_k) \to \slg\) give a new representation \(\rho = \rho_1 * \rho_2\) on the connect sum, and because \(\partial M = \partial M_1 \amalg \partial M_2\) we can combine log-decorations \(\mathfrak{s}_k\) for each \(\rho_k\) to one \(\mathfrak{s} = \mathfrak{s}_1 \oplus \mathfrak{s}_2\) for \(\rho\).
\begin{bigtheorem}
  \label{thm:connect-sum}
  The complex volume is additive under connect sum:
  \begin{equation*}
    \cvol(M_1 \# M_2, L_1 \amalg L_2, \rho_1 * \rho_2)(\mathfrak{s}_1 \oplus \mathfrak{s}_2)
    =
    \cvol(M_1, L_1, \rho_1)(\mathfrak{s}_1)
    +
    \cvol(M_2, L_2, \rho_2)(\mathfrak{s}_2).
    \qedhere
  \end{equation*}
\end{bigtheorem}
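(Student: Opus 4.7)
The plan is to reduce this to the surgery-theoretic construction of \(\cvol\) developed earlier in the paper. The key diagrammatic observation is that a surgery diagram for the connected sum \((M_1 \# M_2, L_1 \amalg L_2, \rho_1 * \rho_2)\) is obtained as the split link \(L_0 = L_0^{(1)} \sqcup L_0^{(2)} \subset S^3\), where each \(L_0^{(k)}\) is a surgery diagram for \((M_k, L_k, \rho_k)\) placed in a disjoint ball of \(S^3\). The shaping realizing \(\rho_1 * \rho_2\) restricts to a shaping realizing \(\rho_k\) on each piece, and similarly for the log-decoration data.

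First I would invoke this surgery presentation and write \(\cvol(M_1 \# M_2, L_1 \amalg L_2, \rho_1 * \rho_2)(\mathfrak{s}_1 \oplus \mathfrak{s}_2)\) as \(\volf(L_0, \rho_1 * \rho_2)(\mathfrak{s}_1 \oplus \mathfrak{s}_2)\) corrected by the solid-torus contributions from the Dehn fillings. Then I would show that \(\volf\) is additive under split unions: because \(\volf\) is assembled from local contributions at crossings, arcs, and regions of the diagram, a split diagram has no crossings or shared regions between the two sublinks, and the global ingredients (basepoint of \(\pi_1(S^3 \setminus L_0)\), branch choices for logarithms, framing) can be chosen to respect the decomposition. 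Finally, the solid-torus correction terms depend only on the log-decoration of each individual surgery component, so they too split as a sum over \(L_0^{(1)}\) and \(L_0^{(2)}\); reassembling gives the stated additivity modulo \(2\pi^2 i\).

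The main obstacle is the middle step: confirming that no cross-terms arise when the diagram splits. Concretely one must verify that any globally defined data entering \(\volf\) (for instance, integrality or framing corrections that reference the whole diagram) decomposes additively over a split link. An alternative, more intrinsic approach avoids surgery entirely: write \(M_1 \# M_2 = (M_1 \setminus B^3) \cup_{S^2} (M_2 \setminus B^3)\) and use that the Chern-Simons integral is additive over this decomposition. The contribution of each removed ball vanishes because \(\pi_1(B^3) = 1\) forces the flat \(\mathfrak{sl}_2\)-connection to be gauge-trivial there, and there are no \(S^2\)-boundary ambiguities to worry about since \(\cohomol{S^2; \CC} = 0\) leaves no room for a nontrivial log-decoration mismatch. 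This second approach is appealing because it bypasses the splitting verification for \(\volf\), at the cost of requiring a separate sphere-gluing analog of \cref{thm:gluing}; I would use whichever of the two routes meshes better with the normalization conventions already fixed in the paper.
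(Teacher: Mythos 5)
Your first route is exactly the paper's proof: present the connect sum by the disjoint union of surgery diagrams for \(M_1\) and \(M_2\), and observe that \(\volf\) is additive under disjoint union because it is defined as a sum of local crossing contributions, so a split diagram produces no cross-terms. The "main obstacle" you flag is not really one here, since nothing in the definition of \(\volf\) references the diagram globally, and the solid-torus corrections and log-decorations manifestly split component by component.
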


\begin{remark}
  Theorems \ref{thm:gluing} and \ref{thm:connect-sum} are similar to the cutting-and-gluing properties of a TQFT (topological quantum field theory),%
  \note{
    Usually TQFTs are multiplicative under gluing and connect sum, not additive, and they usually take values in \(\CC\), not \(\CC/2\pi^2 i \ZZ\).
    To fix both of these discrepancies consider \(\exp(\cvol/\pi)\) instead of \(\cvol\).
  }
  and our surgery presentations of \((M, L)\) closely resemble those for a TQFT constructed from a modular category \cite{Bakalov2001}, sometimes called a \defemph{surgery} or \defemph{Reshetikhin-Turaev} TQFT.

  However, there are some differences.
  The most obvious is the dependence on \(\rho\): the theory is no longer topological, but geometric, because \(\rho\) is a choice of generalized hyperbolic structure.
  \citeauthor{Turaev2010} \cite{Turaev2010} has previously considered such theories in the framework of \defemph{homotopy quantum field theory}.

  Another difference is that we assign a manifold with nonempty boundary a function \(\cvol(M, L, \rho)\) on the space of log-decorations, which are a torsor over \(\cohomol{\partial M; \ZZ}\).
  In a quantum field theory we would have instead have a linear map between tensor products of vector spaces of states, with each space corresponding to a boundary component.
  \citeauthor{Freed1995} \cite{Freed1995} showed that classical Chern-Simons theory assigns complex lines to the boundary components.
  It would be quite interesting to clarify the relationship between log-decorations and these lines.

    The motivating example for surgery TQFTs is Witten--Reshetikhin--Turaev theory \cite{Witten1989,Reshetikhin1991}.
    Both WRT and complex volume are related to Chern-Simons theory: one motivation for our work is to clarify these connections, which are related to the volume conjecture \cite{Murakami2010}.
  One important difference between \(\cvol\) and WRT theory is that we use the noncompact complex group \(\slg\), not its compact real form \(\operatorname{SU}(2)\).
  In addition, as mentioned above we should think of \(\cvol\) as a classical, not quantum field theory.
  The corresponding quantum field theory has not yet been constructed, but should generalize the link invariants of \citeauthor{Blanchet2018} \cite{Blanchet2018}.
  The link invariants can be computed in terms of \defemph{hyperbolic tensor networks} \cite{McPhailSnyder2022b} closely related to our computation of \(\volf\) and \(\cvol\).
\end{remark}

\subsection{Computing the volume}

Computing \(\cvol\) directly from the integral \eqref{eq:chern-simons-integral} is quite difficult.
Instead we triangulate \(M\) and compute the integral on each piece.
The representation \(\rho\) is encoded by assigning the tetrahedra \defemph{shape parameters} \(z \in \CC\) which describe how to embed them in hyperbolic space; when \(z = 0, 1\) the tetrahedron is geometrically degenerate.
The volume \(\vol(M, \rho) = \Re \cvol(M, \rho)\) can then be computed by a sum \(\sum_j D(z_j)\) over tetrahedra, where \(D\) is a version of the dilogarithm \cite{Zagier2007}, specifically the imaginary part%
\note{%
  There is an unfortunate factor of \(i\), so the \emph{real} part of \(\cvol\) comes from the \emph{imaginary} part of \(\dilr\).
}
of a function \(\dilr\) on \(\CC \setminus \set{0,1}\).

Computing the imaginary part \(\cs(M, \rho) = \Im \cvol(M, \rho)\) is significantly more difficult: we have to pass from \(D = \Im \dilr \) to the full dilogarithm \(\dilr\).
The function \(\dilr\) is holomorphic on a somewhat complicated covering space of \(\CC \setminus \set{0,1}\).
To deal with this we need to pick extra combinatorial data on the triangulation called a \defemph{flattening}, which is roughly a coherent choice of logarithms of the shape parameters \(z_j\).
Once this is done we can compute 
\begin{equation}
  \label{eq:dilog-sum-simple}
  \cvol(M, \rho) = -i \sum_j \dilr(\zeta^0_j, \zeta^1_j)
\end{equation}
in terms of the flattening \((\zeta_j^0, \zeta_j^1)_j\).
This approach is due to \citeauthor{Neumann2004} \cite{Neumann2004} and can be stated in direct geometric terms \cite{Marche2012}.

This method allows us to compute \(\cvol(M, L, \rho)\) for \(M\) a cusped manifold; strictly speaking we have been using \defemph{ideal} triangulations whose vertices lie on the cusps \(L\).
However, if the shapes \(z_j\) are deformed to instead give a representation on some Dehn filling of \((M,L)\) the sum \eqref{eq:dilog-sum-simple} will compute the volume of the filled manifold.
There are some subtleties about the flattening, which are related to our condition \eqref{eq:boundary-log-compatibility} on the log-decorations.
(See also the final condition of \cite[Theorem 14.7]{Neumann2004}.)

So far our discussion has focused on tetrahedra and triangulations of \(M\).
These are convenient for computer use: this computation has been implemented in {\scshape SnapPy} \cite{SnapPy}.
However, triangulations are somewhat hard for humans to use compared to surgery presentations.
One of our goals in writing this paper is to explain how to use \citeauthor{Neumann2004}'s simplicial formula directly from a link diagram.

Considerable progress in this direction has been made using the optimistic limit method of \citeauthor{Yokota2011} \cite{Yokota2011}.
Most recently \citeauthor{Cho2013} \cite{Cho2013} showed how to compute the complex volume of a hyperbolic link \(L\) in \(S^3\) directly from a diagram, and \citeauthor{Yoon2018} \cite{Yoon2018} extended this method to Dehn fillings of \(L\).
The key ingredient is the \defemph{octahedral decomposition} \cite{ThurstonDNotes, Kim2016}, which assigns an ideal triangulation to any diagram \(D\) of \(L\).
The shape parameters of the tetrahedra are expressed in terms of complex variables associated to parts of the diagram.

We use a version of these variables related to quantum groups.
Previous work of the author \cite{McPhailSnyder2022} explains how to describe decorated representations \(\rho : \pi_1(S^3 \setminus L) \to \slg\) by labeling the segments (edges) of a diagram of \(L\) by \defemph{shapes}, which are triples \(\chi = (a, b, m)\) of nonzero complex numbers.
The shapes of a diagram are required to satisfy some algebraic relations at each crossing which guarantee that the gluing equations (in the sense of \citeauthor{Neumann1985} \cite{Neumann1985}) of the octahedral decomposition are satisfied.

One advantage of doing this is that we can easily find flattenings by taking the logarithms of the shape parameters.
This works because the shape coordinates are \defemph{deformed Ptolemy coordinates}%
  \note{Because we do not use their results directly, we do not give a formal definition of Ptolemy coordinates, but they were the method used to determine the formula of Theorem \ref{thm:our-flattening}.}
  in the sense of \citeauthor{Zickert2007} \cite{Zickert2007, Zickert2016} and \citeauthor{Yoon2018} \cite{Yoon2018}.
  Without using Ptolemy coordinates it is not clear that flattenings even exist: Neumann originally showed \cite[Section 9]{Neumann2004} they do using a somewhat complicated combinatorial argument.

Now given a link \(L\) in \(S^3\) and a diagram \(D\) of \(L\) we can encode \(\rho : \pi_1(S^3 \setminus L) \to \slg\) by assigning shapes to \(D\).
Using these we obtain a flattening \(\mathfrak{f}\), and we can define the complex volume of \(\extr L\) as a sum over the tetrahedra
\begin{equation}
  \label{eq:volf-def-informal}
  \volf(L, \rho)(\mathfrak{s}) = -i \sum_{j} \dilr(\zeta_j^0, \zeta_j^1)
\end{equation}
where \(\mathfrak{s}\) is the log-decoration determined by \(\mathfrak{f}\) (as in \cref{def:induced-boundary-flattening}).
We can show that \(\volf\) depends only on \(\mathfrak{s}\), which is considerably simpler than the full flattening \(\mathfrak{f}\).
By Dehn filling some components of \(L\), viewing them as cusps, or leaving them unfilled this definition extends to the general case \(\cvol(M', L', \rho')\).

When we formally define \(\volf\) we will organize the sum in terms of crossings of the diagram, not tetrahedra.
This is more natural when working with diagrams (which can be broken down into crossings but no further) but has other advantages.
For example, when using earlier methods \cite{Zickert2016, Yoon2018} to compute \(\cvol\) we need to avoid certain geometrically degenerate shapings.
However, because our triangulations and flattenings are particularly regular we can make sense of these limits.
This is useful for technical reasons, and it also lets us define \(\cvol(M, L, \rho)\) for \emph{all} representations \(\rho\), not just geometrically nondegenerate ones.

\subsection{Plan of the paper}
\begin{description}
  \item[Section 2] We give an overview of ideal triangulations and their flattenings, then briefly explain how to use shape coordinates and flattenings to obtain flattened ideal triangulations of link complements.
  \item[Section 3] We show how to compute the complex volume \(\volf(L, \rho)(\mathfrak{s})\)  of a link exterior and study the dependence on the log-decoration \(\mathfrak{s}\).
  \item[Section 4] To motivate our definition for general \(3\)-manifolds we compute the complex volumes of solid tori and of lens spaces.
  \item[Section 5] We define the complex volume \(\cvol(M, L, \rho)\) of a compact, oriented \(3\)-manifold, possibly with torus boundary components and/or an embedded link \(L\), then give the proofs of our main theorems.
\end{description}

\subsection*{Acknowledgements}
Thanks to:
\begin{itemize}
  \item Christian Zickert and Matthias Goerner for some helpful discussions about complex volumes and flattening, and for sharing some unpublished results on the dependence of Neumann's volume formula on the flattening related to \cref{eq:boundary-log-dependence}.
  \item Adam S.\@ Levine, Eylem Yıldız, and David Rose for advice about surgery calculus.
\end{itemize}

\section{Shaped tangles and flattenings}
Our first step is define the complex volume \(\volf( L, \rho)\) of the exterior \(\extr L\) of a link \(L\) in \(S^3\).
To do this we need use a slightly nonstandard description of \(\rho\), which comes from decorating a diagram of \(L\) with \defemph{shapes} satisfying certain relations at the crossings.
These coordinates are closely related to a certain ideal triangulation of \(\comp L\) determined by \(D\) called the \defemph{octahedral decomposition}.
In this section we briefly discuss the relevant parts of this story; we refer to previous work \cite{McPhailSnyder2022} for more details, motivation, and a connection to quantum groups.
We first give some background on ideal triangulations.

\subsection{Ideal triangulations}

\begin{definition}
  Let \(L\) be a link in \(S^3\).
  An \defemph{ideal triangulation} is a triangulation \(\mathcal{T}\) of \(\comp L\) by ideal tetrahedra (\(3\)-simplices with their vertices removed) so that the missing vertices lie on \(L\).%
  \note{
    More formally, consider the space \(\widetilde{\comp L}\) obtained by collapsing each component of \(L\) to a point.
    An ideal triangulation is a triangulation of \(\widetilde{\comp L}\) in which all the vertices lie on the image of \(L\).
    We can also consider ideal triangulations of spaces other than link complements; technically speaking the octahedral decomposition is an ideal triangulation of \(\comp L\) minus two points.
  }
  An ideal tetrahedron \(\tau \in \mathcal{T}\) is \defemph{ordered} if we pick a labeling of its vertices by \(\set{0,1,2,3}\).
  This induces a total ordering on the vertices, hence an orientation of \(\tau\).
  If this orientation agrees with the orientation of \(\tau\) coming from \(\comp L\) then we assign \(\tau\) the sign \(+1\), and if they differ we assign \(\tau\) the sign \(-1\).
  From now on an ideal triangulation \(\mathcal{T} = \set{\tau_j}_j\) includes a choice of orderings of each tetrahedron and in particular the induced signs \(\epsilon_j\).
\end{definition}

\begin{marginfigure}
  \centering
  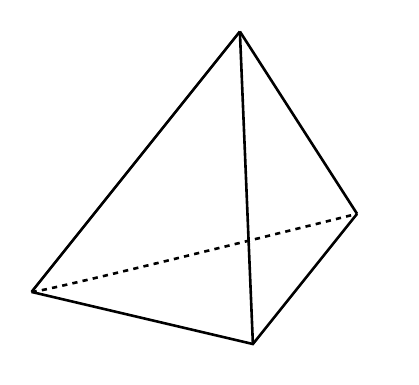
  \caption{Shape parameters assigned to the edges of a labeled tetrahedron.}
  \label{fig:tetrahedron-conventions-small}
\end{marginfigure}

\begin{definition}
  A \defemph{shaped} tetrahedron is an ordered tetrahedron \(\tau\) along with a choice of \defemph{shape} \(z^0 \in \CC \cup \set \infty\).
  We assign  \(z^0\) to the edges \(01\) and  \(23\).
  Edges \(12\) and \(03\) are assigned \(z^1\) and edges \(02\) and \(13\) are assigned \(z^2\), where
  \begin{equation}
    (z^{1})^{\epsilon} = \frac{1}{1- (z^0)^{\epsilon}}
    \text{ and }
    (z^{2})^{\epsilon} = 1- \frac{1}{(z^0)^{\epsilon}}
  \end{equation}
  and \(\epsilon\) is the sign of \(\tau\).
  These conventions are summarized in Figure \ref{fig:tetrahedron-conventions-small}.
  We say \(\tau\) is \defemph{degenerate} if one (hence all of) its shape parameters is \(0\), \(1\), or \(\infty\).

  An ideal triangulation \(\mathcal{T}\) is \defemph{shaped} if its tetrahedra are all assigned shapes so that at each edge \(e\) of \(\mathcal{T}\),
  \begin{equation}
    \label{eq:edge-gluing-equation}
    \prod_{j} z_j^{\nu(j)} = 1
  \end{equation}
  where the product is over all tetrahedra \(\tau_j\) glued to edge \(e\) and \(\nu(j)\) is \(0\), \(1\), or \(2\) depending on the type of edge glued from \(\tau_j\).
\end{definition}

The system of equations \eqref{eq:edge-gluing-equation} for all the edges of \(\mathcal{T}\) are sometimes called the Neumann--Zagier \cite{Neumann1985} gluing equations.
The shape of a tetrahedron describes its hyperbolic structure and the gluing equations guarantee that the structures on the tetrahedra assemble together to give one on \(\comp L\).

\begin{remark}
  We have picked a somewhat nonstandard convention on signs for our tetrahedra, but it turns out be convenient.
  It is known \cite[Proposition 5.7]{Zickert2007} that when considering the fundamental classes of a triangulated \(G\)-manifold%
  \note{%
    By \(G\)-manifold we mean a manifold \(M\) with a representation \(\pi_1(M) \to G\).
  }
  \(M\) one must consider simplices with vertex orderings.
  Each simplex \(\Delta\) inherits two orientations: one from the vertex ordering and one from \(M\).
  If the match, we say \(\Delta\) is positive, and if they don't it is negative.

  When \(G = \pslg\), we associate a shape parameter to \(\Delta\) by taking the cross-ratio of its vertices.
  When \(\Delta\) is negative, we need to take the cross-ratio in a different order to match the orientation of \(M\), and an edge of \(\Delta\) with shape \(z\) contributes \(z^{-1}\) to the gluing equation instead of \(z\).
  It turns out to be more convenient to take a different convention: we define the shape of a negative tetrahedron to be the \emph{inverse} of the cross-ratio of the usual vertex order.
  One advantage is that we can avoid signs in our gluing equations.
  Another is that then we get the usual relationships
  \begin{align}
    z^{1} &= \frac{1}{1-z^0}
          & 
    z^{2} &= 1 - \frac{1}{z^0}
          &
    \text{for } \epsilon &= 1
    \\
  \intertext{with a slight variation for negative tetrahedra:}
    z^{1} &= 1-\frac{1}{z^0}
          & 
    z^{2} &= \frac{1}{1 - z^0}
          &
    \text{for } \epsilon &= -1.
  \end{align}

  However, we still need to remember that a shaped tetrahedron \((\Delta, z, -1)\) is secretly one with shape \(1/z\), so we compute the volume of \((\Delta, z, \epsilon)\) as  \(\epsilon \Im R(z^{\epsilon})\), where \(R\) is the Rogers dilogarithm.
  It turns out that \(-\Im R(1/z) = \Im R(z)\), so this complication is not so important when computing ordinary hyperbolic volume.
  However, when computing \emph{complex} volume the identity \(R(1/z) = -R(z)\) no longer holds in general; we need to evaluate our lifted dilogarithm on the value \(1/z\).
  This is why we will later (in \cref{def:tetrahedron-flattening}) define a flattening to be a logarithm of \(z^{\epsilon}\), not of \(z\).
\end{remark}

\subsection{Flattenings and complex volume}

In order to compute the imaginary part of the complex volume we need an extra choice of structure on our shaped ideal triangulation called a flattening.
Flattenings are related to but more complicated than log-decorations.

\begin{definition}
  \label{def:tetrahedron-flattening}
  A \defemph{flattening} of a shaped labeled tetrahedron \((\tau, \epsilon, z^0, z^1, z^2)\) is a tuple \((\zeta^0, \zeta^{1}, \zeta^{2})\) with
  of complex numbers
  \begin{equation}
    \label{eq:flattening-gluing-condition}
    \exp (\epsilon \zeta^{0}) = z^0, 
    \exp (\epsilon \zeta^{1}) = z^1, 
    \exp (\epsilon \zeta^{2}) = -z^2, 
    \text{ and }
    \sum_{k} \zeta^k = 0.
  \end{equation}
  The last condition means that \(\zeta^2 = -\zeta^0 - \zeta^1\), so we usually refer to a flattening by just \((\zeta^0, \zeta^1)\).

  A \defemph{flattening} of an ideal triangulation \(\mathcal{T} = \set{\tau_j}_j\) is a flattening of each \(\tau_j\) so that at every edge \(e\) of \(\mathcal{T}\)
  \begin{equation}
    \label{eq:edge-flattening-equation}
    \sum_{j} \zeta_j^{\nu(j)} = 0
  \end{equation}
  where the sum is over all tetrahedra \(\tau_j\) glued to edge \(e\) and \(\nu(j)\) is \(0\), \(1\), or \(2\) depending on the type of edge glued from \(\tau_j\).
  Observe that \eqref{eq:edge-flattening-equation} is the logarithm of \eqref{eq:edge-gluing-equation}.
\end{definition}
More generally, a flattening has \(\exp(\epsilon \zeta^{k}) = \pm z^{k}\); we can use a more restrictive definition because of the regularity of our triangulations.
This means that we only consider what \citeauthor{Neumann2004} \cite{Neumann2004} calls \emph{even} flattenings.

\begin{definition}
  A flattening can also be specified by a triple \((z; p^0, p^1)\), where \(z = z^0\) and the integers \((p^0, p^1)\) satisfy
  \[
    \zeta^0 = \log z + 2 \pi i p^0
    \text{ and }
    \zeta^1 = -\log(1-z) + 2 \pi i p^1
  \]
  In this context, it is natural to think of the space of flattenings as a Riemann surface over \(\CC \setminus \set{0,1}\).
  Specifically, consider the surface \(\CC_{\text{cut}}\) obtained by cutting \(\CC\setminus\{0,1\}\) along \((\infty, 0]\) and \([1,\infty)\).
  We can construct a \(\ZZ^2\)-cover \(\Sigma\) of \(\CC\setminus\{0,1\}\) by identifying the sheets of \(\CC_{\text{cut}} \times \ZZ^{2}\) via
  \begin{align*}
    (x + 0i; p^0,p^1) &\sim (x - 0i; p^0+1,p^1) & x &\in (-\infty, 0)
    \\
    (x + 0i; p^0,p^1) &\sim (x - 0i; p^0,p^1+1) & x &\in (1,\infty)
  \end{align*}
  We think of \(\Sigma\) as the space of flattened tetrahedra.
\end{definition}

\begin{definition}
  Fix the standard branch of the logarithm, with a branch cut from \(0\) to \(-\infty\) and arguments in \((-\pi, \pi]\).
  The \defemph{lifted dilogarithm} is the function \(\Sigma \to \CC / 2\pi^2 \ZZ\) given by
  \begin{equation}
    \label{eq:lifted-dilog-def}
    \dilr(z;p^{0},p^{1})
    \defeq
    R(z) - \frac{\pi^2}{6}
    +\frac{2\pi i}{2}( p^0 \log(1-z) + p^1 \log(z) )
  \end{equation}
  where
  \begin{equation}
    R(z) \defeq \dil(z) +  \frac{1}{2} \log(z) \log(1-z) 
  \end{equation}
  is the \defemph{Rogers dilogarithm} and
  \begin{equation}
    \dil(z) \defeq \int_{0}^{z} - \frac{\log(1-t)}{t} dt
  \end{equation}
  is the usual dilogarithm function.
  The imaginary part of \(R(z)\) is denoted \(D(z)\) in \cite{Zagier2007}.
\end{definition}

\begin{proposition}
  \(\dilr\) extends to a continuous function \(\dilr : \Sigma \to \CC/2\pi^2 \ZZ\).
\end{proposition}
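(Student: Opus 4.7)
The plan is to verify that $\dilr$ is well-defined and continuous across the two sheet identifications in the covering $\Sigma \to \CC\setminus\{0,1\}$. On each sheet $\CC_{\text{cut}} \times \{(p^0,p^1)\}$ the function $\dilr$ is already holomorphic, since $R(z)$, $\log z$, and $\log(1-z)$ are all holomorphic on $\CC_{\text{cut}}$ by construction of the two cuts. So the entire content of the proposition is to check, for each of the cuts $(-\infty,0)$ and $(1,\infty)$, that the branch jump of $\dilr$ is exactly compensated modulo $2\pi^2 \ZZ$ by the integer shift in $(p^0, p^1)$ specified by the identification.

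For the cut $(-\infty,0)$: using the principal branch with $\arg \in (-\pi,\pi]$, only $\log z$ jumps, with $\log z|_{x+0i} - \log z|_{x-0i} = 2\pi i$, while $\dil$ and $\log(1-z)$ are continuous in a neighborhood of this cut. Hence $R(x+0i) - R(x-0i) = \tfrac{1}{2}(2\pi i)\log(1-x) = \pi i\log(1-x)$. The shift $p^0 \mapsto p^0 + 1$ changes the term $\pi i\,p^0 \log(1-z)$ by $-\pi i \log(1-x)$, which exactly cancels the $R$-jump, and the remaining contribution from $\pi i\,p^1 \log z$ is $\pi i \cdot 2\pi i \cdot p^1 = -2\pi^2 p^1 \in 2\pi^2\ZZ$. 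So continuity holds across this cut modulo $2\pi^2\ZZ$.

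For the cut $(1,\infty)$ both $\dil(z)$ and $\log(1-z)$ jump, so more care is needed. I would compute the monodromy of $\dil$ using the inversion identity $\dil(z) + \dil(1-z) = \pi^2/6 - \log z \log(1-z)$: for real $x > 1$ the argument $1-z$ stays in the principal sheet, so $\dil(1-z)$ is continuous, and reading off the identity reduces the jump of $\dil(z)$ to the jump of $-\log z \log(1-z)$, which is elementary. An analogous bookkeeping then shows that the shift $p^1 \mapsto p^1 + 1$ cancels the $R$-jump and leaves a residual contribution $-2\pi^2 p^0 \in 2\pi^2\ZZ$ from $\pi i\,p^0 \log(1-z)$.

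The main obstacle is purely bookkeeping: the Rogers dilogarithm $R$ mixes three functions with different branch cuts, and one has to keep directions and signs straight simultaneously on both cuts — the most error-prone step being the monodromy of $\dil$ across $[1,\infty)$. No deeper technical issue arises, and as a byproduct the calculation exhibits $2\pi^2\ZZ$ as precisely the subgroup one must quotient by for $\dilr$ to descend to a continuous (indeed locally holomorphic) function on $\Sigma$.
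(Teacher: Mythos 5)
Your proof is correct, but it takes a genuinely different route from the paper: the paper's proof is a one-line citation of \cite[Proposition 2.5]{Neumann2004}, translating notation ($(p^0,p^1)$ corresponds to Neumann's $(2p,2q)$) and noting that restricting to even flattenings upgrades continuity from modulo $\pi^2$ to modulo $2\pi^2$. You instead verify the statement directly by branch-cut bookkeeping, and your computation checks out. Across $(-\infty,0)$ only $\log z$ jumps (by $2\pi i$ from below to above), so $R$ jumps by $\pi i\log(1-x)$, which is cancelled by the $p^0\mapsto p^0+1$ shift, leaving $-2\pi^2 p^1\in 2\pi^2\ZZ$. Across $(1,\infty)$, the identity $\dil(z)+\dil(1-z)=\pi^2/6-\log z\log(1-z)$ does correctly reduce the monodromy of $\dil$ to that of $-\log z\log(1-z)$ (since $1-z$ stays off the cut of $\dil$), giving $\dil(x+0i)-\dil(x-0i)=2\pi i\log x$ and hence an $R$-jump of $\pi i\log x$, cancelled by the $p^1$ shift with residue $\pm 2\pi^2 p^0\in 2\pi^2\ZZ$. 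What your approach buys is a self-contained argument that also makes visible \emph{why} the quotient is exactly $2\pi^2\ZZ$ for these even flattenings; what the paper's citation buys is brevity and consistency with Neumann's framework, which the rest of the paper leans on (e.g.\ in \cref{thm:volume-neumann}). The one place to be careful in your write-up is the branch validity of the reflection identity near $(1,\infty)$ — you flag this yourself, and the signs you report are consistent with the standard principal-branch conventions the paper fixes.
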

\begin{proof}
  This is a consequence of \cite[Proposition 2.5]{Neumann2004}.
  In their notation our \((p^0, p^1)\) are their \((2p,2q)\), and we exclusively consider \emph{even} flattenings, so we get a continuous function modulo  \(2\pi^2\), not modulo \(\pi^2\).
\end{proof}

\begin{theorem}
  \label{thm:volume-neumann}
  Let \(L_0\) be a link in \(S^3\) and \(\rho_0 : \pi_1(S^{3} \setminus L) \to \slg\) a representation.
  Choose a flattened shaped triangulation \(\set{(\tau_j, \epsilon_j, \zeta_j^{0}, \zeta_{j}^{1})}_j\) of \(S^3 \setminus L_0\) in which all the tetrahedra are geometrically nondegenerate.
  Write \(\mathfrak{s}\) for the log-decoration on \(\extr{L_0}\) induced by the flattening.%
  \note{
    We describe how this works for the octahedral decomposition in \cref{def:induced-boundary-flattening}.
    Something similar works for a general triangulation, for example in \cite[Section 15]{Neumann2004}.
  }
  Suppose that \(\rho_0\) admits surgery along \(L_0\) to yield a \(\slg\)-manifold \((M, \rho)\), where \(\rho : \pi_1(M) \to \slg\) is induced by \(\rho_0\) by the surgery and \(\mathfrak{s}\) is compatible with the framing in the sense that
  \begin{equation}
    \label{eq:boundary-log-condition-Neumann14.7}
    p_j \mathfrak{s}(\mer_j) + q_j \mathfrak{s}(\lon_j) = 0
  \end{equation}
  for each component \(j\) with framing \(p_j/q_j\).
  The complex volume of \((M,\rho)\) is given by
   \[
     \cvol(M,\rho) = 
     -i \sum_{j} \epsilon_j \dilr(\zeta^{0}_j, \zeta^{1}_j) \in \CC/\pi^2 i \ZZ.
     \qedhere
  \]
\end{theorem}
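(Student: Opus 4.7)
The plan is to reduce the statement to an existing form of Neumann's complex volume formula for triangulated cusped manifolds (most naturally \cite[Theorem 14.7]{Neumann2004}), and to check that our hypothesis \eqref{eq:boundary-log-condition-Neumann14.7} on the log-decoration \(\mathfrak{s}\) is exactly the condition Neumann requires of a flattening in order for his formula to compute the complex volume of a Dehn filling rather than just of the cusped piece.

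First I would recall Neumann's setup. A flattened shaped ideal triangulation \(\mathcal{T} = \{(\tau_j, \epsilon_j, \zeta_j^0, \zeta_j^1)\}\) of \(\comp{L_0}\) satisfying the edge equations \eqref{eq:edge-flattening-equation} determines an element in the extended Bloch group \(\widehat{\mathcal{B}}(\CC)\), and the Rogers dilogarithm (in its lifted form \(\dilr\)) sends this class to \(\cvol(\comp{L_0}, \rho_0)\) modulo \(\pi^2 i \ZZ\). Because we have restricted to even flattenings the value is well-defined in \(\CC/2\pi^2 i\ZZ\) at the level of the individual tetrahedra, but after summing and taking account of the edge relations the invariant lives in \(\CC/\pi^2 i\ZZ\) in general, which matches the stated target. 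So for the cusped manifold \((\comp{L_0}, \rho_0)\) with its induced log-decoration \(\mathfrak{s}\) on \(\partial \extr{L_0}\) the sum \(-i \sum_j \epsilon_j \dilr(\zeta_j^0, \zeta_j^1)\) already computes \(\volf(L_0, \rho_0)(\mathfrak{s})\); the task is to identify this with \(\cvol(M, \rho)\) after surgery.

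For the Dehn filling step I would appeal to \cref{thm:gluing}: the filled manifold \(M\) is obtained by gluing solid tori \(V_j\) to \(\extr{L_0}\) along the surgery slopes \(p_j \mer_j + q_j \lon_j\). By that gluing theorem
\[
\cvol(M, \rho) = \volf(L_0, \rho_0)(\mathfrak{s}) + \sum_j \cvol(V_j, \rho_j)(\mathfrak{s}_j)
\]
whenever the log-decorations match across the gluing. The compatibility condition \eqref{eq:boundary-log-dehn-filling} for a solid-torus filling along the slope \(p_j \mer_j + q_j \lon_j\) forces \(\mathfrak{s}\) to vanish on that slope, which is exactly the hypothesis \eqref{eq:boundary-log-condition-Neumann14.7}. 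Under this condition each solid-torus contribution \(\cvol(V_j, \rho_j)(\mathfrak{s}_j)\) vanishes modulo \(\pi^2 i\ZZ\), because the solid-torus volume computed in Section 4 is essentially the ``length correction term'' \(\lambda_j\) (cf.\ the footnote citing \cite[Theorem 14.5]{Neumann2004}) and this vanishes precisely when the meridian pairing \(\mathfrak{s}(p_j \mer_j + q_j \lon_j) = 0\).

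The hard part is the bookkeeping in the previous paragraph, specifically verifying that our shape/flattening convention (where negative tetrahedra carry the inverse cross-ratio, and flattenings are logs of \(z^{\epsilon}\) rather than of \(z\)) produces exactly Neumann's sum with signs \(\epsilon_j\) and exactly Neumann's completeness/Dehn-filling condition on the cusp. Concretely I would (i) match the induced log-decoration of \cref{def:induced-boundary-flattening} against Neumann's cusp logarithmic holonomy coming from the flattening at each cusp triangulation, (ii) use \(\dilr(1/z; \ldots) = -\dilr(z; \ldots)\) at the level of signs to absorb the \(\epsilon_j\) into Neumann's statement for positive tetrahedra, and (iii) check that the vanishing of \(p_j \mathfrak{s}(\mer_j) + q_j \mathfrak{s}(\lon_j)\) is the logarithm of the Dehn filling equation \(\rho_0(\mer_j)^{p_j} \rho_0(\lon_j)^{q_j} = 1\), which is exactly what guarantees \(\rho_0\) descends to a representation of \(\pi_1(M)\) and what lets the corresponding cusp contribution to Neumann's flattened formula disappear. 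Once these identifications are set up, the theorem follows directly from \cref{thm:gluing} together with the solid-torus computation of Section 4.
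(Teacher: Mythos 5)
Your opening sentence---reduce to \cite[Theorem 14.7]{Neumann2004} and check that \eqref{eq:boundary-log-condition-Neumann14.7} is Neumann's condition on the flattening at the filled cusps---is exactly what the paper does: its entire proof is the citation plus the observation that \eqref{eq:boundary-log-condition-Neumann14.7} matches Neumann's requirement that the log-parameter along a normal path null-homotopic in the added solid torus vanishes. Had you stopped there, the two arguments would coincide.

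The elaborated argument you then give, however, has a genuine circularity. You propose to establish the Dehn-filled case by invoking \cref{thm:gluing} together with the solid-torus computation of Section 4. But \cref{thm:gluing} is a statement about the combinatorially defined invariant of \cref{def:cvol}, which is \emph{defined} as \(\volf(L_0,\rho_0)(\mathfrak{s}_0)\) plus solid-torus corrections; the assertion that this combinatorial quantity equals the analytic Chern--Simons integral \eqref{eq:chern-simons-integral} of the filled manifold \((M,\rho)\) is precisely the content of \cref{thm:volume-neumann}, i.e.\ of Neumann's Theorem 14.7. The paper's \(\cvol\), its well-definedness (which itself leans on Neumann's triangulation results via the 3--2 move arguments), and \cref{thm:gluing} all sit downstream of this theorem, so they cannot be used to prove it. To make your route non-circular you would need an \emph{analytic} additivity statement for the Chern--Simons integral under torus gluing (in the style of Kirk--Klassen or Freed), together with an identification of the solid-torus contribution with the complex length of the core geodesic---which is essentially re-deriving Neumann's Theorem 14.7 rather than citing it. The same issue affects your first paragraph: the claim that the tetrahedral sum computes \(\cvol(\comp{L_0},\rho_0)\) for the cusped manifold is itself one of Neumann's theorems, not something available independently at this point in the paper. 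The bookkeeping items (i)--(iii) you list at the end are the right things to check and are consistent with the paper's remark about normal paths, but they should be attached to a direct appeal to \cite[Theorem 14.7]{Neumann2004}, not routed through \cref{thm:gluing}.
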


\begin{proof}
  This is \cite[Theorem 14.7]{Neumann2004}.
  In particular, \cref{eq:boundary-log-condition-Neumann14.7} is the same as the condition that ``the log-parameter along a normal path in the neighborhood of a \(0\)-simplex that represents a filled cusp is zero if the path is null-homotopic in the added solid torus''.
\end{proof}

Later we will allow a more general boundary condition than \eqref{eq:boundary-log-condition-Neumann14.7}; this will require adding a correction term that we interpret as the complex volume of the added solid torus, equivalently the complex length of the added geodesic.
When \eqref{eq:boundary-log-condition-Neumann14.7} holds the correction lies in \(2 \pi^2 i \ZZ\) so we can ignore it.
For now, we give a useful lemma about the dependence on the flattening:

\begin{lemma}
  \label{lemma:flattening-dependence}
  As a function of the flattening,
  \begin{align*}
    L(z; p^0, p^1)
    =
    L(\zeta^0, \zeta^1)
    &=
    R(\exp(\zeta^0))
    -\frac{\pi^2}{6}
    +
    \frac{2\pi i }{2}
    \left( p^1 \zeta^0 - p^0 \zeta^1 \right)
    \\
    &=
    R(\exp(\zeta^0))
    -\frac{\pi^2}{6}
    +
    \frac{1}{2}
    \left( \zeta^0 \log(1-z) + \zeta^1 \log(z) \right)
    \\
    &\equiv
    R(\exp(\zeta^0))
    -\frac{\pi^2}{6}
    +
    \frac{1}{2}
    \left( - \zeta^0 \zeta^1 + \zeta^1 \log z + 2 \pi i  p^1 \log z \right)
    \pmod{2\pi^2 i \ZZ}
  \end{align*}
  and the dependence on the flattening is given by
  \begin{equation}
    \dilr( \zeta^0 + 2\pi i k^0, \zeta^1 + 2\pi i k^1 )
    \equiv
    \dilr( \zeta^0, \zeta^1)
    +
    \frac{2\pi i}{2}\left( k^1  \zeta^0 - k^0 \zeta^1\right)
    \pmod{2\pi^2 \ZZ}.
    \qedhere
  \end{equation}
\end{lemma}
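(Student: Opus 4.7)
The plan is to verify each formula by direct algebraic substitution using the relations
\[
  \log z = \zeta^0 - 2 \pi i p^0, \qquad \log(1-z) = -\zeta^1 + 2\pi i p^1
\]
that connect the two coordinate systems on \(\Sigma\). The whole argument is bookkeeping; the main care is in tracking factors of \((2\pi i)^2 = -4\pi^2\) that create real discrepancies when passing between coordinates.

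For the first identity I would substitute the expressions for \(\log z, \log(1-z)\) into the defining formula \eqref{eq:lifted-dilog-def}, obtaining
\[
  p^0 \log(1-z) + p^1 \log z = p^0(-\zeta^1 + 2\pi i p^1) + p^1(\zeta^0 - 2\pi i p^0) = p^1 \zeta^0 - p^0 \zeta^1,
\]
with the \(p^0 p^1\) cross terms cancelling exactly. For the second identity I compute in the other direction:
\[
  \zeta^0 \log(1-z) + \zeta^1 \log z = \zeta^0(-\zeta^1 + 2\pi i p^1) + \zeta^1(\zeta^0 - 2\pi i p^0) = 2\pi i(p^1 \zeta^0 - p^0 \zeta^1),
\]
so dividing by \(2\) reproduces the first form, exactly (no moduli needed).

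For the third identity I would start from the second form and expand \(\zeta^0 \log(1-z) = -\zeta^0 \zeta^1 + 2\pi i p^1 \zeta^0\), then use \(\zeta^0 = \log z + 2\pi i p^0\) to write \(2\pi i p^1 \zeta^0 = 2\pi i p^1 \log z - 4 \pi^2 p^0 p^1\). Dividing by \(2\), the discrepancy between the second and third expressions is the real quantity \(-2\pi^2 p^0 p^1\); this is \(\equiv 0 \pmod{2\pi^2 \ZZ}\), which is the sense in which the stated congruence holds (the paper's convention has \(\dilr\) taking values in \(\CC/2\pi^2 \ZZ\), so this is the natural ambient modulus; I would flag that the ``\(i\)'' in the stated \(2\pi^2 i \ZZ\) appears to be a typo).

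For the shift identity I would use the first form: replacing \((\zeta^0, \zeta^1)\) by \((\zeta^0 + 2\pi i k^0, \zeta^1 + 2\pi i k^1)\) corresponds to \((p^0, p^1) \mapsto (p^0 + k^0, p^1 + k^1)\) while keeping \(z = \exp(\zeta^0)\) unchanged, so the \(R\) term is untouched. Expanding
\[
  (p^1 + k^1)(\zeta^0 + 2\pi i k^0) - (p^0 + k^0)(\zeta^1 + 2\pi i k^1) - (p^1 \zeta^0 - p^0 \zeta^1) = (k^1 \zeta^0 - k^0 \zeta^1) + 2\pi i(k^0 p^1 - k^1 p^0),
\]
multiplying by \(\pi i\), the last term becomes \(-2\pi^2(k^0 p^1 - k^1 p^0) \in 2\pi^2 \ZZ\), giving the claimed shift formula modulo \(2\pi^2 \ZZ\). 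The only real ``obstacle'' is being disciplined about which terms are exact and which are only congruent modulo \(2\pi^2\), since the distinction between the ambient target \(\CC/2\pi^2 \ZZ\) and equality on the nose matters for downstream applications (e.g.\ \cref{thm:volume-neumann}).
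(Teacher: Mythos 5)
Your proposal is correct and follows the same route as the paper, which likewise proves the lemma by substituting \(\log z = \zeta^0 - 2\pi i p^0\) and \(\log(1-z) = 2\pi i p^1 - \zeta^1\) into the definition of \(\dilr\) and reading off the shift identity from the resulting bilinear expression. Your careful tracking of the real cross terms \(-2\pi^2 p^0 p^1\) and \(-2\pi^2(k^0 p^1 - k^1 p^0)\), and your observation that the ambient modulus for \(\dilr\) itself is \(2\pi^2\ZZ\) rather than \(2\pi^2 i\ZZ\), are both right and consistent with the paper's definition of \(\dilr\) as a map to \(\CC/2\pi^2\ZZ\).
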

\begin{proof}
  The first relation comes from substituting \(\log(z) = 2\pi i p^0 - \zeta^0\) and \(\log(1-z) = 2\pi i p^1 - \zeta^1\) into the definition of \(\dilr\), and then the second is immediate.
\end{proof}

\subsection{Shaped tangles and the octahedral decomposition}
We can now describe how to produce ideal triangulations from link diagrams.
\begin{definition}
  Let \(L\) be a link in \(S^3\) with \(n\) components and let \(D\) be an oriented diagram of \(L\).
  Thinking of \(D\) as a decorated \(4\)-valent graph \(G\) embedded in \(S^2\), the \defemph{segments} of \(D\) are the edges%
  \note{%
    Usually these are called the ``edges'' of the diagram, but we do not want to confuse them with edges of ideal polyhedra.
  }
  of \(G\).
  A \defemph{region} of a diagram is a connected component of the complement of \(G\), equivalently a vertex of the dual graph of \(G\).
  For example, Figure \ref{fig:figure-eight-labeled} shows an (oriented) diagram with the segments labeled.
\end{definition}
\begin{marginfigure}
\begingroup%
  \makeatletter%
  \providecommand\color[2][]{%
    \errmessage{(Inkscape) Color is used for the text in Inkscape, but the package 'color.sty' is not loaded}%
    \renewcommand\color[2][]{}%
  }%
  \providecommand\transparent[1]{%
    \errmessage{(Inkscape) Transparency is used (non-zero) for the text in Inkscape, but the package 'transparent.sty' is not loaded}%
    \renewcommand\transparent[1]{}%
  }%
  \providecommand\rotatebox[2]{#2}%
  \newcommand*\fsize{\dimexpr\f@size pt\relax}%
  \newcommand*\lineheight[1]{\fontsize{\fsize}{#1\fsize}\selectfont}%
  \ifx\svgwidth\undefined%
    \setlength{\unitlength}{132.93800354bp}%
    \ifx\svgscale\undefined%
      \relax%
    \else%
      \setlength{\unitlength}{\unitlength * \real{\svgscale}}%
    \fi%
  \else%
    \setlength{\unitlength}{\svgwidth}%
  \fi%
  \global\let\svgwidth\undefined%
  \global\let\svgscale\undefined%
  \makeatother%
  \begin{picture}(1,0.73142784)%
    \lineheight{1}%
    \setlength\tabcolsep{0pt}%
    \put(0,0){\includegraphics[width=\unitlength,page=1]{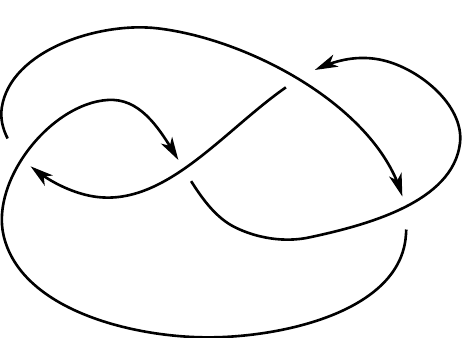}}%
    \put(0.18574584,0.53607107){\makebox(0,0)[lt]{\lineheight{1.25}\smash{\begin{tabular}[t]{l}$1$\end{tabular}}}}%
    \put(0.49378445,0.15159342){\makebox(0,0)[lt]{\lineheight{1.25}\smash{\begin{tabular}[t]{l}$2$\end{tabular}}}}%
    \put(0.86669712,0.6034901){\makebox(0,0)[lt]{\lineheight{1.25}\smash{\begin{tabular}[t]{l}$3$\end{tabular}}}}%
    \put(0.51156609,0.41261917){\makebox(0,0)[lt]{\lineheight{1.25}\smash{\begin{tabular}[t]{l}$4$\end{tabular}}}}%
    \put(0.13125042,0.24642853){\makebox(0,0)[lt]{\lineheight{1.25}\smash{\begin{tabular}[t]{l}$5$\end{tabular}}}}%
    \put(0.28308073,0.69587022){\makebox(0,0)[lt]{\lineheight{1.25}\smash{\begin{tabular}[t]{l}$6$\end{tabular}}}}%
    \put(0.82184746,0.44367152){\makebox(0,0)[lt]{\lineheight{1.25}\smash{\begin{tabular}[t]{l}$7$\end{tabular}}}}%
    \put(0.32516562,0.03348949){\makebox(0,0)[lt]{\lineheight{1.25}\smash{\begin{tabular}[t]{l}$8$\end{tabular}}}}%
  \end{picture}%
\endgroup%

  \caption{A diagram of the figure-eight knot, with the \(8\) segments indexed by \(1, \dots, 8\).}
  \label{fig:figure-eight-labeled}
\end{marginfigure}
In an oriented diagram all crossings are positive or negative, as shown in Figure \ref{fig:crossing-types}.
Our preference is to read crossings left-to-right.
\begin{figure}
  \centering
\begingroup%
  \makeatletter%
  \providecommand\color[2][]{%
    \errmessage{(Inkscape) Color is used for the text in Inkscape, but the package 'color.sty' is not loaded}%
    \renewcommand\color[2][]{}%
  }%
  \providecommand\transparent[1]{%
    \errmessage{(Inkscape) Transparency is used (non-zero) for the text in Inkscape, but the package 'transparent.sty' is not loaded}%
    \renewcommand\transparent[1]{}%
  }%
  \providecommand\rotatebox[2]{#2}%
  \newcommand*\fsize{\dimexpr\f@size pt\relax}%
  \newcommand*\lineheight[1]{\fontsize{\fsize}{#1\fsize}\selectfont}%
  \ifx\svgwidth\undefined%
    \setlength{\unitlength}{283.55544662bp}%
    \ifx\svgscale\undefined%
      \relax%
    \else%
      \setlength{\unitlength}{\unitlength * \real{\svgscale}}%
    \fi%
  \else%
    \setlength{\unitlength}{\svgwidth}%
  \fi%
  \global\let\svgwidth\undefined%
  \global\let\svgscale\undefined%
  \makeatother%
  \begin{picture}(1,0.27643014)%
    \lineheight{1}%
    \setlength\tabcolsep{0pt}%
    \put(0,0){\includegraphics[width=\unitlength,page=1]{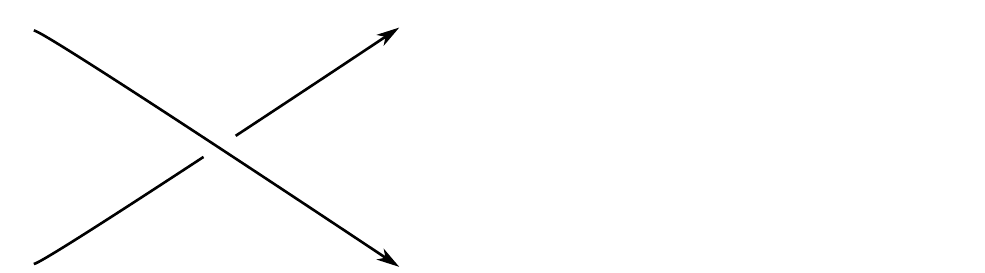}}%
    \put(-0.00351287,0.24426852){\makebox(0,0)[lt]{\lineheight{1.25}\smash{\begin{tabular}[t]{l}$1$\end{tabular}}}}%
    \put(-0.00351287,0.00621985){\makebox(0,0)[lt]{\lineheight{1.25}\smash{\begin{tabular}[t]{l}$2$\end{tabular}}}}%
    \put(0.41174984,0.24426852){\makebox(0,0)[lt]{\lineheight{1.25}\smash{\begin{tabular}[t]{l}$2'$\end{tabular}}}}%
    \put(0.41174984,0.00621985){\makebox(0,0)[lt]{\lineheight{1.25}\smash{\begin{tabular}[t]{l}$1'$\end{tabular}}}}%
    \put(0,0){\includegraphics[width=\unitlength,page=2]{crossing-types.pdf}}%
    \put(0.56563205,0.24427706){\makebox(0,0)[lt]{\lineheight{1.25}\smash{\begin{tabular}[t]{l}$1$\end{tabular}}}}%
    \put(0.56563205,0.00622839){\makebox(0,0)[lt]{\lineheight{1.25}\smash{\begin{tabular}[t]{l}$2$\end{tabular}}}}%
    \put(0.9808948,0.24427706){\makebox(0,0)[lt]{\lineheight{1.25}\smash{\begin{tabular}[t]{l}$2'$\end{tabular}}}}%
    \put(0.9808948,0.00622839){\makebox(0,0)[lt]{\lineheight{1.25}\smash{\begin{tabular}[t]{l}$1'$\end{tabular}}}}%
  \end{picture}%
\endgroup%

  \caption{Positive (left) and negative (right) crossings.}
  \label{fig:crossing-types}
\end{figure}
As shown there, we usually refer to the segments at a given crossing by \(1\), \(2\), \(1'\), and \(2'\).
We similarly refer to the regions touching the crossing as \(N\), \(S\), \(E\), and \(W\).
The labeling conventions are summarized in Figure \ref{fig:crossing-regions}.
\begin{marginfigure}
\begingroup%
  \makeatletter%
  \providecommand\color[2][]{%
    \errmessage{(Inkscape) Color is used for the text in Inkscape, but the package 'color.sty' is not loaded}%
    \renewcommand\color[2][]{}%
  }%
  \providecommand\transparent[1]{%
    \errmessage{(Inkscape) Transparency is used (non-zero) for the text in Inkscape, but the package 'transparent.sty' is not loaded}%
    \renewcommand\transparent[1]{}%
  }%
  \providecommand\rotatebox[2]{#2}%
  \newcommand*\fsize{\dimexpr\f@size pt\relax}%
  \newcommand*\lineheight[1]{\fontsize{\fsize}{#1\fsize}\selectfont}%
  \ifx\svgwidth\undefined%
    \setlength{\unitlength}{113.47111416bp}%
    \ifx\svgscale\undefined%
      \relax%
    \else%
      \setlength{\unitlength}{\unitlength * \real{\svgscale}}%
    \fi%
  \else%
    \setlength{\unitlength}{\svgwidth}%
  \fi%
  \global\let\svgwidth\undefined%
  \global\let\svgscale\undefined%
  \makeatother%
  \begin{picture}(1,0.700154)%
    \lineheight{1}%
    \setlength\tabcolsep{0pt}%
    \put(0,0){\includegraphics[width=\unitlength,page=1]{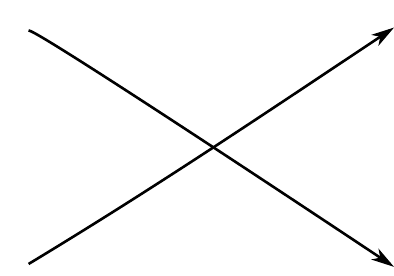}}%
    \put(-0.02199761,0.61980597){\makebox(0,0)[lt]{\lineheight{1.25}\smash{\begin{tabular}[t]{l}$1$\end{tabular}}}}%
    \put(-0.02199761,0.02494095){\makebox(0,0)[lt]{\lineheight{1.25}\smash{\begin{tabular}[t]{l}$2$\end{tabular}}}}%
    \put(1.01571146,0.61980597){\makebox(0,0)[lt]{\lineheight{1.25}\smash{\begin{tabular}[t]{l}$2'$\end{tabular}}}}%
    \put(1.01571146,0.02494095){\makebox(0,0)[lt]{\lineheight{1.25}\smash{\begin{tabular}[t]{l}$1'$\end{tabular}}}}%
    \put(0.48694249,0.45456569){\makebox(0,0)[lt]{\lineheight{1.25}\smash{\begin{tabular}[t]{l}$N$\end{tabular}}}}%
    \put(0.48694249,0.19018121){\makebox(0,0)[lt]{\lineheight{1.25}\smash{\begin{tabular}[t]{l}$S$\end{tabular}}}}%
    \put(0.68523089,0.28932538){\makebox(0,0)[lt]{\lineheight{1.25}\smash{\begin{tabular}[t]{l}$E$\end{tabular}}}}%
    \put(0.28865412,0.28932538){\makebox(0,0)[lt]{\lineheight{1.25}\smash{\begin{tabular}[t]{l}$W$\end{tabular}}}}%
  \end{picture}%
\endgroup%

  \caption{Regions near a crossing.}
  \label{fig:crossing-regions}
\end{marginfigure}

\begin{definition}
  A \defemph{shape} is a triple of nonzero complex numbers.
  We usually denote a shape by $\chi = (a, b, m) \in (\CC \setminus \{0\})^{3}$, and when it is assigned to a segment $i$ of a tangle diagram we denote it $\chi_i = (a_i, b_i, m_i)$.
\end{definition}

\begin{definition}
  \label{def:braiding}
  The \defemph{braiding} $B$ is the partially-defined map given by $B(\chi_1, \chi_2) = (\chi_{2'}, \chi_{1'})$, where
  \allowdisplaybreaks{}
  \begin{gather}
    \label{eq:a-transf-positive}
    \begin{aligned}
      a_{1'}
      &=
      a_1 A^{-1}
      \\
      a_{2'}
      &=
      a_2 A
      \\
      A &= 1 - \frac{m_1 b_1}{b_2} \left(1 - \frac{a_1}{m_1}\right)\left(1 - \frac{1}{m_2 a_2}\right)
    \end{aligned}
    \\
    \label{eq:b-transf-positive}
    \begin{aligned}
      b_{1'}
      &=
      \frac{m_2 b_2}{m_1}
      \left(
        1 - m_2 a_2 \left( 1 - \frac{b_2}{m_1 b_1} \right)
      \right)^{-1}
      \\
      b_{2'}
      &=
      b_1
      \left(
        1 - \frac{m_1}{a_1}\left( 1 - \frac{b_2}{m_1 b_1} \right)
      \right)
    \end{aligned}
    \\
    \label{eq:m-transf-positive}
    \begin{aligned}
      m_{1'}
      &= m_1
      &
      m_{2'}
      &= m_2
    \end{aligned}
  \end{gather}
  We think of $B$ as being associated to a positive crossing with incoming strands $1$ and $2$ and outgoing strands $2'$ and $1' $, as in Figure \ref{fig:crossing-regions}.
  The map $B$ is generically invertible, and if $(\chi_{2'}, \chi_{1'}) = B^{-1}(\chi_1, \chi_2)$, then
  \begin{gather}
    \label{eq:a-transf-negative}
    \begin{aligned}
      a_{1'}
      &=
      a_1 \tilde A^{-1}
      \\
      a_{2'}
      &=
      a_2 \tilde A
      \\
      \tilde A
      &=
      1 - \frac{b_2}{m_1 b_1}\left(1 - m_1 a_1 \right)\left(1 - \frac{m_2}{a_2}\right).
    \end{aligned}
    \\
    \label{eq:b-transf-negative}
    \begin{aligned}
      b_{1'}
      &=
      \frac{m_2 b_2}{m_1}
      \left(
        1 - \frac{a_2}{m_2} \left( 1 - \frac{m_1 b_1}{b_2} \right)
      \right)
      \\
      b_{2'}
      &=
      b_1
      \left(
        1 - \frac{1}{m_1 a_1} \left( 1 - \frac{m_1 b_1}{b_2} \right)
      \right)^{-1}
    \end{aligned}
    \\
    \label{eq:m-transf-negative}
    \begin{aligned}
      m_{1'}
      &= m_1
      &
      m_{2'}
      &= m_2
    \end{aligned}
  \end{gather}
\end{definition}

\begin{definition}
  \label{def:shaping}
  We say that a tangle diagram $D$ is \defemph{shaped} if its segments are assigned shapes $\{\chi_i\}$ so that at each positive crossing (labeled as in Figure \ref{fig:crossing-regions}) we have $B(\chi_1, \chi_2) = (\chi_{2'}, \chi_{1'})$, and similarly for negative crossings and $B^{-1}$.
  Part of this requirement is that all the components of \(\chi_{1'}\) and \(\chi_{2'}\) lie in \(\CC^{\times}\).
  For example, this means that at a positive crossing we must assign \(\chi_1\) and \(\chi_2\) so that
  \[
    A = 
      1 - \frac{b_2}{m_1 b_1}\left(1 - m_1 a_1 \right)\left(1 - \frac{m_2}{a_2}\right)
  \]
  is not \(0\) or \(\infty\).
\end{definition}

We now explain how these relate to hyperbolic geometry via the \defemph{octahedral decomposition}, which
 assigns an ideal triangulation to any link diagram by putting a twisted ideal octahedron at each crossing.
The gluing pattern for the faces and edges of the octahedra is determined by the combinatorics of the diagram; see \cite[Section 3]{McPhailSnyder2022} for details.
We consider the version where the octahedral are decomposed into four tetrahedra, as in \cref{fig:four-term}.

\begin{figure}
  \centering
  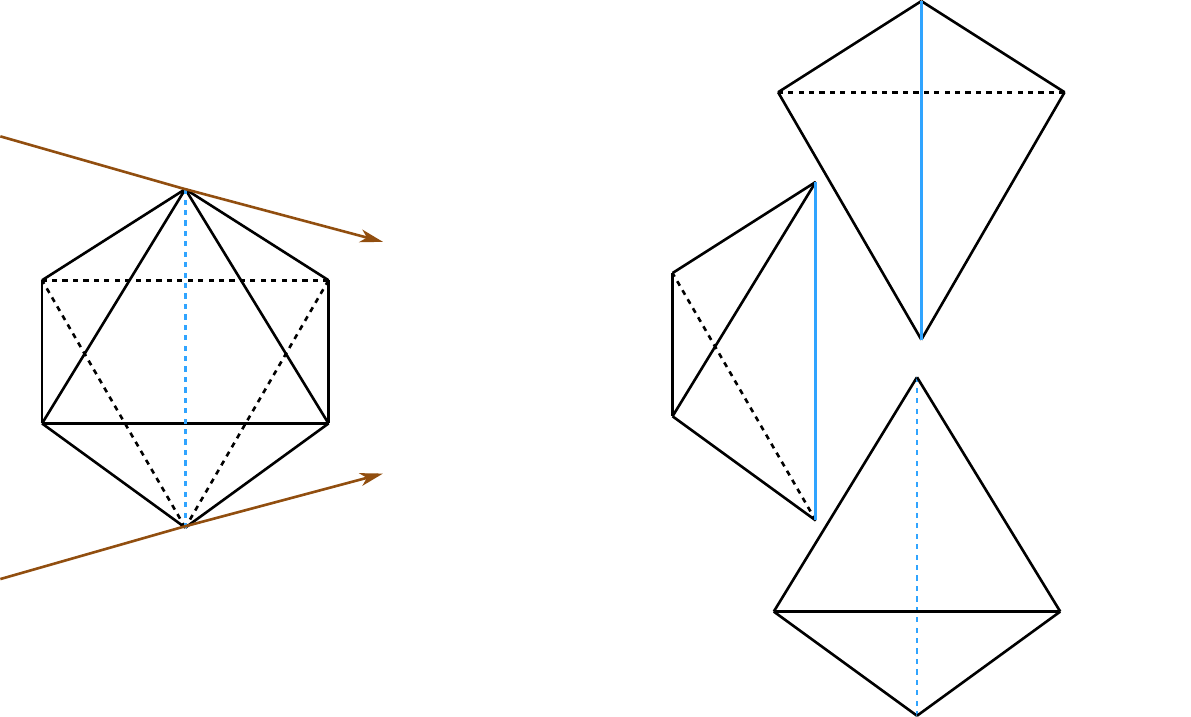
  \caption{The four-term decomposition of an ideal octahedron at a positive crossing.}
  \label{fig:four-term}
\end{figure}
\begin{table}
  \centering
  \[
    \begin{array}{c|c|ccc}
      & \text{vertices} & \text{orientation } & \text{shape } z^{0} 
      \\
      \hline
      \tau_N & P_{2}  P_{1}  P_{-}  P_{+}' & \epsilon & b_{2'}/b_1 
      \\
      \tau_W & P_{2}  P_{1}  P_{-}  P_{+} & -\epsilon & m_1 b_1/b_2
      \\
      \tau_S & P_{2}  P_{1}  P_{-}'  P_{+} & \epsilon & m_2 b_2/m_1 b_{1'}
      \\
      \tau_E & P_{2}  P_{1}  P_{-}'  P_{+}' & -\epsilon & b_{1'}/m_2 b_{2'}
      \\
    \end{array}
  \]
  \caption{Geometric data for the tetahedra at a crossing of sign \(\epsilon\).}
  \label{table:simplex-data}
\end{table}

\begin{theorem}
  \label{thm:shape-paper-results}
  Let \(L\) be a link in \(S^3\) and \(D\) an oriented diagram of \(L\).
  \begin{enumerate}
    \item  Any shaping of  \(D\) gives a well-defined \defemph{holonomy representation} \(\rho : \pi_1(S^3 \setminus L) \to \slg\).
    \item Any representation \(\rho : \pi_1(S^3\setminus L) \to \slg\) is conjugate to a representation \(\rho'\) that can be expressed as the holonomy of a shaping of \(D\).
    \item A shaping of a tangle diagram \(D\) gives a solution to the gluing equations of its octahedral decomposition via the shapes of \cref{table:simplex-data}.
      The holonomy of the induced hyperbolic structure is the representation \(\rho\).
    \item The shaping induces a decoration \(\delta\) of \(\rho\) whose homomorphism is given by
  \begin{align}
    \label{eq:meridian-eigenvalue}
    \delta(\mer_j) &= m_j
    \\
    \label{eq:longitude-eigenvalue}
    \delta(\lon_j) &= m_j^{-w_j} \prod_{k} b_k^{\eta_k}
  \end{align}
  where \(w_j\) is the writhe of component \(j\), the product is over all segments in component \(j\), and
  \[
    \eta_k
    \defeq
    \begin{cases}
      1 & \text{if segment \(k\) is over-under,}
      \\
      -1 & \text{if it is  under-over, and}
      \\
      0 & \text{otherwise.}
    \end{cases}
    \qedhere
  \]
  \end{enumerate}
\end{theorem}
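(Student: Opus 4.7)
The plan is to reduce the four parts of the theorem to the crossing-local algebraic identities that were engineered into the braiding map $B$ in the earlier paper \cite{McPhailSnyder2022}, and to state explicitly which computations need to be carried out (or cited) rather than attempting to redo that paper's work from scratch.

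For part (1), I would first pin down the ``holonomy matrices'' attached to a shape. To segment $i$ I assign the Wirtinger generator $\rho(\mer_i)\in \slg$ as a matrix in the Borel $B \subset \slg$ with diagonal entries $m_i^{\pm 1}$ and off-diagonal entry built from $a_i, b_i$ (conjugated by a fixed one-parameter subgroup determined by the choice of convention on lower- versus upper-triangular matrices). The key observation is that $B$ was defined in \cref{def:braiding} precisely so that the Wirtinger relation at a positive crossing, $\rho(\mer_{2'}) = \rho(\mer_1)\rho(\mer_2)\rho(\mer_1)^{-1}$ and $\rho(\mer_{1'}) = \rho(\mer_1)$ (and its analogue for negative crossings, which is $B^{-1}$), holds identically in the shape coordinates. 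Once this local check is in place, well-definedness of $\rho : \pi_1(\comp L)\to \slg$ is automatic from the Wirtinger presentation.

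For part (2), I would argue by counting and surjectivity. After a global conjugation one may assume no Wirtinger generator $\rho(\mer_i)$ is diagonal, so each $\rho(\mer_i)$ has a well-defined choice of eigenvalue $m_i$ (one of its two eigenvalues) and lies in a $2$-parameter family of Borel-type matrices once $m_i$ is fixed; these two extra parameters are exactly $(a_i, b_i)$. The shaping relations at crossings then hold automatically because $B$ is the unique map compatible with the Wirtinger relation in these coordinates. Part (4) is then a direct holonomy computation: $\delta(\mer_j) = m_j$ is read off the diagonal of the matrix attached to any segment of component $j$, and $\delta(\lon_j)$ is obtained by pushing a parallel copy of the component through each crossing and tracking the upper-left (or lower-right) diagonal entry of the resulting product. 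At each crossing encountered as over-under or under-over the contribution is $b_k^{\pm 1}$, and the writhe correction $m_j^{-w_j}$ absorbs the discrepancy between the blackboard framing implicit in this pushoff and the preferred longitude used to define $\delta(\lon_j)$.

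Part (3) is the geometrically substantial claim, and I expect it to be the main obstacle. The strategy is to lift the octahedron at each crossing to a configuration of six ideal vertices in $\partial \HH^3$, placed equivariantly under $\rho$: two on each strand (one above and one below the crossing) and two associated with the regions $N$ and $S$. The shape $z^0$ attached to each of $\tau_N,\tau_W,\tau_S,\tau_E$ as recorded in \cref{table:simplex-data} must be checked to equal the cross-ratio of the four ideal vertices with the prescribed vertex ordering. This is a direct but delicate calculation because the conventions on positive versus negative tetrahedra, on inverse cross-ratios versus cross-ratios, and on which segment supplies which endpoint, must all be aligned; once the four cross-ratios are verified, the edge gluing equations \eqref{eq:edge-gluing-equation} hold automatically because the ideal vertices are shared between adjacent octahedra. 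The identification of the holonomy of this hyperbolic structure with $\rho$ follows because both are defined crossing-locally by the same Wirtinger data. Throughout, my plan is to cite \cite{McPhailSnyder2022} for the computations themselves and to focus the exposition here on how the four statements fit together.
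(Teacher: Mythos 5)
Your proposal is correct and ends up in the same place as the paper: the paper's own proof is nothing more than a citation of Theorems 2.17, 2.18, 3.5--3.6, and 4.4 of \cite{McPhailSnyder2022}, and you likewise defer the actual computations to that reference. The expository sketches you add (Wirtinger relations built into \(B\), the cross-ratio check for \cref{table:simplex-data}) are a reasonable gloss on what those cited theorems establish, though your ``counting and surjectivity'' argument for part (2) glosses over the need to conjugate \(\rho\) away from the degenerate locus where the shape components or the partially defined braiding fail to be nonzero --- a point the cited Theorem 2.18 must handle and that you would need to address if you were not citing it.
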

\begin{proof}
  The claims are
  \begin{enumerate}
    \item Theorem 2.17,
    \item Theorem 2.18,
    \item Theorems 3.5 and 3.6, and
    \item Theorem 4.4
  \end{enumerate}
  of \cite{McPhailSnyder2022}.
\end{proof}

Certain choices of shaping give geometrically degenerate tetrahedra; later we will see that we can still make sense of \(\volf\) in this case.

\begin{proposition}
  \label{thm:pinched-def}
  Let \(c\) be a crossing of a shaped diagram.
  If one of the relations
  \begin{equation}
    b_{2'} = b_1, \quad
    b_{2} = m_1 b_1, \quad
    m_2 b_{2} = m_1 b_{1'}, \quad
    m_2 b_{2'} = b_{1'}, \quad
  \end{equation}
  holds then all four of them do.
  In this case we say that the crossing is \defemph{pinched}, and in this case the four tetrahedra at the crossing are geometrically degenerate.
\end{proposition}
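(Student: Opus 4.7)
The plan is to identify each of the four relations with the statement that a corresponding shape parameter $z^0$ of one of the four tetrahedra at the crossing equals $1$, using the shape assignments in Table \ref{table:simplex-data}. Explicitly, $b_{2'} = b_1$ is equivalent to $z^0_N = 1$, $b_2 = m_1 b_1$ to $z^0_W = 1$, $m_2 b_2 = m_1 b_{1'}$ to $z^0_S = 1$, and $m_2 b_{2'} = b_{1'}$ to $z^0_E = 1$. Once this identification is made, the degeneracy claim is immediate: if $z^0 = 1$ then $z^1 = 1/(1 - z^0) = \infty$ and $z^2 = 0$, so the tetrahedron has all shape parameters in $\{0, 1, \infty\}$.

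For the equivalence of the four conditions at a positive crossing, I would introduce the local parameter $t = 1 - b_2/(m_1 b_1)$ and apply the braiding formulas \eqref{eq:b-transf-positive} to compute
\[
  z^0_W = \frac{1}{1-t}, \quad z^0_N = 1 - \frac{m_1}{a_1} t, \quad z^0_S = 1 - m_2 a_2 t, \quad z^0_E = \frac{1-t}{(1 - m_1 t/a_1)(1 - m_2 a_2 t)}.
\]
The formula for $z^0_E$ is equivalent to the algebraic identity $z^0_N z^0_W z^0_S z^0_E = 1$, which corresponds to the gluing equation around the central edge $P_1 P_2$ of the octahedron and holds automatically for the braiding. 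Since Definition \ref{def:shaping} requires $a_1, a_2, m_1, m_2 \in \CC^{\times}$, each of $z^0_W = 1$, $z^0_N = 1$, and $z^0_S = 1$ is immediately equivalent to $t = 0$, that is, $b_2 = m_1 b_1$, and in that case the product formula forces $z^0_E = 1$ as well.

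The main obstacle is the reverse implication for $z^0_E = 1$: the equation $(1 - m_1 t/a_1)(1 - m_2 a_2 t) = 1 - t$ has $t = 0$ as one root, but also admits the spurious solution $t = a_1(m_2 a_2 + m_1/a_1 - 1)/(m_1 m_2 a_2)$. The resolution is that for this spurious value of $t$, a direct simplification shows that the braiding scalar
\[
  A = 1 - \frac{m_1 b_1}{b_2}\left(1 - \frac{a_1}{m_1}\right)\left(1 - \frac{1}{m_2 a_2}\right)
\]
from \eqref{eq:a-transf-positive} equals zero, which would force $a_{1'} = a_1/A = \infty \notin \CC^{\times}$. This contradicts the assumption that $D$ is shaped, so the spurious solution is excluded and $z^0_E = 1$ also forces $t = 0$. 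The negative-crossing case is analogous, substituting \eqref{eq:b-transf-negative} and \eqref{eq:a-transf-negative} throughout and verifying the same dichotomy.
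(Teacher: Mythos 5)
Your proof is correct and is essentially the detailed version of the paper's one-line argument (``a simple check using \(B\) and \(B^{-1}\)''): the four relations are exactly \(z^0_N = z^0_W = z^0_S = z^0_E = 1\) via Table \ref{table:simplex-data}, and your computations in the parameter \(t\) check out, including the identification of the spurious root of \((1 - m_1 t/a_1)(1 - m_2 a_2 t) = 1 - t\) with the locus \(A = 0\) excluded by Definition \ref{def:shaping}. That last point is a genuine subtlety the paper's proof glosses over, and you handle it correctly.
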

\begin{proof}
  Once you know what to look for this is a simple check using \(B\) and \(B^{-1}\).
\end{proof}

One advantage of using the shape coordinates for the octahedral decomposition is that they easily determine flattenings: all we have to do is pick logarithms of the shape coordinates (and some related parameters).
We abuse terminology and also call this choice of logarithms a ``flattening''.

\begin{definition}
  Let \((D,\chi)\) be a shaped diagram.
  A \defemph{flattening} \(\mathfrak f\) is a choice of logarithms of a number of variables related to the shaping \(\chi\).
  Specifically:
  \begin{itemize}
    \item Each component of \(D\) has an associated meridian eigenvalue \(m_j\).
      We choose a logarithm \(\mu_j\) with \(e^{2\pi i \mu_j} = m_j\), where for convenience we pull out a factor of \(2\pi i\).
    \item Each segment of \(D\) has a \(b\)-variable \(b_j\).
      We choose a logarithm \(\beta_j\) with \(e^{2\pi i \beta_j} = b_j\).
    \item Each region of \(D\) has a region log-parameter \(\gamma_j\) related to the \(a\)-variables as in  \cref{fig:region-variable-rule-log}.\note{
        In \cite[Section 5.2]{McPhailSnyder2022} we considered region variables \(r_j\) whose ratios give the \(a\)-variables.
        The \(\gamma_j\) are just a choices of logarithm of the \(r_j\).
      }
    \item At each crossing \(c_j\) of \(D\) with sign \(\epsilon\), consider the parameter
    \begin{equation*}
      K
      = 
      \frac{\exp(\gamma_N)}{1 - \left(b_{2'}/b_{1}\right)^{\epsilon}}
    \end{equation*}
      related to the shape of the internal edge of the octahedron at \(c_j\).
      Choose \(\kappa_i\) with \(e^{2\pi i\kappa_j} = K\).
      If the crossing \(c_j\) is pinched then \(K_j = \infty\) so this definition does not make sense and we omit the choice of \(\kappa_j\).
      We will later see we can still make sense of the complex volume of such crossings.
  \end{itemize}
  We call \((D, \chi, \mathfrak f)\) a \defemph{flattened diagram}; this implies a choice of shaping.
\end{definition}

\begin{marginfigure}
  \centering
\begingroup%
  \makeatletter%
  \providecommand\color[2][]{%
    \errmessage{(Inkscape) Color is used for the text in Inkscape, but the package 'color.sty' is not loaded}%
    \renewcommand\color[2][]{}%
  }%
  \providecommand\transparent[1]{%
    \errmessage{(Inkscape) Transparency is used (non-zero) for the text in Inkscape, but the package 'transparent.sty' is not loaded}%
    \renewcommand\transparent[1]{}%
  }%
  \providecommand\rotatebox[2]{#2}%
  \newcommand*\fsize{\dimexpr\f@size pt\relax}%
  \newcommand*\lineheight[1]{\fontsize{\fsize}{#1\fsize}\selectfont}%
  \ifx\svgwidth\undefined%
    \setlength{\unitlength}{109.92816067bp}%
    \ifx\svgscale\undefined%
      \relax%
    \else%
      \setlength{\unitlength}{\unitlength * \real{\svgscale}}%
    \fi%
  \else%
    \setlength{\unitlength}{\svgwidth}%
  \fi%
  \global\let\svgwidth\undefined%
  \global\let\svgscale\undefined%
  \makeatother%
  \begin{picture}(1,0.3133619)%
    \lineheight{1}%
    \setlength\tabcolsep{0pt}%
    \put(0,0){\includegraphics[width=\unitlength,page=1]{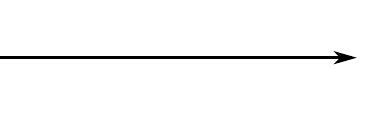}}%
    \put(0.341132,0.23042404){\makebox(0,0)[lt]{\lineheight{1.25}\smash{\begin{tabular}[t]{l}$\gamma_N$\end{tabular}}}}%
    \put(0.341132,0.02574484){\makebox(0,0)[lt]{\lineheight{1.25}\smash{\begin{tabular}[t]{l}$\gamma_S$\end{tabular}}}}%
    \put(0.95516958,0.16219765){\makebox(0,0)[lt]{\lineheight{1.25}\smash{\begin{tabular}[t]{l}$a_j$\end{tabular}}}}%
  \end{picture}%
\endgroup%

  \caption{The parameters for regions adjacent across a strand with \(a\)-variable \(a_j\) are required to satisfy \(e^{2\pi i(\gamma_{S}-\gamma_{N})} = a_j\).}
  \label{fig:region-variable-rule-log}
\end{marginfigure}

\begin{theorem}
  \label{thm:our-flattening}
  Let \((D, \chi, \mathfrak f)\) be a flattened diagram.
  At each crossing with sign \(\epsilon\), assign the tetrahedra the log-parameters
  \begin{align*}
    \zeta_N^0
    &=
    2\pi i \epsilon (\beta_{2'} - \beta_1)
    &
    \zeta_N^1
    &=
    2\pi i( \kappa - \gamma_N)
    \\
    \zeta_W^0
    &=
    2\pi i \epsilon(\beta_{2} - \beta_{1} - \mu_1 )
    &
    \zeta_W^1
    &=
    2\pi i( \kappa - \gamma_W + \epsilon \mu_1)
    \\
    \zeta_S^0
    &=
    2\pi i \epsilon(\beta_{2} - \beta_{1'} + \mu_2 - \mu_1 )
    &
    \zeta_S^1
    &=
    2\pi i (\kappa - \gamma_S + \epsilon (\mu_1 - \mu_2))
    \\
    \zeta_E^0
    &=
    2\pi i \epsilon(\beta_{2'} - \beta_{1'} + \mu_2 )
    &
    \zeta_E^1
    &=
    2\pi i( \kappa - \gamma_E - \epsilon \mu_2)
  \end{align*}
  This gives a flattening of the four-term octahedral decomposition associated to \(D\).
\end{theorem}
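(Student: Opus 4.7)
The plan is to verify the two defining conditions of a flattening for the four-term octahedral decomposition: at each tetrahedron the exponential relations $\exp(\epsilon\zeta^0) = z^0$, $\exp(\epsilon\zeta^1) = z^1$, $\exp(\epsilon\zeta^2) = -z^2$ with $\sum_k \zeta^k = 0$, and at each edge of the triangulation the log gluing equation $\sum_j \zeta_j^{\nu(j)} = 0$.

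First I would check the tetrahedron-level conditions by direct inspection. For each of $\tau_N, \tau_W, \tau_S, \tau_E$, the formula for $\zeta^0$ is visibly a logarithm of the shape $z^0$ listed in Table \ref{table:simplex-data}: for $\tau_N$ (sign $\epsilon$), one has $\exp(\epsilon\zeta_N^0) = \exp(2\pi i(\beta_{2'}-\beta_1)) = b_{2'}/b_1 = z_N^0$; for $\tau_W$ (sign $-\epsilon$), one has $\exp(-\epsilon\zeta_W^0) = \exp(-2\pi i(\beta_2-\beta_1-\mu_1)) = m_1 b_1/b_2 = z_W^0$; and similarly for $\tau_S$ and $\tau_E$. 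For $\zeta^1$, the parameter $\kappa$ was introduced precisely so that $e^{2\pi i\kappa} = e^{2\pi i\gamma_N}/(1-(b_{2'}/b_1)^\epsilon)$, which after the region corrections $\gamma_W, \gamma_S, \gamma_E$ related to $\gamma_N$ by Figure \ref{fig:region-variable-rule-log} yields the correct shape $z^1$ at each of the four tetrahedra. The coordinate $\zeta^2$ is then forced by $\sum_k \zeta^k = 0$, and the identity $\exp(\epsilon\zeta^2) = -z^2$ follows from the elementary identity $z^0 z^1 z^2 = -1$ satisfied by the shape parameters of any ideal tetrahedron.

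Next I would verify the edge equations. The edges of the four-term decomposition split into three families: the axis edges $P_1 P_2$ running through each octahedron, the internal edges of the subdivision connecting $P_1, P_2$ to the equatorial vertices $P_{\pm}, P_{\pm}'$, and the equatorial edges identified across adjacent octahedra according to the combinatorics of the diagram (some running along strands, some lying over regions). For each such edge, the sum $\sum_j \zeta_j^{\nu(j)}$ is a $\ZZ$-linear combination of the parameters $\mu, \beta, \gamma, \kappa$. Exponentiating reduces each to the corresponding Neumann--Zagier gluing equation of the octahedral decomposition, which holds by Theorem \ref{thm:shape-paper-results}(3). Hence every sum lies in $2\pi i\ZZ$, and one can coherently fix the integer parts of the logarithm choices --- which the definitions $e^{2\pi i\mu_j} = m_j$, $e^{2\pi i\beta_j} = b_j$, $e^{2\pi i \kappa_j} = K$, and $e^{2\pi i(\gamma_S-\gamma_N)} = a_j$ all leave free --- so that every edge sum is exactly zero.

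The main obstacle is the bookkeeping around an internal edge at a single crossing, where all four $\kappa_c$ contributions appear together with the four region log-parameters $\gamma_N, \gamma_W, \gamma_S, \gamma_E$ and the meridian parameters $\mu_1, \mu_2$. The cleanest route is to expand each of $\zeta_N^1, \zeta_W^1, \zeta_S^1, \zeta_E^1$ in the basis of log-parameters and observe that the four $\kappa_c$ terms combine with the exponential identity relating the $a$-variables of the four adjacent regions (coming from Figure \ref{fig:region-variable-rule-log}), so that the local edge sum vanishes. The remaining edge types are simpler: along each strand the $\beta$'s telescope between consecutive crossings, and around each region the $\gamma$'s cancel around its boundary. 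Once these checks are assembled the formulas indeed define a flattening of the four-term octahedral decomposition.
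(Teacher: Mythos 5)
Your verification of the tetrahedron-level conditions follows the same route as the paper: \(\exp(\epsilon_\tau\zeta^0)=z^0\) by inspection, \(\exp(\epsilon_\tau\zeta^1)=z^1\) from the definition of \(\kappa\) together with the shape identities, and \(\zeta^2\) forced by \(\sum_k\zeta^k=0\) with \(z^0z^1z^2=-1\). The genuine gap is in your treatment of the edge equations. You argue only that each edge sum \(\sum_j\zeta_j^{\nu(j)}\) exponentiates to a gluing equation, hence lies in \(2\pi i\ZZ\), and then you propose to ``coherently fix the integer parts'' of the logarithms so that the sums become exactly zero. That is not the statement being proved: \cref{thm:our-flattening} asserts that the formulas give a flattening for \emph{every} flattened diagram \((D,\chi,\mathfrak f)\), i.e.\ for every choice of the logarithms \(\mu_j,\beta_j,\gamma_j,\kappa_j\) subject only to the exponential constraints. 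This universality is essential downstream --- \cref{lemma:partition-function-flattening-dependence} and \cref{thm:diagram-flattening-dependence} shift \(\beta_j\mapsto\beta_j+k\) and \(\mu_j\mapsto\mu_j+k\) freely and need the result to remain a flattening --- so a proof that only produces \emph{some} admissible integer parts proves a weaker theorem. Moreover, ``one can coherently fix the integer parts'' is itself exactly the nontrivial combinatorial existence problem that Neumann solves in \cite[Section 9]{Neumann2004}; asserting it without proof begs the question that the Ptolemy-type coordinates are designed to bypass. What actually has to be shown (and what the paper's proof does, following the procedure of \cite[Theorem 3.2]{McPhailSnyder2022}) is that each edge sum is an \emph{identically vanishing} combination of the free log-parameters: the \(\gamma\)'s and \(\kappa\)'s cancel formally around each edge, and the residual combination of \(\beta\)'s and \(\mu\)'s is the exact logarithm of a multiplicative identity of the shape variables (the logarithmic \(m\)-hyperbolicity equations), leaving no \(2\pi i\ZZ\) ambiguity to be fixed.

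A secondary problem: your reduction ``exponentiating reduces each edge sum to the corresponding Neumann--Zagier gluing equation'' is not immediate with the paper's sign conventions. Since \(\exp(\epsilon_j\zeta_j^{\nu(j)})=\pm z_j^{\nu(j)}\) with \(\epsilon_j=\pm1\) the sign of the \(j\)th tetrahedron, \(\exp\bigl(\sum_j\zeta_j^{\nu(j)}\bigr)\) equals \(\prod_j\bigl(z_j^{\nu(j)}\bigr)^{\epsilon_j}\) up to signs, whereas \cref{eq:edge-gluing-equation} is the unsigned product \(\prod_j z_j^{\nu(j)}=1\). These differ whenever tetrahedra of both signs meet an edge, as they always do in the four-term decomposition, so even the mod-\(2\pi i\ZZ\) claim requires tracking the signs \(\epsilon_j\) explicitly rather than a direct appeal to \cref{thm:shape-paper-results}.
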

\begin{proof}
  It is obvious that \(e^{\zeta_N^0} = z_N^0\), and similarly for the other tetrahedra.
  Checking that \(e^{\zeta_N^1} = z_N^1\) is elementary given the right identities \cite[Lemma 2.10]{McPhailSnyder2022} on the shapes.
  Finally, we can check \cref{eq:flattening-gluing-condition} by following the same procedure as in the proof of \cite[Theorem 3.2]{McPhailSnyder2022}:
  it is obvious that it holds for the vertical edges in each region, and the horizontal edges follow from a logarithmic version of the \(m\)-hyperbolicity equations.
\end{proof}

In general a choice of flattening is a lot of data.
However, it turns out that all that matters for the value of \(\volf\) is the log-decoration induced by it.

\begin{definition}
  \label{def:induced-boundary-flattening}
  Let \((D, \chi, \mathfrak{f})\) be a flattened, shaped diagram of a link \(L\).
  For each component \(L_j\) of \(L\), set
  \begin{equation}
    \mathfrak{s}(\mer_j) = \mu_j
  \end{equation}
  where \(\mu_j\) is the log-meridian of any segment of \(L_j\), and
  \begin{equation}
    \label{eq:log-longitude-sum}
    \mathfrak{s}(\lon_j) = -w_j \mu_j +  \sum_{k} (-1)^{\eta_k} \beta_k
  \end{equation}
  where \(w_j\) is the writhe of component \(j\), the sum is over all segments in component \(j\), and
  \[
    \eta_k
    \defeq
    \begin{cases}
      1 & \text{if segment \(k\) is over-under,}
      \\
      -1 & \text{if it is  under-over, and}
      \\
      0 & \text{otherwise.}
    \end{cases}
  \]
  By part 4 of \cref{thm:shape-paper-results} \(\mathfrak{s}\) is a log-decoration, and we call it the log-decoration \defemph{induced} by \(\mathfrak{f}\).
\end{definition}

\begin{remark}
  \label{rem:2-categories}
  Shaped tangle diagrams form a monoidal category in the usual way: we can compose tangles (in our conventions, horizontally) when they have matching boundary points, including orientations and shapes.
  There is also a tensor product given by disjoint union, which in our conventions is vertical composition.
  
  Flattened tangle diagrams have additional parameters on the regions, so they form a \(2\)-category: we can only vertically or horizontally compose diagrams when their region log-parameters match on the identified regions.
  We refer to \cite[Section 2.2]{Lauda2012} for more on the graphical calculus of \(2\)-categories.
  We mostly focus on the behaviour of \(\volf\) on link diagrams, so we do not emphasize the \(2\)-category structure, but it would be interesting to explore it further.
\end{remark}

\section{The complex volume of a link exterior}
In this section, given
\begin{itemize}
  \item an oriented link  \(L\) in  \(S^3\),
  \item a  representation \(\rho : \pi_1(S^3 \setminus L) \to \slg\), and
  \item a log-decoration \(\mathfrak{s} \in \logrepv(\extr L, \rho)\) compatible with both of these,
\end{itemize}
we define a complex number
\[
  \volf(L, \rho)(\mathfrak{s}) \in \CC/2\pi^2 i \ZZ
\]
which we think of as the complex volume of the link exterior \(\extr L\).
Given a diagram \(D\) of \(L\), a shaping \(\chi\) of \(D\) corresponding to \(\rho\), and a flattening \(\mathfrak{f}\) corresponding to \(\mathfrak{s}\), \(\volf\) is defined by
\[
  \volf(L, \rho)(\mathfrak{s}) = \volf(D, \chi)(\mathfrak{f}) \defeq \sum_{c} \volf(c, \mathfrak{f})
\]
where the sum is over crossings \(c\) of \(D\) and the volume \(\volf(c, \mathfrak{f})\) of a crossing is the sum
\[
  \volf(c, \mathfrak{f})
  =
  -i\epsilon \left[
    \dilr(\zeta_N^0, \zeta_N^1)
    -\dilr(\zeta_W^0, \zeta_W^1)
    +\dilr(\zeta_S^0, \zeta_S^1)
    -\dilr(\zeta_E^0, \zeta_E^1)
  \right]
\]
of the volumes of the tetrahedra at that crossing computed using the log-parameters  \(\zeta_j^k\) determined by the flattening \(\mathfrak{f}\).

We then show that
\begin{itemize}
  \item \(\volf(D, \chi)(\mathfrak{f})\) depends only on the boundary \(\log\)-structure \(\mathfrak{s}\) determined by \(\mathfrak{f}\), but not the rest of \(\mathfrak{f}\), and
  \item \(\volf(D, \chi)(\mathfrak{f})\) is invariant under Reidemeister moves.
\end{itemize}
These establish that \(\volf\) really is an invariant of \((L, \rho, \mathfrak{s})\) and not superficial choices like the underlying diagram or the choice of flattening.
Once this is done, we show that
\begin{itemize}
  \item \(\volf(L, \rho)(\mathfrak{s})\) does not depend on the chosen orientation of \(L\), and
  \item \(\volf(L, \rho)(\mathfrak{s}) = \volf(L, \rho')(\mathfrak{s})\) whenever \(\rho\) and \(\rho'\) are conjugate.
\end{itemize}

Our perspective on \(\volf\) comes from quantum invariants of links, in particular the holonomy invariants of \cite{Blanchet2018,McPhailSnyderThesis}.
We will see that \(\volf(T, \rho, \mathfrak{s})\) also makes sense for tangles \(T\), not just links, and that it obeys cut-and-paste decomposition rules analogous to those for quantum link invariants.
The proof that \(\volf\) is conjugation-invariant uses the formalism of \cite{Blanchet2018}, which was originally motivated by quantum holonomy invariants.

\subsection{Defining \texorpdfstring{\(\volf\)}{𝒱}}

\begin{definition}
  \label{def:crossing-volume}
  Let \((D, \chi, \mathfrak{f})\) be a flattened link diagram, and let \(c\) be a crossing of \(D\) of sign \(\epsilon\).
  If \(c\) is not pinched, the \defemph{volume} of \(c\) is
  \begin{equation}
    \label{eq:crossing-volume}
    \volf(c, \mathfrak f) \defeq
    -i\epsilon \left[
      \dilr(\zeta_N^0, \zeta_N^1)
      -\dilr(\zeta_W^0, \zeta_W^1)
      +\dilr(\zeta_S^0, \zeta_S^1)
      -\dilr(\zeta_E^0, \zeta_E^1)
    \right]
  \end{equation}
  where the \(\zeta_j^k\) are computed using the log-parameters of \(\mathfrak{f}\).
  If \(c\) is pinched, we instead define
  \begin{equation}
    \label{eq:crossing-volume-pinched}
    \volf(c, \mathfrak{f})
    \defeq
    2 \pi^2 i
    \left[
      \begin{aligned}
        &\phantom{+}\beta_1 (\gamma_W - \gamma_N) - \beta_{1'}(\gamma_S - \gamma_E)
        \\
        &+\beta_2 (\gamma_S - \gamma_W) - \beta_{2'}(\gamma_E - \gamma_N)
        \\
        &- \mu_1(\epsilon (\beta_1 - \beta_{1'} + \mu_2) + \gamma_S - \gamma_W)
        \\
        &- \mu_2(\epsilon (\beta_{2'} - \beta_{2} + \mu_1) + \gamma_E - \gamma_S)
      \end{aligned}
    \right]
  \end{equation}
  As shown in \cref{lemma:volf-continuous-pinched}, \eqref{eq:crossing-volume-pinched} is obtained by taking the limit of \eqref{eq:crossing-volume} as \(\exp(\zeta_k^0) \to 1\) in the appropriate way.

  The \defemph{volume} of the diagram \(D\) is the sum of the volumes of the crossings:
  \begin{equation}
    \label{eq:diagram-volume}
    \volf(D, \chi, \mathfrak{f})
    \defeq 
    \sum_{c} \volf(c, \mathfrak{f}).
  \end{equation}
  In each case we think of \(\volf\) as taking values in \(\CC/2\pi^2 i \ZZ\).
\end{definition}

Our first result is that \(\volf\) depends only on simpler data than the flattening:
\begin{theorem}
  \label{thm:diagram-flattening-dependence}
  Let \((D,\chi)\)  be a shaped link diagram of a link \((L, \rho)\).
  Then \(\volf(D, \chi, \mathfrak{f})\) depends only on the log-decoration induced by \(\mathfrak{f}\), so we can think of it as a function
  \[
    \volf(D, \chi) : \logrepv(L, \rho) \to \CC/2\pi^2 i \ZZ
  \]
  on the space of log-decorations of \((L, \rho)\).
  Furthermore, the dependence on the log-decoration is given by
  \begin{equation}
    \label{eq:cvol-s-dependence}
    \volf(D, \chi)(\mathfrak{s}')
    -
    \volf(D, \chi)(\mathfrak{s})
    \equiv
    4\pi^2i
    \sum_{j}
    \Delta \lambda_j
    \mathfrak{s}(\mer_j)
    -
    \Delta \mu_j
    \mathfrak{s}(\lon_j)
    \pmod{2\pi^2 i \ZZ}
  \end{equation}
  where the sum is over connected components of \(D\) (that is, over boundary components of \(\extr L\)) and
  \[
    \Delta \lambda_j
    =
    \mathfrak{s}'(\lon_j) - \mathfrak{s}(\lon_j)
    , \quad
    \Delta \mu_j
    =
    \mathfrak{s}'(\mer_j) - \mathfrak{s}(\mer_j).\qedhere
  \]
\end{theorem}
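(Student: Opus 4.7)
The plan is to compute the variation $\delta \volf(D, \chi, \mathfrak{f})$ under integer shifts of each of the four types of flattening parameters: $\kappa_c$, $\gamma_r$, $\beta_k$, and $\mu_j$. By \cref{lemma:flattening-dependence}, shifting $(\zeta^0, \zeta^1)$ of a tetrahedron by $(2\pi i k^0, 2\pi i k^1)$ changes $\dilr$ by $\pi i (k^1 \zeta^0 - k^0 \zeta^1) \pmod{2\pi^2 \ZZ}$. Multiplying by $-i\epsilon \operatorname{sign}_X$ and summing over the four tetrahedra at each crossing as in \cref{eq:crossing-volume}, each shift produces a quadratic variation in the parameters of \cref{thm:our-flattening}. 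I treat the four shift types in turn.

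Shifts of $\kappa_c$ and $\gamma_r$ preserve $\mathfrak{s}$ and both contribute zero: for $\delta \kappa_c$, the variation at crossing $c$ reduces to a multiple of $\zeta_N^0 - \zeta_W^0 + \zeta_S^0 - \zeta_E^0$, which vanishes identically from the formulas of \cref{thm:our-flattening}; for $\delta \gamma_r$, the variation is a sum over crossings $c \in \partial r$ of terms proportional to $\zeta_{X_c}^0$, which telescopes to zero under the identification of $\gamma$'s of regions shared between neighbouring crossings along $\partial r$.

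Shifts of $\beta_k$ and $\mu_j$ generally change $\mathfrak{s}$. For $\delta \beta_k = 1$, the local variation at each of the two endpoint crossings of $k$ is an explicit linear combination of $\zeta^1$'s; combining the two endpoints and using the identification $\gamma_X(c_a) = \gamma_{X'}(c_b)$ for regions shared across $k$, the $\gamma$- and $\kappa$-dependent terms cancel, and what remains is $4\pi^2 i \eta_k \mathfrak{s}(\mer_j)$, where $j$ is the component containing $k$ and $\eta_k \in \{+1, -1, 0\}$ is the over/under signature of the segment's endpoints. For $\delta \mu_j = 1$, the variation is distributed over all crossings touching strand $j$; after the same telescoping of $\gamma$-differences the remaining $\beta$- and $\mu$-contributions collect into $-4\pi^2 i \sum_{k \in j} \eta_k \beta_k$, which by the definition of $\mathfrak{s}(\lon_j)$ equals $-4\pi^2 i(\mathfrak{s}(\lon_j) + w_j \mathfrak{s}(\mer_j))$, matching the formula with $\Delta \mu_j = 1$ and $\Delta \lambda_j = -w_j$.

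The main obstacle is the local case analysis for the $\beta_k$ and $\mu_j$ shifts: for each of the four positions a segment can occupy at a crossing, one must track how $\zeta^0, \zeta^1$ depend on the relevant flattening parameters, identify the regions shared across the segment with its neighbours, and verify that the $\gamma$-contributions telescope while the $\mu$-coefficients produce exactly $-2\eta_k$ after summing over the two endpoints (with the sign analysis depending on the convention that strand $1$ is over at positive crossings and under at negative crossings). This rests on the logarithmic $m$-hyperbolicity identities used in the proof of \cref{thm:our-flattening}. Combining the contributions of all four shift types then yields \cref{eq:cvol-s-dependence}, from which both parts of the theorem follow.
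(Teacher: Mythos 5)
Your proposal is correct and follows essentially the same route as the paper: the per-crossing variation table you derive from \cref{lemma:flattening-dependence} is exactly the paper's \cref{lemma:partition-function-flattening-dependence}, and the vanishing of the $\kappa$ and (telescoping) $\gamma$ contributions, the two-endpoint analysis for $\beta_k$, and the component-wise sum for $\mu_j$ all match the paper's argument. The only point to add is that your computation formally covers only non-pinched crossings, since \cref{lemma:flattening-dependence} concerns $\dilr$; one should note that the pinched formula \eqref{eq:crossing-volume-pinched} was arranged to have the same flattening dependence (equivalently, invoke the continuity statement of \cref{lemma:volf-continuous-pinched}).
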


\begin{remark}
  This result is false for tangle diagrams: in addition to the induced log-decoration \(\mathfrak{s}\), the volume of a tangle diagram can also depend on the region log-parameters \(\gamma_j\) assigned to open regions.
  In a link diagram, there are no open regions, so this does not come up.

  Geometrically interpreting the dependence on the region parameters could lead to a better understanding of the \(2\)-category structure on flattened tangles discussed in \cref{rem:2-categories}.
\end{remark}

\begin{lemma}
  \label{lemma:kappa-independent}
  The volume \(\volf(c, \mathfrak{f})\) at a  crossing does not depend on the value of \(\kappa\).
\end{lemma}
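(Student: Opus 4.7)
The proof is essentially a direct bookkeeping argument using \cref{lemma:flattening-dependence}. In the pinched case the formula \eqref{eq:crossing-volume-pinched} does not mention $\kappa$ at all, so there is nothing to check. I focus on the non-pinched case.

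The constraint $\exp(2\pi i \kappa) = K$ determines $\kappa$ up to the addition of an integer $k \in \ZZ$. Inspecting the formulas in \cref{thm:our-flattening}, I observe that $\kappa$ appears only in the $\zeta^1$-parameters, and it appears in all four of them with the same coefficient $2\pi i$. Hence replacing $\kappa$ by $\kappa + k$ sends
\[
  (\zeta_j^0, \zeta_j^1) \longmapsto (\zeta_j^0, \zeta_j^1 + 2 \pi i k)
  \qquad j \in \{N, W, S, E\}.
\]

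By \cref{lemma:flattening-dependence} applied with $k^0 = 0$ and $k^1 = k$, each $\dilr$ changes by exactly $\pi i k \, \zeta_j^0$ modulo $2\pi^2 \ZZ$. Therefore the change in $\volf(c, \mathfrak{f})$ is
\[
  -i \epsilon \cdot \pi i k \bigl[ \zeta_N^0 - \zeta_W^0 + \zeta_S^0 - \zeta_E^0 \bigr].
\]
Substituting the expressions from \cref{thm:our-flattening}, the alternating sum in the brackets equals
\[
  2\pi i \epsilon \bigl[(\beta_{2'} - \beta_1) - (\beta_2 - \beta_1 - \mu_1) + (\beta_2 - \beta_{1'} + \mu_2 - \mu_1) - (\beta_{2'} - \beta_{1'} + \mu_2)\bigr],
\]
and a direct cancellation shows this vanishes identically. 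Hence $\volf(c, \mathfrak{f})$ is unchanged.

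There is no real obstacle here — the point is simply that $\kappa$ is a ``global'' parameter attached to the internal edge of the octahedron, common to all four tetrahedra, and the alternating sum structure of \eqref{eq:crossing-volume} conspires with the linear $\zeta^0$-dependence in \cref{lemma:flattening-dependence} to kill its contribution. The only thing to verify is the algebraic identity $\zeta_N^0 - \zeta_W^0 + \zeta_S^0 - \zeta_E^0 = 0$, which is immediate.
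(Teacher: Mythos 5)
Your proof is correct and follows the same route as the paper: the pinched case is vacuous, and in the non-pinched case the shift $\kappa \mapsto \kappa + k$ moves only the $\zeta_j^1$'s, so \cref{lemma:flattening-dependence} reduces everything to the alternating sum $\zeta_N^0 - \zeta_W^0 + \zeta_S^0 - \zeta_E^0 = 0$. Your explicit verification of that identity from \cref{thm:our-flattening} is a small bonus the paper leaves implicit.
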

\begin{proof}
  Since \(\kappa\) does not appear in \cref{eq:crossing-volume-pinched} we only need to consider the case that \(c\) is not pinched.
  Suppose we use a different value \(\kappa'\), which has \(\kappa' = \kappa + k\) for some \(k \in \ZZ\).
  Because \(\kappa\) appears only in the \(\zeta_j^1\) parameters, not the \(\zeta_j^0\), by \cref{lemma:flattening-dependence} we have
  \[
    \volf(c, \mathfrak{f}')
    -
    \volf(c, \mathfrak{f}')
    \equiv
    \frac{1}{2} 2\pi i k \epsilon \left( \zeta_N^0 - \zeta_W^0 + \zeta_S^0 - \zeta_E^0 \right)
    = 0
    .
    \qedhere
  \]
\end{proof}
\begin{remark}
  Because \(\volf\) never depends on the choice of logarithms \(\kappa\) we will drop them from our discussion of flattenings going forward.
  We still need to use them to compute \(\volf\), but we can always set  \(\kappa = \log K\), so there's no point mentioning this every time.
  Another reason is to make it simpler to discuss pinched crossings: at these crossings \(\kappa_j = \log 0 + 2\pi i k\) is not well-defined.
  The next lemma says this is not an issue.
\end{remark}

\begin{lemma}
  \label{lemma:volf-continuous-pinched}
  For any diagram \(D\), let \(\mathfrak{F}(D)\) be the space of flattenings \(\mathfrak{f}\) of \(D\) in which we set \(\kappa = \log K\) at every non-pinched crossing.
  Then \(\volf\) is a continuous function on \(\mathfrak{F}(D)\) with values in \(\CC / 2\pi^2 i \ZZ\).
\end{lemma}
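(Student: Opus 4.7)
The plan is to reduce continuity to a limit computation at pinched crossings. Away from the pinched locus, substituting $\kappa = \log K$ into the formulas of \cref{thm:our-flattening} makes every $\zeta_j^k$ a continuous function of the $(\mu, \beta, \gamma)$ data, and the resulting pairs $(\zeta_j^0, \zeta_j^1)$ land in the Riemann surface $\Sigma$; continuity of $\dilr : \Sigma \to \CC/2\pi^2 i \ZZ$ therefore gives continuity of $\volf$ on this open subset of $\mathfrak{F}(D)$. What remains is to show, for any continuous path $\mathfrak{f}_t \to \mathfrak{f}_0$ in $\mathfrak{F}(D)$ with $\mathfrak{f}_0$ pinched at a crossing $c$, that $\volf(c, \mathfrak{f}_t)$ computed by \eqref{eq:crossing-volume} converges modulo $2\pi^2 i \ZZ$ to \eqref{eq:crossing-volume-pinched} evaluated at $\mathfrak{f}_0$.

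The computation uses the reformulation from \cref{lemma:flattening-dependence},
\[
  \dilr(\zeta^0, \zeta^1) \equiv R(e^{\zeta^0}) - \frac{\pi^2}{6} + \pi i \left( p^1 \zeta^0 - p^0 \zeta^1 \right) \pmod{2\pi^2 i \ZZ},
\]
which exposes the degenerate behavior near shape $1$. By \cref{thm:pinched-def}, at the pinched limit all four tetrahedra at $c$ simultaneously have shape $z_j^0 = 1$, so $R(e^{\zeta_j^0}) \to R(1) = \pi^2/6$ and the $R - \pi^2/6$ pieces vanish. The remaining quadratic-in-log terms $\pi i (p_j^1 \zeta_j^0 - p_j^0 \zeta_j^1)$ appear to blow up, because each $\zeta_j^1$ contains the divergent term $2\pi i \kappa$ common to all four tetrahedra. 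However, a direct calculation from \cref{thm:our-flattening} shows that the alternating sum of coefficients $p_N^0 - p_W^0 + p_S^0 - p_E^0$ vanishes identically once the pinching conditions of \cref{thm:pinched-def} hold, so the $\kappa$-divergences cancel and the limit is finite.

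The main obstacle is the final algebraic check that the surviving finite part matches \eqref{eq:crossing-volume-pinched}. After the cancellation above, one substitutes the explicit formulas for $\zeta_j^0, \zeta_j^1$ from \cref{thm:our-flattening} into the alternating sum $-i\epsilon(\dilr_N - \dilr_W + \dilr_S - \dilr_E)$ and collects the surviving finite terms. These arise from the $\gamma_j$-parts of $\zeta_j^1$ and the meridian corrections in both $\zeta_j^0$ and $\zeta_j^1$, and they should reorganize into exactly the four lines of \eqref{eq:crossing-volume-pinched}, with each line coming from one of the four tetrahedra. The calculation is elementary but requires careful bookkeeping; the modular reduction by $2\pi^2 i \ZZ$ absorbs any ambiguity in the integer logarithm choices along the path, so the limit depends only on $\mathfrak{f}_0$ and the lemma follows.
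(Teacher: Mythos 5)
Your proof is correct and follows essentially the same route as the paper: reduce to the limit at a pinched crossing, observe that \(R(z)-\pi^2/6\to 0\) as the shapes tend to \(1\), and cancel the logarithmic divergence in the \(\zeta_j^1\)'s against the identically vanishing alternating sum \(\zeta_N^0-\zeta_W^0+\zeta_S^0-\zeta_E^0\) (the same mechanism as \cref{lemma:kappa-independent}), leaving a finite quadratic remainder equal to \eqref{eq:crossing-volume-pinched}. The only cosmetic difference is that you work with the \(p^1\zeta^0-p^0\zeta^1\) form of \cref{lemma:flattening-dependence} where the paper uses the \(-\zeta^0\zeta^1/2\) form, and your parenthetical that each line of \eqref{eq:crossing-volume-pinched} comes from a single tetrahedron is not quite how that formula is organized, but neither affects the argument.
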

\begin{proof}
  This is obvious when all the crossings are pinched, so it suffices to check that \(\volf(c, \mathfrak{f})\) is continuous in the limit where \(c\) becomes pinched.
  Each argument \((\zeta_j^0)^{\epsilon_j}\) goes to \(1\) in this limit, so \(R((\zeta_j^0)^{\epsilon_j})\) goes to \(0\) as well.
  We only need to worry about the correction terms, and the \(\log (z_j^0)^\epsilon\) also go to  \(0\), so all that remains is \(-\zeta^0 \zeta^1/2\).
  Adding them up gives \cref{eq:crossing-volume-pinched}.
\end{proof}

\begin{lemma}
  \label{lemma:partition-function-flattening-dependence}
  The volume \(\volf(c, \mathfrak{f})\) at a crossing \(c\) depends on the remaining components of the flattening \(\mathfrak{f}\) via \(2\pi^2 i\) times
  \begin{gather*}
    \mu_1:
    -\epsilon(\beta_{1'} - \beta_{1}) + \gamma_W - \gamma_S
    \\
    \mu_2:
    \epsilon(\beta_{2'} - \beta_{2}) + \gamma_S - \gamma_E
    \\
    \beta_1:
    \gamma_W - \gamma_N - \epsilon \mu_1
    \\
    \beta_2:
    \gamma_S - \gamma_W + \epsilon \mu_2
    \\
    \beta_{1'}:
    \gamma_E - \gamma_S + \epsilon \mu_1
    \\
    \beta_{2'}:
    \gamma_N - \gamma_E - \epsilon \mu_2
    \\
    \gamma_{N}:
    \beta_1 - \beta_{2'}
    \\
    \gamma_{W}:
    \beta_2 - \beta_1 - \mu_1
    \\
    \gamma_{S}:
    \beta_{1'} + \mu_1 - \beta_2 - \mu_2
    \\
    \gamma_{E}:
    \beta_{2'} + \mu_2 - \beta_{1'}
  \end{gather*}
  For example, this means that if \(\mathfrak f'\) agrees with \(\mathfrak f\) except that \(\mu_1\) is replaced by \(\mu_1 + k\) for \(k \in \ZZ\), then 
  \[
    \volf(c, \mathfrak f') = 
    2\pi^{2} i
    \left[ \epsilon(\beta_1 - \beta_{1'}) + \gamma_W - \gamma_S \right]
    +
    \volf(c, \mathfrak{f}).\qedhere
  \]
\end{lemma}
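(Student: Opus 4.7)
The plan is to verify each of the ten shift formulas by an explicit computation from Definition \ref{def:crossing-volume}, applying Lemma \ref{lemma:flattening-dependence} to each of the four dilogarithm terms in \eqref{eq:crossing-volume}. First I would handle the non-pinched case. Using the formulas for $\zeta_j^0, \zeta_j^1$ from Theorem \ref{thm:our-flattening}, I read off, for each flattening parameter $p \in \{\mu_1, \mu_2, \beta_1, \beta_2, \beta_{1'}, \beta_{2'}, \gamma_N, \gamma_W, \gamma_S, \gamma_E\}$, the integer shifts $(k_j^0, k_j^1)$ at each tetrahedron $j \in \{N, W, S, E\}$ produced by $p \mapsto p + 1$. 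For instance, $\mu_1$ appears in $\zeta_W^0, \zeta_W^1, \zeta_S^0, \zeta_S^1$ with shifts $(-\epsilon, \epsilon, -\epsilon, \epsilon)$ and nowhere else.

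By Lemma \ref{lemma:flattening-dependence}, each dilogarithm contributes $\pi i (k_j^1 \zeta_j^0 - k_j^0 \zeta_j^1)$ to the shift; summing these with the signs $-i\epsilon(+,-,+,-)$ of \eqref{eq:crossing-volume} and simplifying then yields the entry for $p$ in the statement. In most cases the simplification is aided by computing $\zeta_j^0 + \zeta_j^1$, which collects the $\mu$-contributions into a compact form and makes visible the cancellation of the $\kappa$-terms across the four tetrahedra; this cancellation is guaranteed by Lemma \ref{lemma:kappa-independent} and serves as a useful sanity check throughout. Each of the ten entries is then a short linear calculation; for example, continuing the $\mu_1$ case one gets $\Delta\volf(c, \mathfrak f) \equiv 2\pi^2 i[\epsilon(\beta_1 - \beta_{1'}) + \gamma_W - \gamma_S] \pmod{2\pi^2 i \ZZ}$, matching the first line of the statement.

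For pinched crossings the volume is given directly by \eqref{eq:crossing-volume-pinched}, and the shift of each parameter can be read off as the coefficient of that parameter after an expansion. One then checks that the resulting expressions agree with those of the lemma modulo the pinching relations $\beta_{2'} = \beta_1$, $\beta_2 = \beta_1 + \mu_1$, $\beta_{1'} = \beta_1 + \mu_2$ (the logarithmic forms of the relations in Proposition \ref{thm:pinched-def}). Alternatively, one can invoke Lemma \ref{lemma:volf-continuous-pinched}: since $\volf$ is continuous on the space of flattenings and the non-pinched shift formula is affine in the flattening data, the pinched shift is determined by taking the limit, which is exactly the computation done from \eqref{eq:crossing-volume-pinched}.

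I do not anticipate a serious obstacle; the argument amounts to ten parallel calculations, each just a few lines of linear algebra. The main care is bookkeeping the two sources of sign, namely the factor $\epsilon$ already present in the $\zeta_j^k$ and the overall $-i\epsilon$ in \eqref{eq:crossing-volume}, and keeping track of which tetrahedra contain each parameter so that the $\kappa$-terms cancel as predicted by Lemma \ref{lemma:kappa-independent}.
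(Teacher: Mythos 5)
Your proposal is correct and follows essentially the same route as the paper: read off from Theorem \ref{thm:our-flattening} how each flattening parameter enters the $\zeta_j^0, \zeta_j^1$, apply Lemma \ref{lemma:flattening-dependence} to each of the four dilogarithms, and sum with the signs from \eqref{eq:crossing-volume}. Your treatment of the pinched case (direct expansion of \eqref{eq:crossing-volume-pinched} modulo the logarithmic pinching relations, or continuity) is slightly more explicit than the paper's, which simply asserts that the pinched definition was chosen to have the same flattening dependence, but it is the same argument in substance.
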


\begin{proof}
  By \cref{thm:our-flattening}, when we change the value of \(\beta_1\) to \(\beta_1 + k\), we are changing \(\zeta_N^0\) to \(\zeta_N^0 - 2\pi i \epsilon k\) and the same for \(\zeta_W^0\).
  Because \(\tau_N\) has sign \(\epsilon\) and \(\tau_W\) has sign \(-\epsilon\) we can apply \cref{lemma:flattening-dependence} to see that \(\volf\) changes by
  \[
    \begin{aligned}
      &
      \pi k \left( -(\epsilon)(-\epsilon) \zeta_N^1 - (-\epsilon)(-\epsilon) \zeta_W^1 \right)
      \\
      &=
      2\pi^2 i k \left( -(\epsilon)(-\epsilon) (\kappa - \gamma_N) - (-\epsilon)(-\epsilon) (\kappa - \gamma_W + \epsilon \mu_1) \right)
      \\
      &=
      2\pi^2 i k \left( (\kappa - \gamma_N) -  (\kappa - \gamma_W + \epsilon \mu_1) \right)
      \\
      &=
      2\pi^2 i k \left( \gamma_W - \gamma_N -  \epsilon \mu_1) \right)
    \end{aligned}
  \]
  as claimed.
  Here we assumed the crossing was not pinched but our definition of \(\volf\) for pinched crossings was made so that it has the same dependence on the flattening in both cases.
  Repeating these computations for the remaining parameters finishes the proof.
\end{proof}

\begin{marginfigure}
  \centering
\begingroup%
  \makeatletter%
  \providecommand\color[2][]{%
    \errmessage{(Inkscape) Color is used for the text in Inkscape, but the package 'color.sty' is not loaded}%
    \renewcommand\color[2][]{}%
  }%
  \providecommand\transparent[1]{%
    \errmessage{(Inkscape) Transparency is used (non-zero) for the text in Inkscape, but the package 'transparent.sty' is not loaded}%
    \renewcommand\transparent[1]{}%
  }%
  \providecommand\rotatebox[2]{#2}%
  \newcommand*\fsize{\dimexpr\f@size pt\relax}%
  \newcommand*\lineheight[1]{\fontsize{\fsize}{#1\fsize}\selectfont}%
  \ifx\svgwidth\undefined%
    \setlength{\unitlength}{143.03116035bp}%
    \ifx\svgscale\undefined%
      \relax%
    \else%
      \setlength{\unitlength}{\unitlength * \real{\svgscale}}%
    \fi%
  \else%
    \setlength{\unitlength}{\svgwidth}%
  \fi%
  \global\let\svgwidth\undefined%
  \global\let\svgscale\undefined%
  \makeatother%
  \begin{picture}(1,0.98462123)%
    \lineheight{1}%
    \setlength\tabcolsep{0pt}%
    \put(0,0){\includegraphics[width=\unitlength,page=1]{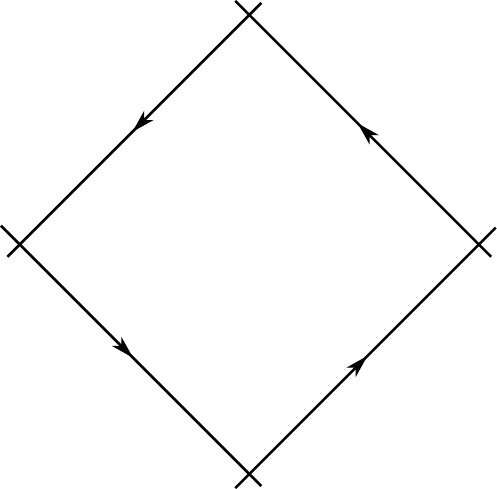}}%
    \put(0.25920426,0.63918978){\makebox(0,0)[lt]{\lineheight{1.25}\smash{\begin{tabular}[t]{l}$1$\end{tabular}}}}%
    \put(0.27312337,0.308078){\makebox(0,0)[lt]{\lineheight{1.25}\smash{\begin{tabular}[t]{l}$2$\end{tabular}}}}%
    \put(0.67648654,0.2950644){\makebox(0,0)[lt]{\lineheight{1.25}\smash{\begin{tabular}[t]{l}$3$\end{tabular}}}}%
    \put(0.68116965,0.6384653){\makebox(0,0)[lt]{\lineheight{1.25}\smash{\begin{tabular}[t]{l}$4$\end{tabular}}}}%
    \put(0.39291379,0.79803717){\makebox(0,0)[lt]{\lineheight{1.25}\smash{\begin{tabular}[t]{l}$\beta_1 - \beta_4$\end{tabular}}}}%
    \put(0.39234248,0.17298606){\makebox(0,0)[lt]{\lineheight{1.25}\smash{\begin{tabular}[t]{l}$\beta_3 - \beta_2$\end{tabular}}}}%
    \put(0.70487045,0.48114036){\makebox(0,0)[lt]{\lineheight{1.25}\smash{\begin{tabular}[t]{l}$\beta_4-\beta_3$\end{tabular}}}}%
    \put(0.09579986,0.48114399){\makebox(0,0)[lt]{\lineheight{1.25}\smash{\begin{tabular}[t]{l}$\beta_2 - \beta_1$\end{tabular}}}}%
  \end{picture}%
\endgroup%

  \caption{The dependence of each crossing on the choice of flattening for the internal region cancels out.}
  \label{fig:region-gluing-example-logarithm}
\end{marginfigure}

\begin{proof}[Proof of \cref{thm:diagram-flattening-dependence}]
  We already know that \(\volf\) does not depend on  the choice of the \(\kappa_j\).
  We next check that \(\volf\) does not depend on the region log-parameters \(\gamma_j\).
  This is an elementary combinatorial argument about oriented planar graphs.
  For example, applying \cref{lemma:partition-function-flattening-dependence} to the region in \cref{fig:region-gluing-example-logarithm} shows that the dependence on the log-parameter of the region is a multiple of
  \[
    \beta_1-\beta_4 + \beta_2 - \beta_1 + \beta_3-\beta_2 + \beta_4 - \beta_3 = 0.
  \]
  If we change the orientation of some or all of the strands we introduce \(\mu_j\) terms that again cancel.
  (Compare \cite[Figure 13]{McPhailSnyder2022}.)
  It is not hard to see this will work for any region of any link diagram.

  To prove \cref{eq:cvol-s-dependence} we need to total up the contributions of changing the parameters \(\set{\beta_j}\) and \(\set{\mu_j}\).
  It suffices to work one variable at a time.%
  \note{
    The term in \cref{eq:cvol-s-dependence} giving the dependence on the \(\set{\beta_j}\) and \(\set{\mu_j}\) does not look linear in them, but it is modulo \(4\pi^2 i \ZZ\).
  }
  Consider a segment \(j\) of the diagram with parameter \(\beta_j\)  in a flattening \(\mathfrak{f}\), and let \(\mathfrak{f}'\) be a flattening that is the same except it assigns this segment \(\beta_j + k\).
  We can use \cref{lemma:partition-function-flattening-dependence} to work out the change \(\Delta \volf = \volf(D, \mathfrak{f}') - \volf(D, \mathfrak{f})\) in the volume of the diagram caused by this.

  Our segment is oriented from an initial to a terminal crossing.
  Let \(\epsilon = 1\) if \(j\) is part of the overarc at its initial crossing and \(-1\) if it is part of the underarc, and similarly for \(\epsilon'\) and the terminal crossing.
  Some case by-case analysis shows that regardless of the signs \(\epsilon, \epsilon'\) the region parameters do not contribute to \(\Delta \volf\), while the \(\mu\) terms give
  \[
    \Delta \volf = 2\pi^2 i k \left[ \epsilon' \mu - \epsilon \mu\right]
  \]
  In turn, this says that
  \[
    \Delta \volf = 4 \pi^2 i k \eta
  \]
  where \(\eta\) is \(1\) if segment \(j\) is under-over and \(-1\) if it is over-under.
  The induced log-decorations have \(\mathfrak{s}'(\lon) = \mathfrak{s}(\lon) + k\), so
  \[
    \Delta \volf = 4 \pi i^2 \Delta \lambda \mu
  \]
  as required.

  For changes in \(\mu_j\) we need to add up terms across a component of \(D\).
  This turns out to be a logarithmic version of the computation in \cite[Section 4.3]{McPhailSnyder2022}, and the same arguments show that changing \(\mu\) gives 
  \[
    \Delta \volf = 
    -4\pi^2 i\Delta \mu (\mathfrak{s}'(\lonb) - \mathfrak{s}(\lonb))
  \]
  where \(\lonb\) is the \emph{blackboard-framed} longitude \(\lonb = w \mer + \lon\).
  Re-writing in terms of \(\lon\) gives the claimed dependence.
\end{proof}

\begin{marginfigure}
  \centering
\begingroup%
  \makeatletter%
  \providecommand\color[2][]{%
    \errmessage{(Inkscape) Color is used for the text in Inkscape, but the package 'color.sty' is not loaded}%
    \renewcommand\color[2][]{}%
  }%
  \providecommand\transparent[1]{%
    \errmessage{(Inkscape) Transparency is used (non-zero) for the text in Inkscape, but the package 'transparent.sty' is not loaded}%
    \renewcommand\transparent[1]{}%
  }%
  \providecommand\rotatebox[2]{#2}%
  \newcommand*\fsize{\dimexpr\f@size pt\relax}%
  \newcommand*\lineheight[1]{\fontsize{\fsize}{#1\fsize}\selectfont}%
  \ifx\svgwidth\undefined%
    \setlength{\unitlength}{106.44869614bp}%
    \ifx\svgscale\undefined%
      \relax%
    \else%
      \setlength{\unitlength}{\unitlength * \real{\svgscale}}%
    \fi%
  \else%
    \setlength{\unitlength}{\svgwidth}%
  \fi%
  \global\let\svgwidth\undefined%
  \global\let\svgscale\undefined%
  \makeatother%
  \begin{picture}(1,0.64456234)%
    \lineheight{1}%
    \setlength\tabcolsep{0pt}%
    \put(0,0){\includegraphics[width=\unitlength,page=1]{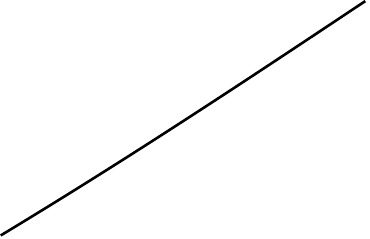}}%
    \put(0.45987414,0.07091752){\makebox(0,0)[lt]{\lineheight{1.25}\smash{\begin{tabular}[t]{l}$\gamma'$\end{tabular}}}}%
    \put(0.07940926,0.28933271){\makebox(0,0)[lt]{\lineheight{1.25}\smash{\begin{tabular}[t]{l}$\gamma$\end{tabular}}}}%
    \put(0.03008961,0.48661083){\color[rgb]{0,0,0}\makebox(0,0)[lt]{\lineheight{1.25}\smash{\begin{tabular}[t]{l}$\beta$\end{tabular}}}}%
    \put(0.73465445,0.01455239){\color[rgb]{0,0,0}\makebox(0,0)[lt]{\lineheight{1.25}\smash{\begin{tabular}[t]{l}$\beta'$\end{tabular}}}}%
    \put(0,0){\includegraphics[width=\unitlength,page=2]{longitude-contrib-logarithm.pdf}}%
  \end{picture}%
\endgroup%

  \caption{The dependence of \(\volf(c,\mathfrak{f})\) on the meridian log-parameter \(\mu\) of the {\color{accent} gold} strand is \(\pi i \Delta \mu( \epsilon(\beta' - \beta) + \gamma' - \gamma).\) }
  \label{fig:longitude-contrib-logarithm}
\end{marginfigure}

\subsection{\texorpdfstring{\(\volf\)}{𝒱} is a link invariant}
Now that we understand how \(\volf\) depends on the flattening we want to show that it is a well-defined link invariant.
Formally:
\begin{theorem}
  \label{thm:partition-function-is-link-invariant}
  Let \((L, \rho)\) be an oriented \(\slg\)-link with a choice \(\mathfrak{s}\) of log-decoration, and let \((D, \chi, \mathfrak{f})\) be a flattened diagram of \(D\) inducing \(\mathfrak{s}\) as in Definition \ref{def:induced-boundary-flattening}.
  Then
  \(
    \volf(L, \rho)(\mathfrak{s})
  \)
  does not depend on the choice of \((D, \chi, \mathfrak{f})\).
  Furthermore, if \(\rho'\) is conjugate to \(\rho\), 
  \(
    \volf(L, \rho')(\mathfrak{s})
    =
    \volf(L, \rho)(\mathfrak{s}).
  \)
\end{theorem}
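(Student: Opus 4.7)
The plan is to reduce the theorem to three independent pieces, matching the data $(D,\chi,\mathfrak{f})$: independence from $\mathfrak{f}$ given $(D,\chi,\mathfrak{s})$, independence from $\chi$ for fixed $(D,\rho,\mathfrak{s})$, and independence from the diagram $D$ presenting $(L,\rho,\mathfrak{s})$. Conjugation invariance will come for free from the $\chi$-independence step, since conjugating $\rho$ amounts to choosing a different shaping representing the same holonomy.

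For the flattening, Theorem \ref{thm:diagram-flattening-dependence} already does the work: once the induced log-decoration $\mathfrak{s}$ on $\partial \extr{L}$ is fixed, $\volf(D,\chi,\mathfrak{f})$ is independent of the rest of $\mathfrak{f}$. So the remaining question is independence from $(D,\chi)$. I would attack the shaping step first: by part 2 of Theorem \ref{thm:shape-paper-results}, any two shapings of the same diagram that yield conjugate holonomies differ by a sequence of local gauge transformations on segments, regions, and a choice of reference flag per component. Each such gauge move only rescales the $a_i, b_i, m_i$ in a way that leaves all four shape parameters $z_\square^0$ at every crossing (read off from Table \ref{table:simplex-data}) unchanged, hence leaves every tetrahedron in the octahedral decomposition geometrically unchanged. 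One then lifts this to flattenings: a gauge move induces an integer shift of the $\beta_k$ and $\gamma_k$, and Lemma \ref{lemma:partition-function-flattening-dependence} together with the combinatorial cancellation used in the proof of Theorem \ref{thm:diagram-flattening-dependence} shows the total effect on $\volf$ is an integer multiple of $2\pi^2 i$. This simultaneously handles conjugation invariance, since conjugating $\rho$ is realised by such a gauge move.

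For diagram independence I would verify the Reidemeister moves (and planar isotopy, which is immediate because the sum over crossings is symmetric in the planar arrangement). For R2, the two created crossings have opposite sign and the four new octahedra glue along shared faces so that tetrahedra cancel pairwise in $\volf$; combined with continuity in the shape parameters (Lemma \ref{lemma:volf-continuous-pinched}) this reduces to a direct check using $\dilr(\zeta^0,\zeta^1) + \dilr(-\zeta^0,-\zeta^1) \equiv 0 \pmod{2\pi^2 i\ZZ}$. For R1, the single new crossing is pinched in an essentially unique way and its contribution under \eqref{eq:crossing-volume-pinched} is engineered to cancel the framing shift $w \mapsto w \pm 1$ in the induced log-longitude \eqref{eq:log-longitude-sum}. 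For R3, I expect to follow the triangulation route already signalled in the introduction: the octahedral decompositions on the two sides of an R3 move are related by a sequence of 2--3 Pachner moves, and the invariance of Neumann's sum under a 2--3 move together with its flattening change is the five-term relation for $\dilr$ proved in \cite[Section 5]{Neumann2004}. One verifies that the shape and flattening data produced by our formulas on either side of the R3 are compatible with the Pachner move, so that Neumann's result applies.

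The main obstacle, as the introduction foreshadows, will be Reidemeister III. The direct diagrammatic computation is forbidding because each side contributes twelve tetrahedra and the matching between their flattenings is not transparent; the cleanest route is the Pachner-move reduction, but then one must carefully check that the log-parameters we have written down actually satisfy Neumann's even-flattening condition on each intermediate triangulation, and that the pinched-crossing cases remain controlled via Lemma \ref{lemma:volf-continuous-pinched}. Once R3 is in hand, the blow-up/blow-down move needed later is a small extension of R1 treated by the same limit argument. Orientation reversal of a component and other sign conventions drop out of the symmetry $\dilr(-\zeta^0,-\zeta^1) \equiv -\dilr(\zeta^0,\zeta^1)$ combined with the behaviour of shapes under strand reversal recorded in \cite{McPhailSnyder2022}.
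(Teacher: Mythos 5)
Your Reidemeister-move architecture (R1 via the pinched-kink computation cancelling against the change in the induced log-longitude, R2 by direct cancellation, R3 by reducing to 3--2 Pachner moves and Neumann's triangulation-based well-definedness, with \cref{lemma:volf-continuous-pinched} handling degenerate limits) is essentially the paper's proof, and the reduction of flattening-independence to \cref{thm:diagram-flattening-dependence} is also how the paper proceeds.

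The genuine gap is in your second step, where you claim that two shapings of a fixed diagram with conjugate holonomy differ by local gauge moves that ``leave all four shape parameters at every crossing unchanged.'' This is not true in general, and nothing in \cref{thm:shape-paper-results} supports it. The shape parameters in \cref{table:simplex-data} are ratios of \(b\)-variables (and \(m\)'s), and the fiber of the map from shapings of \(D\) to conjugacy classes of representations with fixed log-decoration is positive-dimensional in a way that moves the \(b\)'s: for instance, conjugating \(\rho\) by a diagonal matrix rescales the \(b\)-variables componentwise, which already changes cross-component ratios such as \(m_1 b_1/b_2\) at a crossing between two link components, and hence changes the geometry of the tetrahedra in the octahedral decomposition. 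So the tetrahedra are \emph{not} geometrically unchanged, and your argument that the effect on \(\volf\) is an integer multiple of \(2\pi^2 i\) via \cref{lemma:partition-function-flattening-dependence} does not get off the ground --- that lemma controls integer shifts of logarithms of \emph{fixed} shapes, not deformations of the shapes themselves. This is exactly the point where the paper does something you have not replaced: following \textcite[Theorem 5.10]{Blanchet2018}, it realizes an arbitrary gauge transformation (including global conjugation) by adjoining auxiliary strands to the diagram, so that the gauge-transformed shaping is related to the original by Reidemeister moves on the augmented diagram; gauge- and conjugation-invariance are then \emph{consequences} of Reidemeister invariance rather than an independent input. You need either that argument or some substitute (e.g.\ connectivity of the fiber plus local constancy of the dilogarithm sum along deformations preserving the boundary data) to close the shaping-independence and conjugation-invariance claims.
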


The essential reason this works is an algebraic property of \(\dilr\).
Say \((D,\chi)\) and \((D',\chi')\) are two different diagrams of \(L\) with the same holonomy representations.
The induced octahedral decompositions are different triangulations of the same topological space \(S^3 \setminus L\), so they are connected by a series of 3-2 moves (also known as Pachner moves, Matveev--Piergallini moves, or bistellar flips), as in \cite[Figure 3]{Baseilhac2005}.
These moves extend to shaped and flattened triangulations \cite[Sections 2.1.3,2.1.4]{Baseilhac2005}.
Because the dilogarithm is also invariant under these moves (up to elements of \(2\pi^2 i\ZZ\)) the dilogarithm sum in
\cref{thm:volume-neumann} depends only on \(L\) and \(\rho\).

We want to extend this argument slightly to include \emph{pinched} diagrams where some of the tetrahedra become geometrically degenerate.
This could probably be done by extending the flattened 3-2 moves to allow (perhaps only some) degenerate tetrahedra.
We prefer to take a slightly different approach inspired by the biquandle calculus of \citeauthor{Blanchet2018} \cite{Blanchet2018}.
Just as we can prove a function on link diagrams is a link invariant by showing it is unchanged under Reidemeister moves, we can prove our function  \(\volf(D, \chi)(\mathfrak{s})\) is an invariant of links with extra structure by showing it is invariant under decorated Reidemeister moves.

\begin{lemma}[Invariance under the RI move]
  Let \((D, \chi)\) be a shaped link diagram.
  Apply an RI move to remove a kink and obtain another shaped diagram \((D', \chi')\).
  Then
  \[
    \volf(D, \chi)(\mathfrak{s}) = \volf(D', \chi')(\mathfrak{s})
  \]
  for any choice of log-decoration \(\mathfrak{s}\).
\end{lemma}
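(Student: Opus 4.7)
The plan is to reduce the claim to a direct, local computation at the kink crossing by means of the pinched–crossing formula \eqref{eq:crossing-volume-pinched}. Since \cref{thm:diagram-flattening-dependence} tells us that $\volf(D,\chi)(\mathfrak{s})$ depends only on the induced log-decoration, it suffices to produce flattenings $\mathfrak{f}$ of $D$ and $\mathfrak{f}'$ of $D'$ inducing the \emph{same} log-decoration $\mathfrak{s}$, and then to check $\volf(D,\chi,\mathfrak{f}) \equiv \volf(D',\chi',\mathfrak{f}') \pmod{2\pi^2 i\ZZ}$ by hand. Because all crossings other than the kink appear identically in the two diagrams, this in turn reduces to showing that the contribution of the kink crossing lies in $2\pi^2 i \ZZ$.

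First I would show that every kink is pinched, so that the relevant formula really is \eqref{eq:crossing-volume-pinched}. Up to the symmetries of inverting the crossing and of reflecting the loop, we may assume the kink is positive with the outgoing strand $1'$ looped round to become the incoming strand $2$, giving $\chi_{1'}=\chi_2$. Since \eqref{eq:m-transf-positive} already gives $m_{1'}=m_1$ and $m_{2'}=m_2$, the identification forces $m_1=m_2=:m$, and then $m_2 b_2 = m b_2 = m b_{1'} = m_1 b_{1'}$ is one of the four equivalent pinching relations of \cref{thm:pinched-def}. The three remaining orientation/sign combinations reduce to this case by the same argument with $B^{-1}$ in place of $B$, so the kink crossing is always pinched.

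Next I would choose flattenings so the comparison is as transparent as possible. Pick any flattening $\mathfrak{f}'$ of $D'$, copy its $\mu$- and $\beta$-values onto all segments of $D$ that are unaffected by the kink, and on the segments and regions created by the kink choose logarithms compatible with the kink identifications ($\beta_{1'}=\beta_2$, $\mu_1=\mu_2$, and equality of the $\gamma$-parameters on the pair of regions that merge once the kink is erased). A direct comparison using \eqref{eq:log-longitude-sum} then shows that the two induced log-decorations coincide: the $\pm\mu$ shift coming from the change of writhe cancels exactly against the new $(-1)^{\eta}\beta$ contributions from the kink segments.

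It then remains to substitute these identifications into \eqref{eq:crossing-volume-pinched} and observe that with $\beta_{1'}=\beta_2$, $\mu_1=\mu_2$, and the fused $\gamma$-parameters, the four lines of the formula collapse pairwise into an expression with integer coefficients in $2\pi^2 i$, i.e.\ lying in $2\pi^2 i\ZZ$. The hard part is precisely this bookkeeping: \eqref{eq:crossing-volume-pinched} is bilinear in the $\beta$'s, $\mu$'s, and $\gamma$'s, so the cancellations do not occur termwise but only after reorganisation, and the case split into the four kink types must be carried out carefully to pin the answer down modulo $2\pi^2 i\ZZ$ rather than merely modulo $\pi^2 i\ZZ$.
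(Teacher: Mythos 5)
Your overall strategy is sound and close in spirit to the paper's: you correctly observe that every kink is pinched (indeed \(\chi_1=\chi_{2'}\) forces \(b_{2'}=b_1\), one of the four equivalent relations of \cref{thm:pinched-def}), and the reduction via \cref{thm:diagram-flattening-dependence} — prove equality for one log-decoration, then propagate to all of them using \eqref{eq:cvol-s-dependence} — is exactly how the paper closes its argument. The difference is that you try to normalize the flattening so that the induced log-decorations of \(D\) and \(D'\) coincide and the kink contributes an element of \(2\pi^2 i\ZZ\), whereas the paper allows an arbitrary flattening, computes the kink's generically \emph{nonzero} volume, and shows it is exactly absorbed by the resulting shift of the induced log-decoration.

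The gap is in your middle step. The conditions you impose (\(\beta_{1'}=\beta_2\), \(\mu_1=\mu_2\), matching \(\gamma\)'s, and implicitly \(\beta_{2'}=\beta_1\) so the flattening descends to \(D'\)) are all automatic or free and do \emph{not} pin down the log-parameter \(\beta'\defeq\beta_2=\beta_{1'}\) of the loop segment relative to \(\beta\defeq\beta_1\) and \(\mu\). Since \(b_2=m_1b_1\), any \(\beta'\) with \(\beta'-\beta-\mu=k\in\ZZ\) is admissible, and a direct computation with \eqref{eq:crossing-volume-pinched} gives the kink volume \(4\pi^2 i\,\mu(\beta-\beta'+\mu)=-4\pi^2 i\,\mu k\), which is generically \emph{not} in \(2\pi^2 i\ZZ\); correspondingly the two induced log-decorations differ by \(k\) on the longitude, so your claim that they "coincide" is false for a generic compatible flattening. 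The asserted cancellation between the writhe shift \(-\mu\) and the new \(\beta\)-contributions in \eqref{eq:log-longitude-sum} is precisely the equation \(\beta'=\beta+\mu\), i.e.\ \(k=0\) — you are assuming the thing that needs to be arranged. The fix is small: add the explicit normalization \(\beta_2=\beta_1+\mu_1\) to your list of choices, after which the kink volume is exactly \(0\) and the rest of your argument goes through. Alternatively, follow the paper and keep \(k\) arbitrary: the kink contributes \(-4\pi^2 i\mu k\) while \(\mathfrak{s}'(\lon)=\mathfrak{s}(\lon)+k\), and \eqref{eq:cvol-s-dependence} shows these two effects cancel.
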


\begin{proof}
  Choose a flattening \(\mathfrak{f}\) of \((D,\chi)\) and let \(\mathfrak{s}\) be the induced log-decoration.
  Consider a kink in \(D\), as in \cref{fig:kink}.
  \begin{marginfigure}
    \centering
\begingroup%
  \makeatletter%
  \providecommand\color[2][]{%
    \errmessage{(Inkscape) Color is used for the text in Inkscape, but the package 'color.sty' is not loaded}%
    \renewcommand\color[2][]{}%
  }%
  \providecommand\transparent[1]{%
    \errmessage{(Inkscape) Transparency is used (non-zero) for the text in Inkscape, but the package 'transparent.sty' is not loaded}%
    \renewcommand\transparent[1]{}%
  }%
  \providecommand\rotatebox[2]{#2}%
  \newcommand*\fsize{\dimexpr\f@size pt\relax}%
  \newcommand*\lineheight[1]{\fontsize{\fsize}{#1\fsize}\selectfont}%
  \ifx\svgwidth\undefined%
    \setlength{\unitlength}{55.43417931bp}%
    \ifx\svgscale\undefined%
      \relax%
    \else%
      \setlength{\unitlength}{\unitlength * \real{\svgscale}}%
    \fi%
  \else%
    \setlength{\unitlength}{\svgwidth}%
  \fi%
  \global\let\svgwidth\undefined%
  \global\let\svgscale\undefined%
  \makeatother%
  \begin{picture}(1,1.46606328)%
    \lineheight{1}%
    \setlength\tabcolsep{0pt}%
    \put(0,0){\includegraphics[width=\unitlength,page=1]{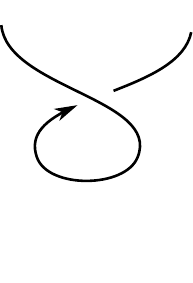}}%
    \put(0.28406465,0.29008601){\makebox(0,0)[lt]{\lineheight{1.25}\smash{\begin{tabular}[t]{l}$\beta'$\end{tabular}}}}%
    \put(0.39278587,0.67929231){\makebox(0,0)[lt]{\lineheight{1.25}\smash{\begin{tabular}[t]{l}$\gamma'$\end{tabular}}}}%
    \put(0.41017677,1.32625062){\makebox(0,0)[lt]{\lineheight{1.25}\smash{\begin{tabular}[t]{l}$\gamma$\end{tabular}}}}%
    \put(0.36899018,0.03875166){\makebox(0,0)[lt]{\lineheight{1.25}\smash{\begin{tabular}[t]{l}$\gamma''$\end{tabular}}}}%
    \put(1.04697098,1.11707732){\makebox(0,0)[lt]{\lineheight{1.25}\smash{\begin{tabular}[t]{l}$\beta''$\end{tabular}}}}%
    \put(-0.14871975,1.08963116){\makebox(0,0)[lt]{\lineheight{1.25}\smash{\begin{tabular}[t]{l}$\beta$\end{tabular}}}}%
  \end{picture}%
\endgroup%

    \caption{A kink in the diagram.}
    \label{fig:kink}
  \end{marginfigure}
  The shapes at a kink always satisfy
  \begin{align*}
    \chi_1 = \chi_{2'} = (a, b, m)
    \text{ and }
    \chi_2 = \chi_{1'} = (a', mb, m) 
  \end{align*}
  so in particular the shapes \(\chi_1 = \chi_{2'}\) of the incoming and outgoing strands are the same.%
  \note{%
    This corresponds to the fact that our braiding defines a biquandle and not a birack.
  }
  In terms of the flattening parameters, the volume of the kink is
  \[
    2\pi^2 i (\beta'' - \beta)(\gamma'' - \gamma)
    + 2\pi^2 i \mu ( \beta'' + \beta - 2 \beta' + 2\mu)
  \]
  However, in order to be able to remove the kink there are some restrictions on the flattening.
  Specifically, after applying the RI move the incoming and outgoing segment become the same segment, so we must have  \(\beta'' = \beta\), and the volume of the kink is
  \[
    4 \pi^2 i \mu ( \beta -  \beta' + \mu).
  \]
  This seems like a problem: we claimed that \(\volf\) was invariant under the RI move, but eliminating this crossing causes the volume to change!
  This is because eliminating the kink also changes the log-decoration from \(\mathfrak{s}\) to 
  \[
    \mathfrak{s'}(\lon) = \mathfrak{s}(\lon) - (\beta -\beta' + \mu)
  \]
  where \(\lon\) is the zero-framed longitude of the component containing the kink.
  We have computed that
  \[
    \volf(D, \chi)(\mathfrak{s}) - 4\pi^2 i \mu(\beta - \beta' + \mu) = \volf(D', \chi')(\mathfrak{s'}) 
  \]
  where \(\mathfrak{s}'\) is the log-decoration induced by the flattening \emph{after} the RI move.
  Comparing this to the dependence on the log-decoration given in \cref{eq:cvol-s-dependence} shows that
  \[
    \volf(D, \chi)(\mathfrak{s}')
    =
    \volf(D, \chi)(\mathfrak{s}) - 4\pi^{2} i \mu(\beta - \beta' + \mu)
    =
    \volf(D', \chi')(\mathfrak{s}').
  \]
  Because they agree for one log-decoration and obey the transformation rule \eqref{eq:cvol-s-dependence} they must agree for all choices of log-decoration.
  Our proof was for a particular type of kink (positive crossing, on the right-hand side) but the same argument works for the other cases.
\end{proof}

\begin{corollary}
  \label{thm:kink-value}
  If the flattening of the kink in \cref{fig:kink} is chosen so that \(\beta'' = \beta\), then its volume is
  \begin{equation}
    -4\pi^2 i \mu (\beta' - \beta - \mu).
  \end{equation}
\end{corollary}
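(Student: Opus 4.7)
The proof is essentially a direct substitution into the formula computed during the preceding invariance lemma. My plan is as follows.

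In the proof of the RI invariance lemma, we already derived that the volume of the kink shown in \cref{fig:kink}, expressed in terms of the flattening parameters $\beta, \beta', \beta'', \mu$ and the region log-parameters $\gamma, \gamma', \gamma''$, equals
\[
  2\pi^2 i (\beta'' - \beta)(\gamma'' - \gamma)
  + 2\pi^2 i \mu (\beta'' + \beta - 2\beta' + 2\mu).
\]
I would therefore simply recall this formula (citing the computation from the previous lemma) and then specialize to the case $\beta'' = \beta$ that is imposed by the hypothesis. Under this substitution the first summand vanishes identically, and the second collapses to
\[
  2\pi^2 i \mu (2\beta - 2\beta' + 2\mu) = 4\pi^2 i \mu (\beta - \beta' + \mu).
\]
Rewriting $\beta - \beta' + \mu = -(\beta' - \beta - \mu)$ yields the claimed value $-4\pi^2 i \mu (\beta' - \beta - \mu)$.

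No genuine obstacle arises here: the only nontrivial content is the derivation of the general kink formula, which has already been carried out in the proof of the preceding lemma, and the region-parameter dependence automatically drops out once $\beta'' = \beta$ is enforced. I would keep the proof to two or three sentences, pointing the reader back to the relevant computation in the RI-invariance argument rather than repeating it.
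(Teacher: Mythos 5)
Your proposal is correct and is exactly how the paper treats this corollary: the general kink-volume formula $2\pi^2 i (\beta'' - \beta)(\gamma'' - \gamma) + 2\pi^2 i \mu(\beta'' + \beta - 2\beta' + 2\mu)$ is derived inside the proof of the RI-invariance lemma, and the corollary is obtained by setting $\beta'' = \beta$, which kills the region-parameter term and leaves $4\pi^2 i \mu(\beta - \beta' + \mu) = -4\pi^2 i \mu(\beta' - \beta - \mu)$. Nothing further is needed.
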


\begin{lemma}[Invariance under RII moves]
  Let \((D, \chi)\) be a shaped link diagram.
  Apply an RII move to eliminate a pair of adjacent crossings of opposite sign and obtain another shaped diagram \((D', \chi')\).
  Then
  \[
    \volf(D, \chi)(\mathfrak{s}) = \volf(D', \chi')(\mathfrak{s})
  \]
  for any choice of log-decoration \(\mathfrak{s}\).
\end{lemma}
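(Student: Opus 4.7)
The plan is to mirror the structure of the RI proof: first show that $\mathfrak{f}$ and its restriction $\mathfrak{f}'$ to $(D',\chi')$ induce the \emph{same} log-decoration $\mathfrak{s}$, and then show that the total contribution of the two cancelled crossings to $\volf$ vanishes modulo $2\pi^2 i\ZZ$.

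For the first step, fix a flattening $\mathfrak{f}$ of $(D,\chi)$. The RII move requires the two crossings to have opposite signs, so the writhe of each component is unchanged. Moreover, the two segments that ``tunnel through'' the bigon are replaced by single segments whose $\beta$-parameters can be chosen to agree with the original incoming ones, since the shapes on the exterior of the bigon agree by $B^{-1} \circ B = \id$ (this is the biquandle identity underlying \cref{def:braiding}). Consulting \cref{eq:log-longitude-sum}, the sums defining $\mathfrak{s}(\lon_j)$ are identical before and after the move, and $\mathfrak{s}(\mer_j)$ is obviously preserved. Hence $\mathfrak{f}$ and $\mathfrak{f}'$ induce the same log-decoration.

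For the second step I would show that the eight tetrahedra at the cancelled crossings contribute $0 \pmod{2\pi^2 i\ZZ}$. The conceptual route is to note that the pair of adjacent octahedra from the octahedral decomposition glue along a shared pair of faces into an $8$-tetrahedron cell, which by a sequence of flattened $3$-$2$ Pachner moves (as in \cite[Section 2.1.4]{Baseilhac2005}) can be reduced to a trivial triangulation of a $3$-ball, leaving the ambient triangulation unchanged. Since Neumann's lifted dilogarithm sum (\cref{thm:volume-neumann}) is invariant under flattened $3$-$2$ moves modulo $2\pi^2 i\ZZ$, the alternating sum of $\dilr$ contributions from the cancelled crossings collapses. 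Alternatively one can carry out the cancellation algebraically, expanding each of the four $\dilr$ terms per crossing using \cref{thm:our-flattening} and the defining relations of $B$ and $B^{-1}$, but this is considerably more painful.

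The main obstacle is the combinatorial bookkeeping: there are several sub-cases depending on strand orientations and on which of the two crossings is positive, and one must handle the possibility that some of the eight tetrahedra are pinched. To avoid a case-by-case grind my plan is to invoke \cref{thm:diagram-flattening-dependence}: once the log-decoration is shown to be preserved, the difference $\volf(D,\chi)(\mathfrak{s}) - \volf(D',\chi')(\mathfrak{s})$ is independent of the remaining internal choices in $\mathfrak{f}$, so it suffices to verify vanishing at one convenient one-parameter family of shapings on the bigon and then extend to all shapings (including the degenerate/pinched limit) using the continuity statement of \cref{lemma:volf-continuous-pinched}.
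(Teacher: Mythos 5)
Your overall architecture matches the paper's: the actual proof consists of (i) observing that the shapes at the input and output of the bigon agree (the biquandle identity $B^{-1}\circ B=\id$), so one can and must choose the segment and region log-parameters to match there, which is exactly your step showing the induced log-decoration is unchanged; and (ii) checking that $\volf$ vanishes on the two-crossing tangle of \cref{fig:RII-flattened.pdf_tex}. The paper does step (ii) by the direct algebraic computation you call ``considerably more painful''; your alternative route through flattened $3$--$2$ moves is legitimate and is in fact the strategy the paper reserves for the harder RIII and blow-up moves, so either of those two options would complete the argument.

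The genuine gap is in your final shortcut. \Cref{thm:diagram-flattening-dependence} eliminates the dependence on the \emph{internal flattening choices for a fixed shaping} $\chi$; it says nothing about how $\volf$ varies as the \emph{shapes} themselves vary. The shapings of the bigon are parametrized by the two incoming shapes $(\chi_1,\chi_2)$, a six-complex-dimensional family, so verifying the vanishing on ``one convenient one-parameter family of shapings'' and appealing to \cref{lemma:volf-continuous-pinched} cannot propagate the identity to all shapings: continuity only extends an identity from a dense subset to its closure, not off a positive-codimension subfamily. (Invoking conjugation-invariance to move between shapings would be circular, since that is proved \emph{from} the Reidemeister moves.) The continuity lemma is correctly used only for the degeneration to pinched crossings, i.e.\@ passing from the open dense non-pinched locus to its boundary, exactly as in the paper's RIII argument. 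So you should replace that last reduction by either the full direct check over all $(\chi_1,\chi_2)$ or the Pachner-move argument applied for every non-pinched shaping, followed by the pinched limit.
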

\begin{proof}
  Again we need to be slightly careful with our choice of flattening:
  the segment and region log-parameters at the input and output of our diagram must match in order to apply the RII move.
  It is a consequence of the definition of the braiding that the \emph{shapes} at the input and output are the same, so our requirement is just that we take the same logarithms of these numbers.
  When the flattening match it is easy to check directly that \(\volf\) vanishes on the diagram in Figure \cref{fig:RII-flattened.pdf_tex}.
  \begin{marginfigure}
\begingroup%
  \makeatletter%
  \providecommand\color[2][]{%
    \errmessage{(Inkscape) Color is used for the text in Inkscape, but the package 'color.sty' is not loaded}%
    \renewcommand\color[2][]{}%
  }%
  \providecommand\transparent[1]{%
    \errmessage{(Inkscape) Transparency is used (non-zero) for the text in Inkscape, but the package 'transparent.sty' is not loaded}%
    \renewcommand\transparent[1]{}%
  }%
  \providecommand\rotatebox[2]{#2}%
  \newcommand*\fsize{\dimexpr\f@size pt\relax}%
  \newcommand*\lineheight[1]{\fontsize{\fsize}{#1\fsize}\selectfont}%
  \ifx\svgwidth\undefined%
    \setlength{\unitlength}{127.50840569bp}%
    \ifx\svgscale\undefined%
      \relax%
    \else%
      \setlength{\unitlength}{\unitlength * \real{\svgscale}}%
    \fi%
  \else%
    \setlength{\unitlength}{\svgwidth}%
  \fi%
  \global\let\svgwidth\undefined%
  \global\let\svgscale\undefined%
  \makeatother%
  \begin{picture}(1,0.54077601)%
    \lineheight{1}%
    \setlength\tabcolsep{0pt}%
    \put(0,0){\includegraphics[width=\unitlength,page=1]{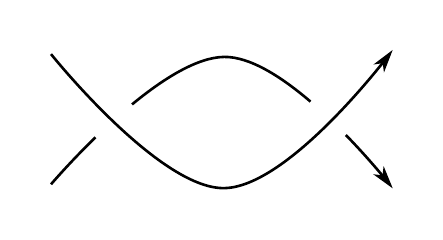}}%
    \put(0.02123498,0.42727488){\makebox(0,0)[lt]{\lineheight{1.25}\smash{\begin{tabular}[t]{l}$\beta_1$\end{tabular}}}}%
    \put(0.0222754,0.10385286){\makebox(0,0)[lt]{\lineheight{1.25}\smash{\begin{tabular}[t]{l}$\beta_2$\end{tabular}}}}%
    \put(0.89191869,0.42727242){\makebox(0,0)[lt]{\lineheight{1.25}\smash{\begin{tabular}[t]{l}$\beta_1$\end{tabular}}}}%
    \put(0.89295916,0.10385036){\makebox(0,0)[lt]{\lineheight{1.25}\smash{\begin{tabular}[t]{l}$\beta_2$\end{tabular}}}}%
    \put(0.0735819,0.27064916){\makebox(0,0)[lt]{\lineheight{1.25}\smash{\begin{tabular}[t]{l}$\gamma$\end{tabular}}}}%
    \put(0.84479915,0.27063899){\makebox(0,0)[lt]{\lineheight{1.25}\smash{\begin{tabular}[t]{l}$\gamma$\end{tabular}}}}%
  \end{picture}%
\endgroup%

    \caption{
      Showing that \(\volf\) vanishes on this diagram gives invariance  under the Reidemeister II move.
      The flattening should be compatible in the sense that the log-parameters on each boundary should match, as indicated.
    }
    \label{fig:RII-flattened.pdf_tex}
  \end{marginfigure}
\end{proof}

\begin{figure}
  \centering
  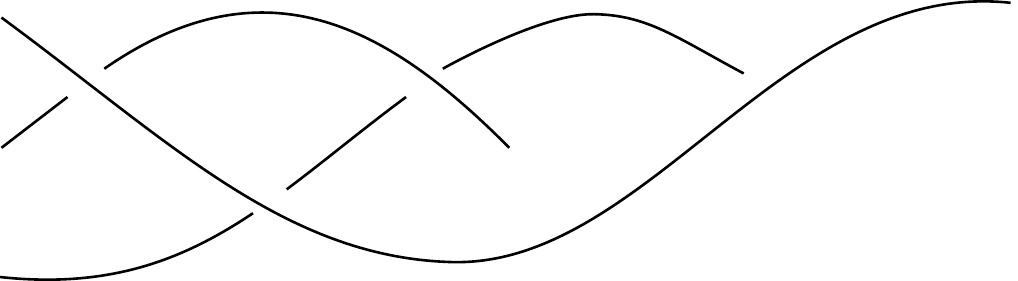
  \caption{Showing that \(\volf\) vanishes on this diagram for every choice of shapes \(\chi_i\) and appropriate flattenings gives invariance  under the Reidemeister III move.}
  \label{fig:RIII-colored}
\end{figure}

Once we know RI and RII hold, invariance under RIII moves follows from showing that the diagram in \cref{fig:RIII-colored} vanishes for every choice of shapes \(\chi_1, \chi_2, \chi_3\).
As before, we need some conditions on the flattenings:

\begin{lemma}
  \label{lemma:RIII-invariance}
  Let \(\chi\) be a shaping of the diagram \(D\) in \cref{fig:RIII-colored}, let \(\mathfrak{s}\) be a log-decoration sending the longitude of each component to \(0\), and let \(\mathfrak{f}\) be any flattening of \(D\) that inducing \(\mathfrak{s}\) that assigns the same log-parameter to the regions  \(R_{12}\), \(R_{12}'\) and to \(R_{23}\), \(R_{23}'\).
  Then
  \[
    \volf(D, \chi, \mathfrak{s})
    =
    0.
    \qedhere
  \]
\end{lemma}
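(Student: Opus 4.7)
The plan is to reduce RIII-invariance to the invariance of the flattened dilogarithm sum under $3\to 2$ Pachner moves on shaped, flattened ideal triangulations, paralleling the strategy used to prove \cref{thm:volume-neumann} (see \cite{Baseilhac2005,Neumann2004}). First I would assume that all twelve tetrahedra produced by the octahedral decompositions of the two sides of the diagram in \cref{fig:RIII-colored} are geometrically nondegenerate. Each side then gives an ideal triangulation of a common topological piece (a $3$-ball with three properly embedded arcs removed). Because both triangulate the same space and carry the same hyperbolic structure specified by $\chi$, they are related by a finite sequence of shaped $3\to 2$ moves.

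Next I would invoke the lifted five-term relation: the signed sum $\sum_j \epsilon_j \dilr(\zeta_j^0,\zeta_j^1)$ is unchanged modulo $2\pi^2 i\ZZ$ under any shaped and flattened $3\to 2$ move. The hypotheses in the lemma — matching region log-parameters on the pairs $R_{12},R_{12}'$ and $R_{23},R_{23}'$, matching log-parameters on the six boundary strands, and $\mathfrak{s}(\lon)=0$ on every component — are precisely what is needed to ensure that the flattenings on the two sides agree on the shared boundary of the triangulated pieces and that consistent intermediate flattenings exist throughout the sequence of Pachner moves. Summing the contributions, the two sides cancel and $\volf(D,\chi,\mathfrak{s})\equiv 0$.

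To handle diagrams with pinched crossings, I would appeal to \cref{lemma:volf-continuous-pinched}: approach the given (possibly pinched) shaping through a continuous family of nondegenerate shapings respecting the same boundary log-parameters. By the previous step each member of the family has $\volf = 0$, and by continuity the limit also vanishes.

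The main obstacle I expect is the bookkeeping in the middle step. The region log-parameters $\gamma_j$ are not local to individual tetrahedra, so it is not automatic that the shared boundary conditions suffice to extend compatible flattenings through every intermediate triangulation produced by the Pachner sequence. To close this gap I would rely on \cref{thm:diagram-flattening-dependence}: once any consistent choice of intermediate flattenings exists giving a valid computation on each side, the final value of $\volf$ is independent of that choice, and the equality follows. Translating the condition $\mathfrak{s}(\lon)=0$ into the analogue of the boundary condition \eqref{eq:boundary-log-condition-Neumann14.7} on the triangulations of the two sides is the concrete form this bookkeeping takes.
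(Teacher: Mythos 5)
Your proposal is correct and follows essentially the same route as the paper: reduce to the nondegenerate case, relate the two octahedral decompositions by shaped, flattened $3$--$2$ moves, and handle pinched crossings by continuity via \cref{lemma:volf-continuous-pinched}. The only cosmetic difference is that the paper sidesteps the flattening bookkeeping you worry about by appealing to the global invariance of Neumann's simplicial formula (inserting the tangle of \cref{fig:RIII-colored} into a link diagram would otherwise produce two different volumes for the same link, with the hypothesis $\mathfrak{s}(\lon)=0$ guaranteeing the log-decoration is unchanged), whereas you track the lifted five-term relation through the Pachner sequence directly and then invoke \cref{thm:diagram-flattening-dependence} to absorb the choice of intermediate flattenings.
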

\begin{proof}
  The octahedral decomposition gives an ideal triangulation of the complement of the braid in \cref{fig:RIII-colored}.
  When none of the crossings are pinched the tetrahedra are all nondegenerate and \(\volf\) is just the sum of their volumes computed using the lifted dilogarithm \(\dilr\).
  We claim that applying a series of 3-2 moves to these tetrahedra shows that  \(\volf \equiv 0 \pmod{2\pi^2 i \ZZ}\).
  This is a consequence of the usual, triangulation-based method \cite[Theorem 14.5]{Neumann2004} to compute complex volumes: if our claim were false then we could insert the diagram in \cref{fig:RIII-colored} into a link diagram and get a different volume for the same link.
  The condition on \(\mathfrak{s}\) ensures that we are not changing the log-decoration by inserting or removing the diagram.
 
  This proves the lemma when none of the crossings are pinched.
  When they are we can again rely on continuity and take the limit.
\end{proof}

\begin{proof}[Proof of \cref{thm:partition-function-is-link-invariant}]
  We know that \(\volf\) is invariant under the Reidemeister moves discussed above.
  Strictly speaking we should also consider variants of the Reidemeister moves with different orientations of the strands (not all left-to-right).
  This can be done either by repeating the proofs given above with different orientations (they go through the same) or by applying the results of the next subsection to reverse parts of link components as necessary.
  Either way we conclude that \(\volf(L, \rho, \mathfrak{s})\) does not depend on the choice of diagram \(D\) representing \(\rho\).
  
  The gauge-invariance claim also follows from invariance under Reidemeister moves.
  An important idea of \textcite[Section 3.3]{Blanchet2018} is that this gives gauge-invariance as well: we can always realize gauge transformations via adding new strands to our diagram, and then gauge-invariance follows from the Reidemeister moves, as in \cite[Theorem 5.10]{Blanchet2018}.
  (The extra category-theoretic requirements in that theorem are satisfied in our example for trivial reasons.)
\end{proof}

\subsection{Behavior under orientation changes}
So far all our definitions are for oriented links, so now we want to understand how they depend on this choice.
In addition, link exteriors in \(S^3\) inherit an orientation from \(S^3\), and we want to understand how the volume changes when taking the opposite orientation.

First, we show that the \emph{link} orientation doesn't matter: if we change the orientation of a diagram component and make appropriate changes to the shapes and flattenings we get the same volume.
\begin{definition}
  \label{def:inverse-shape}
  For any shape \(\chi = (a,b,m)\), the \defemph{inverse} shape is
  \begin{equation}
    \chi^{-1} \defeq (a^{-1}, bm, m^{-1}).
  \end{equation}
  It is straightforward to check that the holonomy \cite[Section 2.2]{McPhailSnyder2022} of a segment with shape \(\chi\) is equal to the holonomy of a segment with reversed orientation and shape \(\chi^{-1}\).
\end{definition}

\begin{definition}
  To reverse the orientation of a component of a tangle diagram, we
  \begin{itemize}
    \item change its orientation,
    \item replace all the shapes with their inverses,
    \item change log-meridian \(\mu\) to \(-\mu\), and
    \item replace each segment log-parameter \(\beta\) with \(\beta + \mu\).
  \end{itemize}
  Note that the region variables and region log-parameters are unaffected.
\end{definition}

\begin{proposition}
  The volume of a flattened tangle diagram does not change when reversing components.
\end{proposition}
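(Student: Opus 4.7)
The plan is to verify invariance locally, one crossing at a time. Since $\volf(D,\chi,\mathfrak{f}) = \sum_c \volf(c, \mathfrak{f})$ and reversing a component only modifies data on segments of that component, it suffices to show that $\volf(c, \mathfrak{f})$ is unchanged at each crossing $c$. There are three cases according to how many of the two strands at $c$ belong to the reversed component: zero (nothing to check), two (a self-crossing of the reversed component), or one. By \cref{thm:diagram-flattening-dependence} we only need verify the equality for a convenient choice of flattening; in particular we may assume region log-parameters are unchanged by the reversal (which they are by definition).

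For the case of both strands reversed, reversing both orientations at a crossing of sign $\epsilon$ produces, after a $180^{\circ}$ rotation of the local picture, a crossing of the \emph{same} sign with the segments relabeled $1 \leftrightarrow 1'$ and $2 \leftrightarrow 2'$ and the regions $N \leftrightarrow S$ (with $W, E$ fixed). Using the inverse-shape formula $\chi \mapsto (a^{-1}, bm, m^{-1})$ together with the log-shift $(\beta, \mu) \mapsto (\beta + \mu, -\mu)$, one checks directly from \cref{table:simplex-data} that the ratios defining the tetrahedral shape parameters are preserved under the relabeling: for example the new $z_N^0 = b_{2'}/b_1$ (computed with reversed data) reduces to the old $z_S^0 = m_2 b_2 / m_1 b_{1'}$, and similarly for the remaining three tetrahedra. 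The corresponding log-parameters from \cref{thm:our-flattening} transform compatibly, so the alternating sum in \eqref{eq:crossing-volume} is preserved (with the $N \leftrightarrow S$ swap absorbed by the fact that $\tau_N$ and $\tau_S$ both enter with the same sign).

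For the case of a single reversed strand, the sign of the crossing flips from $\epsilon$ to $-\epsilon$ and some of the roles $1,2,1',2'$ are interchanged; one can verify invariance either by a direct calculation using the formulas of Definition \ref{def:braiding} (checking that applying $B$ or $B^{-1}$ after inverting the appropriate shape yields the correct outgoing data), or more conceptually by reducing to the previous case: inserting a cup-cap pair on the reversed strand via an RII move (already shown to preserve $\volf$) converts the single-strand reversal at $c$ into a full reversal plus a kink whose contribution cancels by \cref{thm:kink-value} after applying an RI move.

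The main obstacle is the bookkeeping of relabelings and sign changes, especially tracking how the substitution $\beta \to \beta + \mu$ combines with $b \to bm$ in the shape ratios and how $\mu \to -\mu$ interacts with the sign $\epsilon$ in the log-parameter formulas of \cref{thm:our-flattening}. The underlying algebra is elementary once one writes out the cases; the reduction of case (iii) to case (ii) via RII and RI is the cleanest way to avoid duplicating work.
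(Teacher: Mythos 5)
Your proposal is correct and takes essentially the same route as the paper, whose entire proof is the one-line observation that it suffices to check crossing-by-crossing using \cref{def:crossing-volume} --- exactly your reduction, just with the cases written out. One small bookkeeping slip: under the \(180^\circ\) rotation in the doubly-reversed case the regions \(W\) and \(E\) swap rather than staying fixed (your own computation, new \(z_N^0=\) old \(z_S^0\), shows the analogous swap for \(N,S\)), but since \(\tau_W\) and \(\tau_E\) enter \eqref{eq:crossing-volume} with the same coefficient and the same orientation \(-\epsilon\), this does not affect the conclusion.
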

\begin{proof}
  It suffices to check this crossing-by-crossing, which is elementary given \cref{def:crossing-volume}.
\end{proof}

On the other hand, changing the ambient orientation of our link should replace \(\volf\) with its complex conjugate.
This can also be represented diagrammatically:
\begin{definition}
  To take the \defemph{mirror image} of a flattened diagram, we
  \begin{itemize}
    \item reverse all the crossings,
    \item reverse the orientation of every component,
    \item replace all the shapes with their inverses,
    \item change each log-meridian \(\mu\) to \(-\mu\), 
    \item change each segment log-parameter \(\beta\) to \(\beta + \mu\), and
    \item change each region log-parameter \(\gamma\) to \(-\gamma\).
  \end{itemize}
\end{definition}
It is not obvious that this still gives a shaping of the new diagram, but once we have the right formula it is easy to check.

\begin{proposition}
  \label{thm:orientation-dep}
  Let \((\extr L, \rho, \mathfrak{s})\) be a link exterior in \(S^3\) represented by a flattened shaped diagram.
  Then
  \begin{enumerate}
    \item the mirror image of this diagram represents the same link exterior \((\overline{\extr{L}}, \rho, \mathfrak{s})\) but with the opposite orientation, and
    \item the volume changes by complex conjugation
  \[
    \volf(\extr L, \rho, \mathfrak{s}) = \overline{\volf(\extr L, \rho, \mathfrak{s})}.\qedhere
  \]
  \end{enumerate}
\end{proposition}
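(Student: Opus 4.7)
My plan is to split the proposition into a combinatorial verification (part 1) and a direct crossing-wise computation of dilogarithm identities (part 2). Once part 1 is established, $\volf$ for the mirror diagram is \emph{a priori} a volume invariant of a well-defined triple $(\overline{\extr L}, \rho, \mathfrak{s})$, so part 2 becomes a statement purely about how the sum \eqref{eq:diagram-volume} transforms under the listed algebraic substitutions.

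For part (1), I would first check that the mirror transformation yields a legal shaping. At each positive crossing of $D$, the mirror diagram has a negative crossing whose four segments carry the inverse shapes $\chi_i^{-1} = (a_i^{-1}, b_i m_i, m_i^{-1})$; substituting these into the formulas for $B^{-1}$ in \cref{def:braiding} and comparing with the relation $B(\chi_1, \chi_2) = (\chi_{2'}, \chi_{1'})$ at the original crossing is a short algebraic check. Topologically, reversing every crossing corresponds to reflecting $S^3$, producing a diagram for $\overline{\extr L}$. The simultaneous reversal of strand orientations cancels with the shape inversion by \cref{def:inverse-shape}, so the holonomy representation remains $\rho$. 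Finally, the transformations $\mu \mapsto -\mu$, $\beta \mapsto \beta + \mu$, and $\gamma \mapsto -\gamma$ are exactly the rules that translate the log-decoration formula \eqref{eq:log-longitude-sum} into its analog for $\overline{\extr L}$, where the induced boundary orientation has flipped so that a meridian $\mer$ of $\extr L$ corresponds to $-\mer$ on $\overline{\extr L}$; hence the induced log-decoration is again $\mathfrak{s}$.

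For part (2), by additivity in \eqref{eq:diagram-volume} it suffices to prove the identity one crossing at a time. At a positive crossing, substituting the mirror transformation into \cref{thm:our-flattening} yields new log-parameters $\tilde\zeta^k_j$ for the four tetrahedra; the crossing sign flips to $-\epsilon$, the individual tetrahedron signs flip, and the underlying shapes $z^k_j$ become their inverses. The required local identity is then
\[
  \dilr\bigl(\tilde\zeta^0_j, \tilde\zeta^1_j\bigr)
  \equiv
  -\,\overline{\dilr(\zeta^0_j, \zeta^1_j)}
  \pmod{2\pi^2 i \ZZ}
\]
for each $j \in \set{N, W, S, E}$, after which the prefactor $-i(-\epsilon) = \overline{-i\epsilon}$ in \eqref{eq:crossing-volume} supplies the remaining conjugation. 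This identity follows from $\overline{R(z)} = R(\bar z)$ (since $\dil$ has real Taylor coefficients), the inversion relation between $R(z)$ and $R(1/z)$, and careful bookkeeping of the correction terms from \cref{lemma:flattening-dependence}.

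The main obstacle will be the sign and branch-cut bookkeeping in part (2), since the mirror transformation simultaneously alters the crossing sign, all four tetrahedron orientations and shapes, and every flattening parameter; a sign or branch error in any one of these would destroy the identity. To manage it I would first reduce to the non-pinched case via \cref{lemma:volf-continuous-pinched}, where every $\dilr$ is evaluated at regular points and $R$ is analytic, and handle the pinched limit afterward by continuity. I would then verify the local identity explicitly for a single positive crossing with generic real shapes before deducing the general case by analytic continuation within $\mathfrak{F}(D)$.
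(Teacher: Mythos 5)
Your overall strategy is the same as the paper's: part (1) is the standard fact that mirroring a diagram realizes the orientation reversal of \(S^3\), checked to be compatible with the shapes, flattenings, and induced log-decoration, and part (2) is an elementary crossing-by-crossing verification. The difficulty is that the local identity you propose for part (2) is inconsistent with your own prefactor bookkeeping. You flip the crossing sign to \(-\epsilon\) and absorb the flipped tetrahedron signs into the prefactor \(-i(-\epsilon) = \overline{-i\epsilon}\); for the product to equal \(\overline{\volf(c,\mathfrak f)}\) you then need the bracketed alternating sum in \eqref{eq:crossing-volume} to be replaced by its complex conjugate, i.e.\ you need \(\dilr(\tilde\zeta^0_j,\tilde\zeta^1_j) \equiv +\overline{\dilr(\zeta^0_j,\zeta^1_j)}\) term by term. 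With the extra minus sign you wrote, the bracket becomes \(-\overline{[\cdots]}\) and the crossing volume comes out as \(-\overline{\volf(c,\mathfrak f)}\). A sanity check on real parts exposes this immediately: under your bookkeeping each tetrahedron's contribution to \(\Re\volf\) is \(\tilde\epsilon_j\Im\dilr(\tilde\zeta_j) = (-\epsilon_j)\,\Im\bigl(-\overline{\dilr(\zeta_j)}\bigr) = -\epsilon_j\Im\dilr(\zeta_j)\), so the mirror image would have hyperbolic volume \(-\vol\) rather than \(+\vol\).

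The source of the confusion is the assumption that mirroring inverts the stored shape of each tetrahedron while keeping the labels \(N,W,S,E\) fixed. If that were literally true, the geometric shape \((z^0_j)^{\epsilon_j}\) would be \emph{unchanged} while the sign flipped, again forcing the volume to negate; what actually happens is that reversing the crossing and the strand orientations permutes the four tetrahedra among the slots of the alternating sum, exchanging the \(+\) and \(-\) positions, and the geometric shapes are conjugated. So you must either prove \(\dilr(\tilde\zeta_j)\equiv\overline{\dilr(\zeta_j)}\) with a label-preserving correspondence, or exhibit the permutation explicitly and keep your minus sign --- but not both at once. The remainder of the plan (reducing to non-pinched crossings via \cref{lemma:volf-continuous-pinched}, using \(\overline{R(z)}=R(\bar z)\), and extending from a verified slice by the identity theorem, since the discrepancy is anti-holomorphic in the flattening parameters) is sound, though real shape parameters give flat tetrahedra, so it is cleaner to verify the corrected local identity on an open set of nondegenerate shapes directly.
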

\begin{proof}
  The first claim is well-known: taking the mirror image of the diagram corresponds to applying an orientation-reversing homeomorphism to \(S^3\).
  The more complicated definition we give is so that the flattening on the mirror-image diagram corresponds to the same representation and log-decoration as on the original manifold.
  The second is another elementary check: we can confirm using \cref{def:crossing-volume} that the volume of the mirror image of a crossing is the conjugate of the original volume, and the general case follows.
\end{proof}

\section{Simple examples: solid tori and lens spaces}
Before giving the details of the general definition of the complex volume of a manifold we discuss a few simple examples.
The complex volume of a solid torus is important to know because these are the manifolds added during Dehn filling.
Once we understand solid tori it is simple to compute the complex volumes of lens spaces.
This computation also demonstrates how the dependence on the log-decorations drops out when Dehn filling, as discussed in general in the next section.

\subsection{Solid tori}
We now compute the complex volume of a solid torus as added during Dehn filling.
It turns out that its volume is \(2\pi\) times the complex length of the geodesic core of the torus, where the \defemph{complex length} of a geodesic is
\[
  \operatorname{len}_{\CC}(\mathfrak{g}) =
  \operatorname{length}(\mathfrak{g})
  +i
  \operatorname{torsion}(\mathfrak{g})
  \pmod{2\pi i \ZZ}.
\]
\begin{marginfigure}
  \centering
  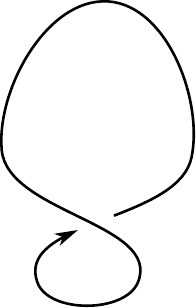
  \caption{A shaped diagram of the unknot that can be used to express \emph{any} log-decoration.}
  \label{fig:unknot-complement-diagram}
\end{marginfigure}

\begin{proposition}
  \label{thm:solid-torus-volume}
  Let \(\storus = D^2 \times S^1\) be a solid torus with meridian and longitude \(\mer , \lon \in \homol{\partial \storus; \ZZ}\); recall that \(\mer\)  is null-homologous in \(\storus\).
  Suppose that \(\rho : \pi_1(\storus) \to \slg\) is given by
  \[
    \rho(\lon)
    =
    \begin{pmatrix}
      a & -(a-m) b \\
      (a - 1/m)/b & m + m^{-1} -a
    \end{pmatrix}.
  \]
  Let \(\mathfrak{t}\) be a log-decoration for \((\storus, \rho)\).
  Then
  \begin{equation}
    \label{eq:solid-torus-volume}
    \volf(\storus, \rho)(\mathfrak{t})
    \equiv
    -4\pi^2 i\mathfrak{t}(\mer)\mathfrak{t}(\lon)
    \pmod{2\pi^2 i \ZZ}
    .
    \qedhere
  \end{equation}
\end{proposition}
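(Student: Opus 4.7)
The plan is to identify the solid torus \(W\) with the exterior \(\extr{L} \subset S^{3}\) of an unknot \(L\), so that \(\volf(W, \rho)(\mathfrak{t})\) can be computed from a diagram of \(L\) via the machinery of the previous section. Under this identification the roles of meridian and longitude swap: the meridian \(\mer\) of \(W\) bounds a disk in \(W\) and thus corresponds to the longitude \(\lon_L\) of the unknot (which bounds a disk in \(\extr{L}\)), while the longitude \(\lon\) of \(W\) corresponds to the meridian \(\mer_L\). In particular \(\rho(\mer) = \rho(\lon_L)\) is trivial, so any decoration has \(\delta(\mer) = 1\) and hence \(\mathfrak{t}(\mer) \in \ZZ\); likewise \(\mathfrak{t}(\lon)\) is a logarithm of \(m\), so \(\mathfrak{t}(\lon) - \mu \in \ZZ\) for any reference choice \(\mu\).

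For the base case I would use the round unknot with a single segment carrying shape \((a, b, m)\). By \cref{thm:shape-paper-results} this realizes the representation stated in the proposition, and by \eqref{eq:log-longitude-sum} the induced log-decoration on \(\extr{L}\) is \(\mathfrak{s}(\lon_L) = 0\) and \(\mathfrak{s}(\mer_L) = \mu\), which corresponds to \(\mathfrak{t}_0(\mer) = 0\) and \(\mathfrak{t}_0(\lon) = \mu\). Since this diagram has no crossings, \(\volf(W, \rho)(\mathfrak{t}_0) = 0\), in agreement with the target \(-4\pi^{2} i \cdot 0 \cdot \mu = 0\).

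For a general log-decoration \(\mathfrak{t}\) I would apply \cref{thm:diagram-flattening-dependence} with base \(\mathfrak{t}_0\). With \(\Delta \mu = \mathfrak{t}(\mer) \in \ZZ\) and \(\Delta \lambda = \mathfrak{t}(\lon) - \mu \in \ZZ\), the dependence formula gives
\[
\volf(W, \rho)(\mathfrak{t})
\equiv 4\pi^{2} i \bigl( \Delta \lambda \cdot 0 - \Delta \mu \cdot \mu \bigr)
= -4\pi^{2} i\, \mathfrak{t}(\mer)\, \mu \pmod{2\pi^{2} i \ZZ}.
\]
Comparing with the claimed formula \(-4\pi^{2} i\, \mathfrak{t}(\mer)\, \mathfrak{t}(\lon)\), the discrepancy is \(4\pi^{2} i\, \mathfrak{t}(\mer)\, \Delta \lambda\), which lies in \(4\pi^{2} i \ZZ \subset 2\pi^{2} i \ZZ\) because both factors are integers.

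The main obstacle is essentially bookkeeping: correctly tracking the swap of meridian and longitude when passing between \(W\) and the unknot exterior, and recognising that the resulting integrality of \(\mathfrak{t}(\mer)\) and \(\Delta \lambda\) is precisely what makes the quadratic discrepancy disappear modulo \(2\pi^{2} i\). Once these are in place the result follows immediately from the trivial base case and the transformation rule \eqref{eq:cvol-s-dependence}.
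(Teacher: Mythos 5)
Your overall strategy---identify \(\storus\) with the unknot exterior, compute \(\volf\) for one convenient diagram, and propagate to all log-decorations using the transformation rule---is reasonable and close in spirit to the paper's, and your preliminary observations (\(\mathfrak{t}(\mer)\in\ZZ\) because \(\rho(\mer)\) is trivial, \(\mathfrak{t}(\lon)\) a logarithm of \(m\)) are correct. But the propagation step is exactly where all the content of the proposition lives, and as written it contains an unjustified sign choice. The formula \eqref{eq:cvol-s-dependence} of \cref{thm:diagram-flattening-dependence} is antisymmetric under exchanging the roles of meridian and longitude, and it is stated for the \emph{link's} meridian--longitude pair \((\mer_L,\lon_L)\) (this is forced by its proof: the \(\mu_j\) there are logarithms of the meridian eigenvalues \(m_j\) attached to diagram components). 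For the solid torus these roles are swapped, and you silently apply the formula with \(\storus\)'s pair \((\mer,\lon)\) instead. Applied with the unknot's conventions, the base point \(\mathfrak{s}_0(\mer_L)=\mu\), \(\mathfrak{s}_0(\lon_L)=0\) and increments \(\Delta\lambda_L=\mathfrak{t}(\mer)\), \(\Delta\mu_L=\mathfrak{t}(\lon)-\mu\) give \(+4\pi^2 i\,\mathfrak{t}(\mer)\,\mu\) rather than your \(-4\pi^2 i\,\mathfrak{t}(\mer)\,\mu\); the two differ by \(8\pi^2 i\,\mathfrak{t}(\mer)\,\mu\), which does not lie in \(2\pi^2 i\ZZ\) for generic \(\mu\). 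So your derivation lands on the stated answer only because of the swap, and you would need to justify reading the dependence formula with \(\storus\)'s homological meridian rather than the link's.

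There is also a smaller issue: the crossingless round unknot only realizes log-decorations with \(\mathfrak{t}(\mer)=0\), so to reach \(\mathfrak{t}(\mer)=k\neq 0\) you must read \cref{thm:diagram-flattening-dependence} as a statement about the function \(\volf(L,\rho)\) on all of \(\logrepv(L,\rho)\), not merely about flattenings of one fixed diagram (the paper does use this reading in the RI-invariance proof, so it is available, but it should be made explicit). The paper's own proof avoids both problems at once: it uses the one-crossing unknot diagram (the closure of the kink in \cref{fig:kink}), whose induced log-decoration \(\mathfrak{t}(\lon)=\mu\), \(\mathfrak{t}(\mer)=\beta'-\beta-\mu\) is already completely general, and reads the answer off directly from the explicit crossing computation in \cref{thm:kink-value}, with no appeal to the transformation rule at all. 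I would restructure your argument along those lines.
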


\begin{proof}
  Think of \(\storus\) as being the exterior of the unknot \(\unknot\).
  Because \cref{fig:unknot-complement-diagram} is the closure of \cref{fig:kink} we can use the same shaping and flattening, as long as we choose \(\beta'' = \beta\).
  These choices define a log-decoration on the unknot complement with
  \begin{align*}
    \mathfrak{t}(\lon) &= \mu
    \\
    \mathfrak{t}(\mer) &= \beta' - \beta - \mu
  \end{align*}
  where \(\lon\) is the meridian of the knot and \(\mer\) is its longitude because \(\storus\) is \(\extr \unknot\) inside-out.
  Now we can apply \cref{thm:kink-value}.
\end{proof}

\begin{proposition}
  Suppose we are using \((\storus, \rho, \mathfrak{t})\) to fill in a boundary component of some other manifold \((M, \rho_0, \mathfrak{s})\).
   For a \((p,q)\) filling this implies that
   \[
     p \mathfrak{s}(\mer^M) + q \mathfrak{s}(\lon^M) = k
   \]
   for some \(k \in \ZZ\), where \(\mer^M\) and \(\lon^M\) are the meridian and longitude of the boundary component in question.
   Then when the log-decoration \(\mathfrak{t}\) on the filling torus is chosen to be compatible as in \cref{eq:boundary-log-dehn-filling}, which in this case means that
   \[
     p \mathfrak{s}(\mer^M) + q \mathfrak{s}(\lon^M) = k = \mathfrak{t}(\mer^W)
   \]
   the volume
   \[
     \cvol(\storus, \rho)(\mathfrak{t}) = 2 \pi k \operatorname{len}_\CC(\lon^\storus)
   \]
   is a multiple of the complex length of the core \(\lon^{\storus}\)of \(\storus\).
\end{proposition}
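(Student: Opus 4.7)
The plan is to specialize the formula from the preceding proposition to the compatible log-decoration $\mathfrak{t}$ and then translate the resulting expression into the language of complex length. The argument is essentially a one-line computation once conventions are aligned.

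First, I invoke \cref{thm:solid-torus-volume} to write
\[
  \cvol(\storus, \rho)(\mathfrak{t}) \equiv -4\pi^2 i\, \mathfrak{t}(\mer^\storus)\, \mathfrak{t}(\lon^\storus) \pmod{2\pi^2 i \ZZ}.
\]
The compatibility condition \eqref{eq:boundary-log-dehn-filling} applied to the $(p,q)$ Dehn filling identifies $\mathfrak{t}(\mer^\storus)$ with $p\mathfrak{s}(\mer^M) + q\mathfrak{s}(\lon^M) = k$, so substituting yields
\[
  \cvol(\storus, \rho)(\mathfrak{t}) \equiv -4\pi^2 i k\, \mathfrak{t}(\lon^\storus) \pmod{2\pi^2 i \ZZ}.
\]
It then remains to identify $-2\pi i\, \mathfrak{t}(\lon^\storus)$ with $\operatorname{len}_\CC(\lon^\storus)$. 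By the definition of log-decoration, $e^{2\pi i\, \mathfrak{t}(\lon^\storus)}$ is an eigenvalue of $\rho(\lon^\storus)$, and since $\lon^\storus$ is the core geodesic of the filling solid torus, this eigenvalue is determined by $\operatorname{len}_\CC(\lon^\storus)$ via the standard formula for eigenvalues of a hyperbolic element of $\slg$. Taking logarithms with the appropriate sign convention and substituting into the previous display gives the claimed identity, and since $k \in \ZZ$ the result is an integer multiple of $2\pi \operatorname{len}_\CC(\lon^\storus)$.

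The main obstacle is the bookkeeping of normalization and sign conventions relating the log-eigenvalue $\mathfrak{t}(\lon^\storus)$ to the complex length $\operatorname{len}_\CC(\lon^\storus)$, together with checking that the torsion ambiguity of $\operatorname{len}_\CC$ modulo $2\pi i \ZZ$ is absorbed compatibly into the volume's ambiguity modulo $2\pi^2 i \ZZ$. The latter works because $2\pi k$ times any element of $2\pi i \ZZ$ lies in $4\pi^2 i k \ZZ \subseteq 2\pi^2 i \ZZ$, so the equation is well-defined modulo $2\pi^2 i \ZZ$ regardless of the choice of representative of $\operatorname{len}_\CC$. Once these conventions are fixed, no further geometric input is needed beyond \cref{thm:solid-torus-volume} and the definitions.
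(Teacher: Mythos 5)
Your proposal is correct and follows essentially the same route as the paper: apply \cref{thm:solid-torus-volume}, substitute \(\mathfrak{t}(\mer^\storus) = k\) from the compatibility condition, and identify \(-2\pi i\,\mathfrak{t}(\lon^\storus)\) with \(\operatorname{len}_\CC(\lon^\storus)\). The only cosmetic difference is that the paper makes the last step explicit by writing \(\mathfrak{t}(\lon^\storus) = r\mu + s\lambda\) for a completion \(ps - qr = 1\) of the filling slope and citing \cite[Lemma 4.2]{Neumann1985} for the complex-length identity, whereas you sketch it via the eigenvalue of the core holonomy; both amount to the same convention check.
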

\begin{proof}
  We write \(\mer^M, \lon^M\) and \(\mer^\storus, \lon^{\storus}\) for the meridian and longitude of \(M\) and \(\storus\), respectively, and abbreviate
  \[
  \mathfrak{s}(\mer^M) = \mu \text{ and } \mathfrak{s}(\lon^M) = \lambda.
  \]
  In a \(p,q\) Dehn filling we are identifying \(\mer^\storus\) with \(p \mer^M + q \lon^M\).
  If we choose integers \(r,s\) with
  \[
    ps - qr =
    \begin{vmatrix}
      p & q \\
      r & s
    \end{vmatrix}
    =
    1
  \]
  then \((p\mer^M+ q \lon^M, r\mer^M+ s \lon^M)\) is a positively-oriented basis for the first homology of the boundary torus.
  We conclude that \(\lon^\storus\) will be identified with \(r \mer^\storus + s \lon^\storus\), which motivates the compatibility condition \eqref{eq:boundary-log-dehn-filling}.
  Since
  \[
     \mathfrak{t}(\mer^\storus) = k
  \]
  and
  \[
    \mathfrak{t}(\lon^{\storus}) = r \mu + s \lambda
  \]
  by \eqref{eq:solid-torus-volume} we have
  \[
    \cvol(\storus, \rho)(\mathfrak{t}) = -4\pi^2 i k(r \mu + s \lambda).
  \]
  Because \(- 2\pi i (r \mu + s \lambda) = \operatorname{len}_{\CC}(\lon^\storus)\) \cite[Lemma 4.2]{Neumann1985} this proves the claim.
  (Our \(\mu, \lambda\) differ from theirs by a factor of \(2\pi i\).)
\end{proof}

\subsection{Lens spaces}
We now compute the complex volumes of representations of the lens spaces \(\lens{-p,q}\); since \(\pi_1(\lens{-p,q}) = \ZZ/p\) there are not very many of these to consider.
While they are geometrically quite simple (in particular, \(\operatorname{Re} \cvol\) vanishes) they illustrate how our gluing results work in practice.

\begin{definition}
  Let \(p,q\) be coprime integers.
  The \(-p,q\) lens space \(\lens{-p,q}\) is the manifold obtained by \(p/q\) surgery on the unknot.
\end{definition}

\begin{proposition}
  Up to conjugacy every \(\rho : \pi_1(\lens{-p,q}) \to \slg\) is equivalent to
  \begin{equation}
    \rho_n(x) = 
    \begin{pmatrix}
      \omega_p^{n} & 0 \\
      0 & \omega_p^{-n}
    \end{pmatrix}
    \text{ for }
    0 \le n \le \lfloor p/2 \rfloor
  \end{equation}
  where \(\omega_p = \exp(2\pi i/p)\) and \(x\) is a generator of \(\pi_1(\lens{-p,q})\).
\end{proposition}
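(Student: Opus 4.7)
The plan is to use the fact that $\pi_1(\lens{-p,q}) \cong \ZZ/p$ is cyclic, so a representation $\rho$ is completely determined by the matrix $\rho(x) \in \slg$ assigned to a generator $x$, subject only to the relation $\rho(x)^p = I$. Classifying $\rho$ up to conjugacy therefore reduces to classifying elements of order dividing $p$ in $\slg$ up to conjugation.

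First I would split into cases based on the Jordan form of $\rho(x)$. If $\rho(x)$ is diagonalizable, then up to conjugation we may take $\rho(x) = \operatorname{diag}(\lambda, \lambda^{-1})$ for some $\lambda \in \CC^\times$. The constraint $\rho(x)^p = I$ forces $\lambda^p = 1$, so $\lambda = \omega_p^n$ for some integer $n$, giving $\rho = \rho_n$. If $\rho(x)$ is not diagonalizable, then its two eigenvalues coincide and multiply to $1$, so they are both $\pm 1$; $\rho(x)$ is then conjugate to $\pm\left(\begin{smallmatrix} 1 & 1 \\ 0 & 1 \end{smallmatrix}\right)$, which has infinite order and cannot satisfy $\rho(x)^p = I$. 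So this case is vacuous and every $\rho$ is conjugate to some $\rho_n$.

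Next I would address the indexing range $0 \le n \le \lfloor p/2 \rfloor$. Since $\omega_p^n$ and $\omega_p^{-n} = \omega_p^{p-n}$ give the same matrix up to swapping the diagonal entries, conjugating by $w = \left(\begin{smallmatrix} 0 & 1 \\ -1 & 0 \end{smallmatrix}\right) \in \slg$ identifies $\rho_n$ with $\rho_{p-n}$. Combined with the obvious periodicity $\rho_{n+p} = \rho_n$, this shows that every $\rho_n$ is conjugate to exactly one $\rho_{n'}$ with $0 \le n' \le \lfloor p/2 \rfloor$.

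The only residual point to address is that distinct $n$ in this range give non-conjugate representations, which follows because $\rho_n(x)$ has trace $\omega_p^n + \omega_p^{-n} = 2\cos(2\pi n/p)$, and this function is injective on $\{0, 1, \dots, \lfloor p/2 \rfloor\}$. No step here is a serious obstacle; the only mild subtlety is remembering to rule out the non-diagonalizable case via the order constraint, which otherwise might seem to contribute extra conjugacy classes when $\lambda = \pm 1$.
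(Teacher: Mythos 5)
Your argument is correct: since $\pi_1(\lens{-p,q}) \cong \ZZ/p$ the classification reduces to conjugacy classes of elements of order dividing $p$ in $\slg$, and you correctly rule out the non-diagonalizable (parabolic) case and handle the Weyl symmetry $n \mapsto p-n$ that justifies the range $0 \le n \le \lfloor p/2 \rfloor$. The paper states this proposition without proof (treating it as standard), and your write-up is exactly the expected argument.
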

\begin{theorem}
  \begin{equation}
    \cvol(\lens{-p,q}, \rho_n) 
    \equiv
    4\pi^2 i \frac{n^2r}{p}.
    \pmod{2\pi^2i \ZZ}
  \end{equation}
  where \(r\) is any integer with \(qr \equiv -1 \pmod p\).
\end{theorem}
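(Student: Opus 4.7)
The plan is to exhibit $\lens{-p,q}$ as the union of two solid tori (its standard Heegaard decomposition) and then apply \cref{thm:gluing} combined with \cref{thm:solid-torus-volume}. By definition $\lens{-p,q}$ is $p/q$-Dehn filling on the unknot, so it decomposes as $M_1 \cup_\phi \storus$ with $M_1 = \extr{\unknot}$ and $\storus$ the attached solid torus. Fix $r, s \in \ZZ$ with $ps - qr = 1$, which forces $qr \equiv -1 \pmod p$; the gluing identifies $\mer^{\storus} = p\mer^{M_1} + q\lon^{M_1}$ and $\lon^{\storus} = r\mer^{M_1} + s\lon^{M_1}$.

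Next I would pin down log-decorations on both sides. Because $\pi_1(\extr{\unknot}) = \ZZ$ is generated by $\mer^{M_1}$ and $\lon^{M_1}$ is null-homotopic in $\extr{\unknot}$, $\rho_n|_{M_1}$ sends $\mer^{M_1}$ to $\operatorname{diag}(\omega_p^n, \omega_p^{-n})$ and $\lon^{M_1}$ to the identity. Take $\mathfrak{s}(\mer^{M_1}) = n/p$ and $\mathfrak{s}(\lon^{M_1}) = 0$. The compatibility condition \eqref{eq:boundary-log-dehn-filling} then forces the log-decoration on the filling torus to satisfy $\mathfrak{t}(\mer^{\storus}) = n$ and $\mathfrak{t}(\lon^{\storus}) = rn/p$.

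Finally I would apply \cref{thm:gluing} and \cref{thm:solid-torus-volume} piece by piece. A subtlety: viewing $\extr{\unknot}$ as an abstract solid torus reverses the roles of meridian and longitude, because the curve null-homologous inside $\extr{\unknot}$ is the knot longitude $\lon^{M_1}$. Thus $\cvol(M_1, \rho_n|_{M_1})(\mathfrak{s}) = -4\pi^2 i \cdot \mathfrak{s}(\lon^{M_1}) \cdot \mathfrak{s}(\mer^{M_1}) = 0$, while the filling torus contributes $\pm 4\pi^2 i \cdot \mathfrak{t}(\mer^{\storus}) \cdot \mathfrak{t}(\lon^{\storus}) = \pm 4\pi^2 i n^2 r/p$, with sign determined by the orientation convention to match the claim. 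The answer is well-defined modulo $2\pi^2 i \ZZ$ since replacing $r$ by $r + p$ changes it by $4\pi^2 i n^2 \in 2\pi^2 i \cdot 2\ZZ$.

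The main obstacle is bookkeeping: tracking the dual perspectives on $\extr{\unknot}$ (as a knot exterior versus as an abstract solid torus) and the orientation conventions governing signs in the gluing formula. Once those are straight, the theorem follows immediately from the two propositions already established in the paper.
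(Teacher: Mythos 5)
Your proposal is correct and follows essentially the same route as the paper: decompose $\lens{-p,q}$ as $\extr{\unknot}$ glued to a solid torus, pick compatible log-decorations, and add the two solid-torus volumes from \cref{thm:solid-torus-volume}. The only difference is that you fix the convenient log-decoration $\mathfrak{s}(\lon^{M_1})=0$ and appeal to log-decoration independence, whereas the paper carries general integer shifts $k,l$ through the computation precisely to exhibit that independence explicitly (a choice it notes is optional), so your version is the $k=l=0$ specialization.
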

This agrees with \cite[Theorem 5.1]{Kirk1990}; we slightly extend their result to say that \[\volf(\lens{-p,q}, \rho_0) = 0,\] which makes sense because \(\rho_0\) is the trivial representation.
On the other hand, our method only computes the invariant modulo \(2\pi^2 i\), not \(4\pi^2i\), which is a general phenomenon when computing Chern-Simons invariants using the dilogarithm.

We compute the volume by thinking of \(\rho_n\) as a representation of the unknot exterior and then gluing in a solid torus.
Topologically this is just gluing two solid tori together, but thinking of it this way helps keep the meridians and longitudes straight.

Consider the diagram of the unknot in \cref{fig:unknot-complement-diagram}.
A shaping with holonomy conjugate to \(\rho\) is given by setting
\[
  \begin{gathered}
    \chi = (\omega_p^n, 1, \omega_p^n)
    \\
    \chi' = (\omega_p^{-n}, \omega_p^n, \omega_p^n)
  \end{gathered}
\]
To choose a flattening we need to choose logarithms \(\beta\) of \(1\), \(\beta'\) of \(\omega_p^n\), and \(\mu\) of \(\omega_p^n\).
(Strictly speaking we also need to choose logarithms \(\gamma\) associated to the regions of the diagram, but these do not affect the computation so we ignore them.)
We set
\begin{equation}
    \beta = (k-l),
    \quad
    \beta' = \frac{1}{p},
    \quad
    \mu = \frac{n}{p} + 2\pi i k
\end{equation}
for \(k,l \in \ZZ\).%
\note{
  We could set \(k = l = 0\), but by choosing them more generally we can explicitly see how gluing in a torus eliminates the log-decoration dependence.
}
The corresponding log-decoration \(\mathfrak{s}\) has
\begin{equation}
  \mathfrak{s}(\mer) = \mu = \frac{n}{p} + k,
  \quad
  \mathfrak{s}(\lon) = \lambda = \beta' - \beta - \mu = l
\end{equation}
where \(\lon\) is the zero-framed longitude.
Since there is only a single pinched crossing in our diagram, using \cref{thm:kink-value} we compute that
\[
  \volf(\unknot, \rho_n)
  = 4\pi^2 i \mu \lambda
  = 4\pi^2 i \left[ kl + \frac{nl}{p} \right]
  \equiv 4\pi^2 i\frac{nl}{p}.
  \pmod{2\pi^2 i \ZZ}
\]
Now let \( \storus\) be the solid torus we are attaching to \(\extr \unknot\).
Given \(r,s\) with \(ps - qr =1\), a compatible log-decoration \(\mathfrak{t}\) on \(\storus\) has
\[
  \begin{gathered}
    \mathfrak{t}(\mer^\storus) = p \mu + q \lambda = n + pk + ql
    \\
    \mathfrak{s}(\lon^{\storus}) = r\mu + s \lambda = nr/p + rk + sl
  \end{gathered}
\]
so by \cref{thm:solid-torus-volume} its volume is
\[
  \volf(\storus)
  = 
  4 \pi^2 i (n + pk + ql)(nr/p + rk + sl)
  \equiv
  4\pi^2 i \frac{n^2 r + nrql}{p}.
  \pmod{2\pi^2 i \ZZ}
\]
Because \(qr \equiv - 1 \pmod{p\ZZ}\) we see that
\[
  \volf(\storus)
  \equiv
  4\pi^2 i \frac{n^2 r - nl}{p}.
  \pmod{2\pi^2 i \ZZ}
\]
Adding this to \(\volf(\unknot, \rho_n)\) gives
\[
  \cvol(\lens{-p,q}, \rho_n)
  \equiv
  2\pi^2 i \frac{n^2 r}{p}
\]
as claimed.
In particular, we see that the dependence on the log-decoration \(\mathfrak{s}\) (that is, on \(l\)) has dropped out.

\section{The complex volume of a manifold}

In this section we show how to define the complex volume of a general manifold via surgery presentations and conclude the proof of our main results.

\subsection{Generalized surgery diagrams}
We first discuss our preferred representations of manifolds.
\begin{definition}
  A \defemph{generalized surgery presentation} is a link \(L_0\) in \(S^3\) whose components are labeled by one of
  \begin{enumerate}
    \item a rational number \(p/q\),
    \item the symbol \(\infty\), or
    \item nothing.
  \end{enumerate}
  Let \(L\) be the sublink of \(L_0\) consisting of components labeled \(\infty\) and \(L_{\partial}\) the sublink containing unlabeled components.
  Let \(M\) be the manifold \(M\) obtained by Dehn filling each rationally-labeled component according to its coefficient, then removing an open regular neighborhood of \(L_{\partial}\).
  \(M\) is a compact oriented manifold with one torus boundary component for each component of \(L_{\partial}\).,
  It contains an embedded link corresponding to \(L\), which by abuse of notation we also denote \(L\).
  We say that \(L_0\) \defemph{presents} the pair \((M, L)\).
  A representation \(\rho_0 : \pi_1(S^3 \setminus L_0) \to \slg\) is \defemph{compatible} with the presentation if for every component \(K\) of \(L_0\) labeled \(p/q\) we have
      \begin{equation}
        \label{eq:rho-dehn-condition}
        \rho(\mer)^p \rho(\lon)^q = 1
      \end{equation}
  where \(\mer\) and \(\lon\) are a meridian and longitude of \(K\).
  (Here, as usual, we assume \(p/q\) is in lowest terms.)
  In this case \(\rho_0\) induces a representation \(\rho : \pi_1(M \setminus L) \to \slg\) on the filled manifold and we say that \((L_0, \rho_0)\) \defemph{presents} the triple \((M, L, \rho).\)
  Typically we only consider \(\rho\) up to conjugacy (for example, we have not specified a basepoint).
\end{definition}

\begin{theorem}
  Every such \((M, L, \rho)\) is represented by some \((L_0, \rho_0)\).
\end{theorem}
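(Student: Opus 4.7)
The plan is to construct the presentation in two stages: first build a topological presentation of $(M, L)$ by a decorated link $L_0 \subset S^3$, then pull back $\rho$ along the natural surjection of fundamental groups.

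For the topological part, I would first dispose of boundary components. Choose a meridian for each torus boundary component of $M$ and Dehn fill along them to obtain a closed oriented $3$-manifold $\hat M$ containing $L$ together with the cores $L_\partial'$ of the filling solid tori. So it suffices to show that any closed oriented $3$-manifold with an embedded link inside admits a surgery presentation in which the given link appears as a sublink labeled $\infty$ (later, components of $L_\partial'$ will be relabeled as unlabeled to get $M$ back). By Lickorish--Wallace there exists a framed link $L_{sur} \subset S^3$ such that $\hat M$ is obtained from $S^3$ by integer surgery on $L_{sur}$; we can also allow rational labels, at the cost of some flexibility. After identifying $M\setminus \nu(L_{sur}) \subset \hat M$, use general position and an outward isotopy from the cores of the added solid tori to move $L \cup L_\partial'$ entirely into $M \setminus \nu(L_{sur})$, which embeds into $S^3$. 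We then set $L_0 \defeq L_{sur} \amalg L \amalg L_\partial'$ inside $S^3$, label components of $L_{sur}$ by their surgery coefficients, components of $L$ by $\infty$, and leave components of $L_\partial'$ unlabeled. Re-performing the construction in the definition of presentation, the rational surgeries recover $\hat M$, the $\infty$-fills produce solid-torus neighborhoods of $L$, and removing open neighborhoods of $L_\partial'$ produces precisely $M$.

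For the representation, observe that with the above construction $M \setminus L$ is obtained from $S^3 \setminus L_0$ by attaching a $2$-handle along each surgery slope $p_j\mer_j + q_j\lon_j$ (for the rationally-labeled components) and a $3$-handle to cap each resulting sphere. By van Kampen the induced map
\[
  \pi_1(S^3 \setminus L_0) \longrightarrow \pi_1(M \setminus L)
\]
is surjective with kernel normally generated by $\{\mer_j^{p_j} \lon_j^{q_j}\}_j$. Composing $\rho$ with this surjection yields $\rho_0 \colon \pi_1(S^3 \setminus L_0) \to \slg$, and the compatibility condition \eqref{eq:rho-dehn-condition} holds tautologically because $\rho_0$ factors through the quotient killing the surgery slopes.

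The main obstacle is the step that isotopes $L \cup L_\partial'$ off the surgery link $L_{sur}$ so that the whole arrangement sits in $S^3$ rather than only in $\hat M$. The standard way is to start from any surgery presentation of $\hat M$, realize the auxiliary link by tubes and bands in the diagram, and then apply Kirby moves (handle slides over components of $L_{sur}$, plus $\pm1$ blow-ups/blow-downs if rational labels need to be simplified) to clean the diagram; once this is done the remaining verifications are bookkeeping. Everything else — the reduction to the closed case, the construction of $\rho_0$ from $\rho$, and the check of \eqref{eq:rho-dehn-condition} — is formal.
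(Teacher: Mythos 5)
Your proof is correct and follows the same route as the paper, which simply cites the standard fact that any $(M,L)$ is presented by surgery on some link in $S^3$ and observes that the representation pulls back; you have merely filled in the standard details (Lickorish--Wallace, isotoping the auxiliary link into the surgery-link complement, and the van Kampen computation showing $\pi_1(M\setminus L)$ is the quotient of $\pi_1(S^3\setminus L_0)$ by the surgery slopes). No gaps.
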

\begin{proof}
  It is a standard result that any \((M,L)\) is represented by surgery along some \(L_0\).
  Adding the representations does not affect this.
\end{proof}

In practice, we need to represent these using shape coordinates, which we can do at the cost of conjugating the representation:

\begin{proposition}
  Let \((L_0, \rho_0)\) be a presentation of \((M, L, \rho)\), and let \(D\) be any diagram of \(L_0\).
  Then there is a representation \(\rho'\) conjugate to \(\rho\) and a presentation \((L_0, \rho_0')\) of \((M, L, \rho')\) for which there exists a shaping \(\chi\) of \(D\) with holonomy \(\rho_0'\).
\end{proposition}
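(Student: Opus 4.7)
The plan is to reduce the statement directly to part 2 of \cref{thm:shape-paper-results}, which already guarantees that every representation of a link complement in $S^3$ is conjugate to one arising as the holonomy of a shaping of any chosen diagram. The only thing that needs checking is that the conjugation does not destroy compatibility with the surgery coefficients.

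First I would apply \cref{thm:shape-paper-results}(2) to the representation $\rho_0 : \pi_1(S^3 \setminus L_0) \to \slg$ and the diagram $D$, obtaining a shaping $\chi$ of $D$ whose holonomy $\rho_0'$ is conjugate to $\rho_0$ by some $g \in \slg$, i.e.\ $\rho_0'(x) = g \rho_0(x) g^{-1}$ for all $x \in \pi_1(S^3 \setminus L_0)$.

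Next I would verify the Dehn-filling compatibility condition \eqref{eq:rho-dehn-condition} for $\rho_0'$. For each component $K$ of $L_0$ labeled by $p/q$ with meridian $\mer$ and longitude $\lon$, we have
\[
  \rho_0'(\mer)^{p} \rho_0'(\lon)^{q}
  = g \rho_0(\mer)^{p} \rho_0(\lon)^{q} g^{-1}
  = g \cdot 1 \cdot g^{-1} = 1,
\]
so $\rho_0'$ is compatible with the labeling. Hence $(L_0, \rho_0')$ is a generalized surgery presentation, and it defines an induced representation $\rho' : \pi_1(M \setminus L) \to \slg$ on the filled manifold.

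Finally I would observe that the construction of the induced representation is functorial under conjugation: the quotient map $\pi_1(S^3 \setminus L_0) \to \pi_1(M \setminus L)$ used to define $\rho$ and $\rho'$ is the same, so the identity $\rho_0' = g \rho_0 g^{-1}$ descends to $\rho' = g \rho g^{-1}$ in the quotient. Thus $\rho'$ is conjugate to $\rho$ and $(L_0, \rho_0')$ presents $(M, L, \rho')$, as required. There is no real obstacle here; the entire argument is bookkeeping once \cref{thm:shape-paper-results}(2) is invoked, since both the Dehn-filling relations and the conjugacy class of the induced representation are preserved by the conjugation already built into that theorem.
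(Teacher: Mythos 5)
Your proof is correct and takes the same route as the paper, which simply cites part 2 of \cref{thm:shape-paper-results}; your extra verification that conjugation preserves the Dehn-filling relations \eqref{eq:rho-dehn-condition} and descends to the induced representation on the filled manifold is routine bookkeeping the paper leaves implicit.
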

\begin{proof}
  Apply part 2 of \cref{thm:shape-paper-results}.
\end{proof}

\subsection{Dehn filling}
We are now ready to given the general definition of \(\cvol\) and prove \cref{thm:cvol-for-manifolds-with-boundary,thm:connect-sum}.
We have already shown that \(\volf\) is an invariant of link exteriors, but it depends on the log-decorations chosen on the boundary components.
When we fill these in we need to make sure that the dependence on the filled components drops out.
We also need to check that \(\cvol\) does not depend on the chosen surgery presentation, which concludes the proof of \cref{thm:cvol-for-manifolds-with-boundary}.
Once this is done it is easy to prove \cref{thm:connect-sum}.

\begin{definition}
  For a link \(L\) in a \(3\)-manifold \(M\) we say a representation \(\rho : \pi_1(M \setminus L) \to \slg\) is \defemph{parabolic along \(L\)} if \(\tr \rho (\mer_j) = \pm 2\) for a meridian \(\mer_j\) of each component \(L_j\) of \(L\).
  This includes the case where \(\rho(\mer_j) = \pm 1\), in which case we say \defemph{trivial along \(L_j\)}.
\end{definition}

\begin{definition}
  \label{def:cvol}
  Let \(M\) be a compact oriented manifold, \(L\) a link in \(M\), and \(\rho : \pi_1(M \setminus L) \to \slg\) a representation.
  Let \((L_0, \rho_0)\) be a generalized surgery presentation of \((M, L, \rho)\) in which components \(L_1, \dots, L_h\) are \(p_k,q_k\) Dehn filled.
  Write \(\mer_k, \lon_k\) for the meridian and longitude of \(L_k\).
  Then for any log-decoration \(\mathfrak{s}\) the complex volume is given by
  \[
    \cvol(M, L, \rho)(\mathfrak{s})
    =
    \volf(L_0, \rho_0)(\mathfrak{s}_0)
    +
    \sum_{k=1}^{h} \volf(\storus, \rho_k)(\mathfrak{t}_k)
  \]
  where the representations are compatible with the Dehn filling as in \eqref{eq:rho-dehn-condition}:
  \[
    \rho_k(\mer_k)^{p_k} \rho_k(\lon_k)^{q_k} = 1
  \]
  and the log-decorations are compatible with the Dehn filling as in \eqref{eq:boundary-log-dehn-filling}:
  \begin{align*}
    p_k \mathfrak{s}_k(\mer_k)
    +
    q_k \mathfrak{s}_k(\lon_k)
    &=
    \mathfrak{t}_k(\mer_k^{\storus}).
  \end{align*}
\end{definition}

\begin{lemma}
  \label{lemma:log-decoration-indep-dehn}
  \(\cvol(M, L, \rho)\) does not depend on the log-decorations chosen on the Dehn-filled components of \(L_0\).
\end{lemma}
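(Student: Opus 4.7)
The plan is to reduce to changing the log-decoration on a single Dehn-filled component at a time. Because Theorem~\ref{thm:diagram-flattening-dependence} and the compatibility conditions of \cref{def:cvol} are local to each boundary component of $\extr{L_0}$, it suffices to show invariance when $\mathfrak{s}_0$ is altered only on one filled component, say $L_k$ with surgery slope $p_k/q_k$. Two log-decorations on $L_k$ compatible with $\rho_0$ must exponentiate to the same decoration $\delta_0$, so they differ by an integer vector $(\Delta\mu_k, \Delta\lambda_k) \in \ZZ^2$ in a meridian-longitude basis. By the compatibility \eqref{eq:boundary-log-dehn-filling}, the log-decoration $\mathfrak{t}_k$ on the filling solid torus then shifts by the corresponding integer vector $(\Delta a_k, \Delta b_k) = (p_k \Delta\mu_k + q_k\Delta\lambda_k,\ r_k \Delta\mu_k + s_k\Delta\lambda_k)$, where $(r_k,s_k)$ completes $(p_k,q_k)$ to a matrix in $\mathrm{SL}_2(\ZZ)$.

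Writing $\mu_k=\mathfrak{s}_0(\mer_k)$, $\lambda_k=\mathfrak{s}_0(\lon_k)$, $a_k=\mathfrak{t}_k(\mer_k^\storus)$, $b_k=\mathfrak{t}_k(\lon_k^\storus)$, Theorem~\ref{thm:diagram-flattening-dependence} gives
\[
  \Delta \volf(L_0, \rho_0) \;\equiv\; 4\pi^2 i(\Delta\lambda_k\, \mu_k - \Delta\mu_k\, \lambda_k) \pmod{2\pi^2 i \ZZ},
\]
and expanding \cref{eq:solid-torus-volume} for the shifted arguments produces a bilinear correction in $(a_k,b_k)$ and $(\Delta a_k, \Delta b_k)$. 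The key identity is the symplectic compatibility
\[
  a_k\Delta b_k - b_k\Delta a_k \;=\; (p_k s_k - q_k r_k)(\mu_k\Delta\lambda_k - \lambda_k\Delta\mu_k) \;=\; \mu_k\Delta\lambda_k - \lambda_k\Delta\mu_k,
\]
which is immediate from $p_k s_k - q_k r_k = 1$ and says that the matrix $\begin{psmallmatrix}p_k&q_k\\r_k&s_k\end{psmallmatrix}$ preserves the standard symplectic form. Plugging this identity into the combined sum $\Delta\volf(L_0,\rho_0) + \Delta\volf(\storus_k,\rho_k)$ cancels the non-integer contributions of $\mu_k, \lambda_k, a_k, b_k$, leaving a residual purely bilinear in the integer shifts $\Delta a_k, \Delta b_k$ together with the integer $a_k = \mathfrak{t}_k(\mer_k^\storus) \in \ZZ$ (the latter forced by $\rho_k(\mer_k^\storus)=1$). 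This residual lives in $4\pi^2 i\,\ZZ \subset 2\pi^2 i\,\ZZ$ and therefore vanishes in $\CC/2\pi^2 i\ZZ$.

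Iterating over the components of $L$ that are Dehn-filled, and using that changes on distinct components contribute additively and independently in both Theorem~\ref{thm:diagram-flattening-dependence} and the solid-torus sum, finishes the proof.

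The main obstacle is the bookkeeping of the modular arithmetic: each individual term is only defined modulo $2\pi^2 i \ZZ$, so one must verify that the symplectic cancellation really produces a quantity in $2\pi^2 i \ZZ$ rather than in some coarser lattice. The crucial inputs that make this work are the $\mathrm{SL}_2(\ZZ)$-invariance of the symplectic pairing and the integrality $\mathfrak{t}_k(\mer_k^\storus) \in \ZZ$ coming from $\rho_k(\mer_k^\storus)=1$; given these, the cancellation is a routine bilinear computation and no analytic input is needed.
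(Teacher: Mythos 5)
Your proposal is correct and follows essentially the same route as the paper: reduce to one filled component, compute the change in $\volf(L_0,\rho_0)$ via \cref{thm:diagram-flattening-dependence} and the change in $\volf(\storus,\rho_k)$ via the bilinear formula \eqref{eq:solid-torus-volume}, and cancel them using the $\mathrm{SL}_2(\ZZ)$-invariance of the symplectic pairing (the paper packages this same fact as an "elementary number theory" reparametrization of the integer shift $(\Delta\mu_k,\Delta\lambda_k)$ by $(a,b)$ so that $\mathfrak{t}$ shifts by $(b,-a)$). If anything, your version is slightly more explicit than the paper's about the one non-formal input --- that the leftover bilinear residual lands in $2\pi^2 i\ZZ$ only because $\mathfrak{t}_k(\mer_k^{\storus})\in\ZZ$, which is forced by the surgery compatibility condition.
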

\begin{proof}
  For simplicity, focus on a single component.
  Write \(\mathfrak{s}_0\) for the log-decoration on \(\extr L\) we are filling.
  Any other log-decoration \(\mathfrak{s}_0'\) has
  \[
    \begin{aligned}
      \mathfrak{s}_0'(\mer) &= \mathfrak{s}_0(\mer) + 2\pi i k 
      \\
      \mathfrak{s}_0'(\lon) &= \mathfrak{s}_0(\lon) + 2\pi i l
    \end{aligned}
    \quad
    \text{ for }
    k,l \in \ZZ.
  \]
  Because of our condition 
  \begin{align*}
    \mathfrak{t}(\mer^{\storus}) &= p \mathfrak{s}_0(\mer) + q \mathfrak{s}_0(\lon)
  \end{align*}
  on the log-decoration on the filling torus \(\storus\) there are \(r,s \in \ZZ\) with \(ps - qr = 1\) and
  \begin{align*}
    \mathfrak{t}(\lon^{\storus}) &= r \mathfrak{s}_0(\mer) + s \mathfrak{s}_0(\lon).
  \end{align*}
  By elementary number theory, we can use these to rewrite \(k,l\) as
  \[
    \begin{aligned}
      \mathfrak{s}_0'(\mer) &= \mathfrak{s}_0(\mer) + 2\pi i (aq + bs)
      \\
      \mathfrak{s}_0'(\lon) &= \mathfrak{s}_0(\lon) - 2\pi i (ap + br) 
    \end{aligned}
    \quad
    \text{ for }
    a,b \in \ZZ.
  \]
  From \cref{thm:diagram-flattening-dependence} the change \(\volf(L_0, \rho_0)(\mathfrak{s}_0') - \volf(L_0, \rho_0)(\mathfrak{s}_0)\) is
  \begin{align*}
    \Delta \volf_{\extr{L_0}}
    &=
    4\pi^2 i \left[ (ap + br)\mathfrak{s}_0(\mer) + (aq + bs) \mathfrak{s}_0(\lon) \right].
    \\
    &=
    4\pi^2 i a \mathfrak{t}(\mer^{\storus})
    +
    4\pi^2 i b \mathfrak{t}(\lon^{\storus}).
  \end{align*}
  On the other hand, after changing \(\mathfrak{s}\) we have to change \(\mathfrak{t}\) to match \(\mathfrak{s}'\).
  The new values are
  \begin{align*}
    \mathfrak{t}'(\mer^{\storus})
    &= p \mathfrak{s}_0'(\mer) + q \mathfrak{s}_0'(\lon)
    \\
    &= p \mathfrak{s}_0(\mer) + q \mathfrak{s}_0(\lon)
    + b
    \\
    \mathfrak{t} '(\lon^{\storus})
    &= r \mathfrak{s}_0'(\mer) + s \mathfrak{s}_0'(\lon)
    \\
    &= p \mathfrak{s}_0(\mer) + q \mathfrak{s}_0(\lon)
    - a
  \end{align*}
  where we again used \(ps -qr =1\), so the change in the volume is
  \begin{align*}
    \Delta \volf_{W}
    =
    -4\pi^2 i a \mathfrak{t}(\mer^{\storus})
    -
    4 \pi^2 i  b \mathfrak{t}(\lon^{\storus})
  \end{align*}
  These cancel as claimed.
\end{proof}

\begin{lemma}
  \label{lemma:log-decoration-indep-parabolic}
  \(\cvol(M, L, \rho)\) does not depend on the log-decorations chosen on the boundary-parabolic components of \(L_0\) representing the cusps \(L\).
\end{lemma}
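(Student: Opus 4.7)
The plan is to apply the transformation rule \eqref{eq:cvol-s-dependence} of \cref{thm:diagram-flattening-dependence} and exploit the fact that parabolicity pins the log-decoration values on a cusp to $\tfrac{1}{2}\ZZ$, so that the resulting variation lands in $2\pi^2 i \ZZ$.

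First I would observe that of the two summands in \cref{def:cvol}, only $\volf(L_0, \rho_0)(\mathfrak{s}_0)$ depends on the log-decoration at a cusp component: the solid-torus contributions $\volf(\storus, \rho_k)(\mathfrak{t}_k)$ only involve the Dehn-filled components and their own log-decorations $\mathfrak{t}_k$. So the task reduces to showing that $\volf(L_0, \rho_0)$ is invariant mod $2\pi^2 i \ZZ$ under changes of $\mathfrak{s}_0$ supported on a single cusp component $L_j$, with integer shifts $\Delta \mu_j, \Delta \lambda_j \in \ZZ$.

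Next I would apply \eqref{eq:cvol-s-dependence} directly, which gives
\[
    \volf(L_0, \rho_0)(\mathfrak{s}_0') - \volf(L_0, \rho_0)(\mathfrak{s}_0) \equiv 4\pi^2 i \left(\Delta \lambda_j \mathfrak{s}(\mer_j) - \Delta \mu_j \mathfrak{s}(\lon_j)\right) \pmod{2\pi^2 i \ZZ}.
\]
The key input is parabolicity: since $L_j$ is a cusp and $\rho_0$ is non-trivially parabolic there, $\rho_0|_{\pi_1(T_j)}$ lands in a parabolic subgroup of $\slg$ conjugate to $\set{\pm I}$ times the unipotent upper-triangular matrices. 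Both $\rho_0(\mer_j)$ and $\rho_0(\lon_j)$ thus have eigenvalue $\pm 1$, so $\delta(\mer_j), \delta(\lon_j) \in \set{\pm 1}$, and by \eqref{eq:boundary-log-compatibility} both $\mathfrak{s}(\mer_j), \mathfrak{s}(\lon_j) \in \tfrac{1}{2}\ZZ$. Each summand is then of the form $4\pi^2 i \cdot (\text{integer}) \cdot (\text{half-integer}) \in 2\pi^2 i \ZZ$, as needed.

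The only subtlety, and the main obstacle, is the degenerate case $\rho_0(\mer_j) = \pm I$, where the longitude eigenvalue is unconstrained and the half-integer conclusion on $\mathfrak{s}(\lon_j)$ can fail. I would treat this via the trivial-component reduction of part (4) of \cref{thm:cvol-for-manifolds-with-boundary}, which deletes such an $L_j$ from $L$ entirely and thereby removes the need to pick any log-decoration there at all. Proving that reduction independently of the present lemma is the real work; I expect it to follow from a continuity/limit argument using \cref{lemma:volf-continuous-pinched} showing that a strand carrying only $\pm I$ holonomy can be capped off (via the RI-type argument of \cref{thm:kink-value}) without changing $\volf$. Everything else in the lemma is bookkeeping.
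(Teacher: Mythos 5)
Your proposal matches the paper's proof: the paper likewise applies the transformation rule \eqref{eq:cvol-s-dependence} and observes that on a boundary-parabolic component both \(\mathfrak{s}(\mer)\) and \(\mathfrak{s}(\lon)\) lie in \(\tfrac{1}{2}\ZZ\), so the variation lands in \(2\pi^2 i \ZZ\). Your extra caveat about components where \(\rho(\mer_j) = \pm I\) is a reasonable point of care that the paper's one-line proof glosses over, but the paper does dispose of that case separately via \cref{lemma:trivial-volume}, exactly as you propose.
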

\begin{proof}
  On any such component \(\mathfrak{s}(\mer)\) and \(\mathfrak{s}(\lon)\)  are elements of \(\frac{1}{2} \ZZ\), so picking a different log-decoration changes \(\volf\) by 
  \[
    -4\pi^2 i \left[ \Delta \lambda \mathfrak{s}(\mer) - \Delta \mu \mathfrak{s}(\lon) \right]
    \in 2 \pi^2 i \ZZ.\qedhere
  \]
\end{proof}

\begin{lemma}
  \label{lemma:trivial-volume}
  Let \(L'\) be the link obtained from \(L\) by removing all the components along which \(\rho\) is trivial.
  Then
  \[
  \cvol(M, L, \rho) = \cvol(M, L', \rho)
  \]
  for every log-decoration.
\end{lemma}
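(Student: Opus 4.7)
The plan is to exhibit $(M, L, \rho)$ and $(M, L', \rho)$ via a common underlying link diagram and reduce the equality to the vanishing of a single trivial solid-torus contribution.

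Fix a component $K \subset L$ along which $\rho$ is trivial, and focus on the case $\rho(\mer_K) = 1$; the case $\rho(\mer_K) = -1$ differs only by the spin-structure ambiguity pointed out after \cref{thm:cvol-simple}. Choose a generalized surgery presentation $(L_0, \rho_0)$ of $(M, L, \rho)$ in which $K$ is an $\infty$-labeled component of $L_0$. Because $\rho_0(\mer_K) = 1$, the compatibility condition \eqref{eq:rho-dehn-condition} for a $1/0$ Dehn filling on $K$ is automatic, so relabeling $K$ from $\infty$ to $1/0$ yields a valid generalized surgery presentation of $(M, L', \rho)$: the trivial Dehn filling caps off the neighborhood of $K$ without altering $M$ or any other component, while demoting $K$ from a marked component of $L$ to an unmarked curve.

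Apply \cref{def:cvol} to both presentations using a common extension $\mathfrak{s}_0$ of $\mathfrak{s}$ to $\partial \extr{L_0}$ with $\mathfrak{s}_0(\mer_K) = 0$. Then $\volf(L_0, \rho_0)(\mathfrak{s}_0)$ and the Dehn-filling contributions at every other component agree term-by-term, so
\[
  \cvol(M, L', \rho)(\mathfrak{s}) - \cvol(M, L, \rho)(\mathfrak{s})
  \equiv
  \volf(\storus, \rho_K)(\mathfrak{t}_K)
  \pmod{2\pi^2 i \ZZ},
\]
where $\storus$ is the new filling torus at $K$. Compatibility \eqref{eq:boundary-log-dehn-filling} specialized to $p/q = 1/0$ forces $\mathfrak{t}_K(\mer^{\storus}) = \mathfrak{s}_0(\mer_K) = 0$, and then \cref{thm:solid-torus-volume} makes the solid-torus term vanish outright. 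By \cref{lemma:log-decoration-indep-dehn}, this difference does not depend on the chosen extension to $K$, so the equality established at the preferred $\mathfrak{s}_0(\mer_K) = 0$ is not an artifact of that particular choice.

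The main obstacle is the bookkeeping between the two presentations --- one has to verify that only the labeling at $K$ changes, so that the formula of \cref{def:cvol} matches on both sides except for the single new solid-torus term. Once that is pinned down, the triviality of $\rho(\mer_K)$ immediately kills the remaining contribution via \cref{thm:solid-torus-volume}; the mild subtlety for $\rho(\mer_K) = -1$ is handled by the standard spin-structure convention, which affects $\cvol$ only modulo the $\pi^2 i \ZZ$ ambiguity inherent to forgetting the $\slg$ lift.
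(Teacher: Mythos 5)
Your argument is correct for components with $\rho(\mer_K) = +1$, and it takes a genuinely different route from the paper's. The paper works entirely at the level of the diagram: every segment of a trivial-holonomy component carries the shape $(\pm 1, 1, \pm 1)$, every crossing involving such a segment is pinched, and the pinched-crossing formula \eqref{eq:crossing-volume-pinched} then evaluates to $0 \pmod{2\pi^2 i \ZZ}$, so the component can literally be erased from the diagram without changing $\volf$. You instead keep $K$ in the surgery link and reinterpret the cusp label as a meridian (trivial) Dehn filling, reducing the lemma to the solid-torus formula of \cref{thm:solid-torus-volume} together with \cref{lemma:log-decoration-indep-dehn}. That is cleaner in that it avoids any computation with \eqref{eq:crossing-volume-pinched} and makes transparent why the contribution vanishes: the filling torus has $\mathfrak{t}_K(\mer^{\storus}) = 0$. (One cosmetic clash: the paper reserves the label $1/0 = \infty$ for cusps, so you should say ``fill along the meridian'' rather than ``relabel by the rational number $1/0$''; the formulas of \cref{def:cvol} do apply verbatim with $(p,q) = (1,0)$ and $(r,s) = (0,1)$.)

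The gap is the case $\rho(\mer_K) = -1$, which the paper's definition of ``trivial along $L_j$'' explicitly includes and which its own proof handles on equal footing via the shape $(-1,1,-1)$. There the meridian filling is not compatible with $\rho$ in the sense of \eqref{eq:rho-dehn-condition}, since $\rho(\mer_K)^{1} \neq 1$, so your relabeled link is not a valid presentation of anything; and even proceeding formally, a compatible log-decoration has $\mathfrak{t}_K(\mer^{\storus}) = \mathfrak{s}_0(\mer_K) \in \tfrac{1}{2} + \ZZ$, so the solid-torus term is $-2\pi^2 i\,(2n+1)\,\mathfrak{s}_0(\lon_K)$, which need not lie in $2\pi^2 i \ZZ$ because $\rho(\lon_K)$ is unconstrained when $\rho(\mer_K) = -I$ is central. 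Writing the discrepancy off as ``the $\pi^2 i$ ambiguity of forgetting the $\slg$ lift'' gives up exactly the refinement the lemma asserts, namely equality in $\CC/2\pi^2 i \ZZ$. (One can debate whether the $-1$ case is even well-posed, since $\rho$ then does not descend to $\pi_1(M \setminus L')$; but the paper clearly intends it, and its diagrammatic proof covers both signs at once.) To close the gap you would need to treat $\rho(\mer_K) = -1$ separately — essentially by the paper's pinched-crossing computation — or argue that only the $+1$ case arises where the lemma is invoked.
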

\begin{proof}
  Let \(K\) be one of the removed components.
  Any shaping of any diagram of \(L\) assigns the shape \((\pm 1,1,\pm 1)\) to every segment of \(K\).
  It is straightforward to check that any crossing involving the shape \((\pm 1,1, \pm 1)\) is pinched and its volume is \(0 \pmod{2\pi^2 i}\).
\end{proof}

\begin{figure}
\begingroup%
  \makeatletter%
  \providecommand\color[2][]{%
    \errmessage{(Inkscape) Color is used for the text in Inkscape, but the package 'color.sty' is not loaded}%
    \renewcommand\color[2][]{}%
  }%
  \providecommand\transparent[1]{%
    \errmessage{(Inkscape) Transparency is used (non-zero) for the text in Inkscape, but the package 'transparent.sty' is not loaded}%
    \renewcommand\transparent[1]{}%
  }%
  \providecommand\rotatebox[2]{#2}%
  \newcommand*\fsize{\dimexpr\f@size pt\relax}%
  \newcommand*\lineheight[1]{\fontsize{\fsize}{#1\fsize}\selectfont}%
  \ifx\svgwidth\undefined%
    \setlength{\unitlength}{311.25002289bp}%
    \ifx\svgscale\undefined%
      \relax%
    \else%
      \setlength{\unitlength}{\unitlength * \real{\svgscale}}%
    \fi%
  \else%
    \setlength{\unitlength}{\svgwidth}%
  \fi%
  \global\let\svgwidth\undefined%
  \global\let\svgscale\undefined%
  \makeatother%
  \begin{picture}(1,0.29740708)%
    \lineheight{1}%
    \setlength\tabcolsep{0pt}%
    \put(0,0){\includegraphics[width=\unitlength,page=1]{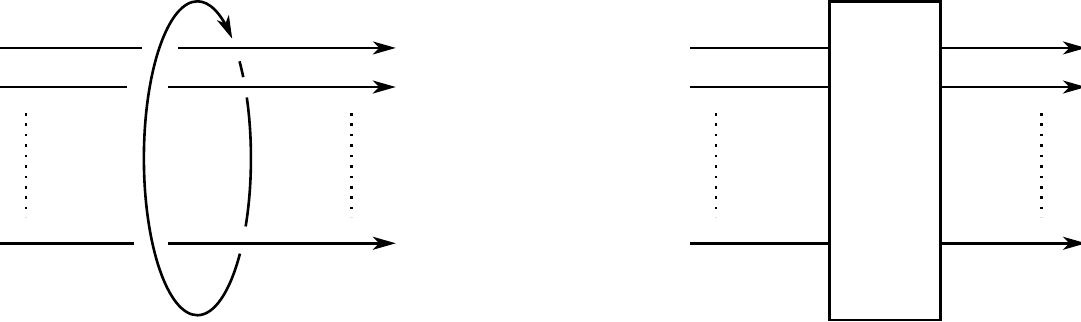}}%
    \put(0.79893684,0.14334394){\makebox(0,0)[lt]{\lineheight{1.25}\smash{\begin{tabular}[t]{l}$\mp 1$\end{tabular}}}}%
    \put(0.2441124,0.13479818){\makebox(0,0)[lt]{\lineheight{1.25}\smash{\begin{tabular}[t]{l}$\pm 1$\end{tabular}}}}%
  \end{picture}%
\endgroup%

  \caption{
    Showing that \(\cvol\) does not depend on the choice of surgery presentation follows from showing that these diagrams have equal volumes when the flattenings are chosen so that the through-strands have matching log-decorations.
    The box represents a full twist of the indicated sign.
  }
  \label{fig:blow-up}
\end{figure}

\begin{proof}[Proof of \cref{thm:cvol-for-manifolds-with-boundary}]
  Present \((M, L, \rho)\) as surgery on \((L_0, \rho_0)\).
  By \cref{thm:diagram-flattening-dependence} \(\volf(L_0, \rho_0)(\mathfrak{s}_0)\) is well-defined and does not depend on the conjugacy class of \(\rho_0\).
  The same theorem shows that it depends on the values of \(\mathfrak{s}_0\) on the unfilled components of \(L_0\) as claimed.
  By \cref{lemma:log-decoration-indep-dehn,lemma:log-decoration-indep-parabolic} it does not depend on the values of \(\mathfrak{s}_0\) on the other (filled and cusp) components of \(L_0\) and by \cref{lemma:trivial-volume} we can remove trivial-holonomy components.
  Finally, the orientation dependence follows from \cref{thm:orientation-dep}.

  It remains only to check that \(\cvol(M, L, \rho)\) does not depend on the choice of presentation  \((L_0, \rho_0)\).
  As usual, it suffices to check that it is invariant under the blowing up/down move on \(L_0\) shown in \cref{fig:blow-up}.
  When there are no strands this follows from \cref{lemma:trivial-volume}: for any representation \(\rho\) compatible with the framing a \(\pm 1\)-framed, unlinked unknot is always meridian-trivial, so it contributes no volume.

  For the general case we have to rely on 3-2 moves, as in the proof of \cref{lemma:RIII-invariance}.
  When none of the crossings in the diagram are pinched, invariance holds because the triangulations associated to each side are related by 3-2 moves.
  When they are pinched we can again rely on continuity and take the limit.
\end{proof}

One advantage of our method is that the volume is obviously additive under connect sums:

\begin{proof}[Proof of \cref{thm:connect-sum}]
  If \(L_0^1\) and \(L_0^2\) are presentations of \(M_1\) and \(M_2\), then their disjoint union \(L_0^1 \amalg L_0^2\) is a presentation of \(M_1 \# M_2\).
  It is obvious that \(\volf\) is additive under disjoint union (i.e.\@ under tensor product) of link diagrams.
\end{proof}

\subsection{General torus gluing}
We can now prove \cref{thm:gluing} in general: the special case where \(M_2\) is a disjoint union of solid tori is \cref{def:cvol,lemma:log-decoration-indep-dehn}.

We first consider the case where we are identifying a single boundary component \(T\) of \(M\) with a boundary component \(T'\) of \(M_2\).
Let \(\mer, \lon\) and \(\mer', \lon'\) be meridian-longitude pairs for the components in question.
The homomorphism identifying them can always be described as an integral matrix
\begin{equation}
  \label{eq:torus-gluing-matrix}
  \begin{bmatrix}
    \mer' \\
    \lon'
  \end{bmatrix}
  =
  \begin{bmatrix}
    r &  s \\
    p &  q
  \end{bmatrix}
  \begin{bmatrix}
    \mer \\
    \lon
  \end{bmatrix}
\end{equation}
with determinant \(-1 = rq - ps\), which corresponds to the fact that the identifying homomorphism is orientation-reversing.
We want to represent this matrix in terms of continued fractions.

\begin{definition}
  For integers \(a_1, \dots, a_k\), consider the continued fraction
  \begin{equation}
    \label{eq:cf-def}
    \cf{a_1, \dots, a_k}
    \defeq
    a_1 - 
    \cfrac{1}{a_2- \cfrac{1}{ \ddots - \cfrac{1}{a_k}}}
  \end{equation}
  and the associated sequences given by
  \begin{equation}
    \label{eq:cf-assoc}
    \begin{aligned}
      p_0 &= 0, & p_1 &= -1, & p_{i+1} &= a_i p_i  - p_{i-1}
      \\
      q_0 &= 1, & q_1 &= 0, & q_{i+1} &= a_i q_i  - q_{i-1}
    \end{aligned}
  \end{equation}
\end{definition}

\begin{lemma}
  \begin{equation}
    \label{eq:cf-ratio}
    \frac{p_{k+1}}{q_{k+1}} = \cf{a_1, \dots, a_k}
  \end{equation}
  and
  \begin{equation}
    \label{eq:cf-det}
    \det
    \begin{bmatrix}
      p_{k} & q_k \\
      p_{k+1} & q_{k+1} 
    \end{bmatrix}
    = -1
  \end{equation}
  for \(k \ge 1\).
\end{lemma}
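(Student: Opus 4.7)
The plan is to bundle the two linear recursions into a single matrix identity, which lets me prove the determinant formula essentially for free and reduces the continued-fraction identity to a one-line inductive step.

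First I would introduce
\[
  A_i \defeq \begin{pmatrix} a_i & -1 \\ 1 & 0 \end{pmatrix}, \qquad \det A_i = 1,
\]
so that the recursions \eqref{eq:cf-assoc} are exactly
\[
  \begin{pmatrix} p_{i+1} & q_{i+1} \\ p_i & q_i \end{pmatrix}
  = A_i \begin{pmatrix} p_i & q_i \\ p_{i-1} & q_{i-1} \end{pmatrix}.
\]
Iterating down to the initial data gives
\[
  \begin{pmatrix} p_{k+1} & q_{k+1} \\ p_k & q_k \end{pmatrix}
  = A_k A_{k-1} \cdots A_1
  \begin{pmatrix} p_1 & q_1 \\ p_0 & q_0 \end{pmatrix}
  = A_k \cdots A_1 \begin{pmatrix} -1 & 0 \\ 0 & 1 \end{pmatrix}.
\]
Taking determinants then immediately yields \eqref{eq:cf-det} (each $\det A_i = 1$, and the initial matrix is $\pm 1$, so the sign is a short bookkeeping computation and can be read off at $k=1$, where $\det\bigl(\begin{smallmatrix} -1 & 0 \\ -a_1 & -1\end{smallmatrix}\bigr)$ is computed directly).

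For \eqref{eq:cf-ratio} I would induct on $k$. The base case $k=1$ is immediate from the explicit values $p_2 = -a_1$, $q_2 = -1$, giving $p_2/q_2 = a_1 = \cf{a_1}$. For the inductive step I would use the standard unfolding identity
\[
  \cf{a_1, \dots, a_k, a_{k+1}} = \cf{a_1, \dots, a_{k-1}, a_k - 1/a_{k+1}}
\]
and apply the inductive hypothesis to the right-hand side, with $a_k$ replaced by $a_k - 1/a_{k+1}$. Since $p_{k+1}$ and $q_{k+1}$ are linear in $a_k$, clearing denominators gives
\[
  \frac{(a_k - 1/a_{k+1}) p_k - p_{k-1}}{(a_k - 1/a_{k+1}) q_k - q_{k-1}}
  = \frac{a_{k+1} p_{k+1} - p_k}{a_{k+1} q_{k+1} - q_k}
  = \frac{p_{k+2}}{q_{k+2}},
\]
closing the induction.

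The only real subtlety is that dividing by $q_{k+1}$ requires $q_{k+1} \ne 0$, which can fail for specific integer choices of the $a_i$. The cleanest way around this is to treat the $a_i$ as formal indeterminates: both sides of \eqref{eq:cf-ratio} are then rational functions in $a_1, \dots, a_k$, and the inductive identity above shows they agree as rational functions, hence specialize correctly whenever the denominators are nonzero. The determinant identity \eqref{eq:cf-det} further ensures $(p_{k+1}, q_{k+1})$ is a coprime pair, so they never both vanish. I do not expect any serious obstacle here — this is classical continued-fraction arithmetic repackaged to match the sign conventions \eqref{eq:cf-assoc}, and the main care is just keeping track of the minus signs introduced by the nonstandard initial values $(p_1, q_1) = (-1, 0)$.
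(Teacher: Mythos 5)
Your overall strategy matches the paper's: the paper's proof is exactly ``use induction, via the unfolding identity $\cf{a_1,\dots,a_{k-1},a_k}=\cf{a_1,\dots,a_{k-1}-1/a_k}$,'' which is precisely your inductive step for \eqref{eq:cf-ratio}, and that part of your argument is correct (the rational-function framing to dodge $q_{k+1}=0$ is a sensible refinement the paper doesn't bother with). The matrix packaging of the recursion is a clean way to organize the determinant claim.

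However, the step you defer as ``short bookkeeping'' does not come out the way you assert, and this is a genuine gap. Your identity gives
\begin{equation*}
  \det\begin{pmatrix} p_{k+1} & q_{k+1} \\ p_k & q_k \end{pmatrix}
  = \Bigl(\textstyle\prod_i \det A_i\Bigr)\cdot\det\begin{pmatrix} -1 & 0 \\ 0 & 1\end{pmatrix} = -1,
\end{equation*}
but that is the determinant of the \emph{row-swapped} matrix; the matrix in \eqref{eq:cf-det} has $(p_k,q_k)$ on top, so its determinant is $+1$. Your proposed $k=1$ check confirms this: with $p_1=-1$, $q_1=0$, $p_2=-a_1$, $q_2=-1$ from \eqref{eq:cf-assoc}, one gets $\det\bigl(\begin{smallmatrix} -1 & 0 \\ -a_1 & -1\end{smallmatrix}\bigr)=(-1)(-1)-0=+1$, not $-1$, and since $p_kq_{k+1}-q_kp_{k+1}$ is invariant under the recursion the value is $+1$ for all $k\ge 1$. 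So as written your argument does not establish \eqref{eq:cf-det}; what it actually shows is that the statement is off by a sign relative to the initial data \eqref{eq:cf-assoc} (or that the rows in \eqref{eq:cf-det} are meant in the other order, which is what the application to the gluing matrix with $rq-ps=-1$ requires). The method is fine --- indeed it is sharp enough to expose the discrepancy --- but you need to actually carry out the sign computation and then either fix the row order or flag the inconsistency, rather than assert the claimed $-1$ follows.
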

\begin{proof}
  Use induction.
  For the first claim the identity \(\cf{a_1, \dots, a_{k-1}, a_k} = \cf{a_1, \dots, a_{k-1} - 1/a_k}\) is useful.
\end{proof}

Using the lemma, we can choose a continued fraction with
\begin{align*}
  \frac{p}{q}
  &=
  \frac{p_{k+1}}{q_{k+1}}
  =
  \cf{a_1, \dots, a_k}
  \\
  \frac{r}{s}
  &=
  \frac{p_{k}}{q_{k}}
  =
  \cf{a_1, \dots, a_{k-1}}
\end{align*}
Represent \((M_1, \rho_1)\) and \((M_2, \rho_2)\) with generalized surgery presentations \(L_0\) and \(L_0'\), respectively.
By abuse of notation we refer to the component of \(D\) representing \(T\) as \(T\), and similarly for \(D'\) and \(T'\).
We can pull out parts of \(T\) and \(T'\) to get a diagram of the form shown in \cref{fig:torus-gluing-general-diagram}.
For simplicity we have chosen \(L_0\) and \(L_0'\) to be disjoint, but this is not really necessary.

\begin{proposition}
  The link diagram in \cref{fig:torus-gluing-general-diagram} represents the manifold obtained by gluing \(T\) to \(T'\).
\end{proposition}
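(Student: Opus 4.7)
The plan is to realize the diagram in \cref{fig:torus-gluing-general-diagram} as the standard continued-fraction expansion of the torus-gluing specified by \eqref{eq:torus-gluing-matrix}, using the surgery-calculus interpretation of rational Dehn filling as a chain of integrally-framed unknots. Since both sides of the claimed identification are compact oriented $3$-manifolds, and their difference is supported near the boundary tori being identified, it suffices to check that the gluing homeomorphism $T\to T'$ realized by the chain of fillings has the same action on $\homol{T;\ZZ}$ as the matrix $\begin{bmatrix} r & s \\ p & q \end{bmatrix}$.

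First I would identify the basic building block. Inserting a single unknotted component with integer framing $a$ along the chain, between two consecutive torus boundaries, and then Dehn filling it, corresponds on the two adjacent meridian-longitude bases to a $\mathrm{GL}_2(\ZZ)$ change of basis
\[
  \begin{bmatrix} \mer_{\mathrm{out}} \\ \lon_{\mathrm{out}} \end{bmatrix}
  =
  \begin{bmatrix} 0 & -1 \\ 1 & a \end{bmatrix}
  \begin{bmatrix} \mer_{\mathrm{in}} \\ \lon_{\mathrm{in}} \end{bmatrix}
\]
(up to sign choices fixed by conventions on orientations and framings). This is a direct consequence of the attaching map of the filling solid torus, combined with the standard way the two adjacent cusps exchange meridian and longitude.

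Next I would compose these elementary matrices along the chain of $k$ unknots with framings $a_1, \dots, a_k$. An induction on $k$ using the recursion \eqref{eq:cf-assoc} shows that the product
\[
  \begin{bmatrix} 0 & -1 \\ 1 & a_k \end{bmatrix}
  \cdots
  \begin{bmatrix} 0 & -1 \\ 1 & a_1 \end{bmatrix}
  = \pm
  \begin{bmatrix} p_k & q_k \\ p_{k+1} & q_{k+1} \end{bmatrix}.
\]
By \eqref{eq:cf-ratio} the rows of the right-hand side have ratios $r/s$ and $p/q$, and by \eqref{eq:cf-det} the determinant is $-1$. After fixing signs the composed matrix is exactly \eqref{eq:torus-gluing-matrix}, which proves the proposition.

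The main obstacle is consistent bookkeeping of the meridian-longitude conventions: in which direction each intermediate unknot is oriented, which generator of each intermediate boundary torus is taken as meridian versus longitude, and whether the elementary matrices compose left-to-right or right-to-left. I would pin all of these down by verifying the base case $k=1$ directly — there the diagram reduces to a single rationally-framed unknot realizing $p/q = a_1$ Dehn filling between $T$ and $T'$, and the corresponding $2\times 2$ matrix is \(\begin{bmatrix} -1 & 0 \\ a_1 & -1 \end{bmatrix}\) or its sign variant — after which the induction is a purely mechanical matrix computation.
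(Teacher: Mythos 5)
Your proposal is correct and is essentially the paper's argument in different packaging: the paper tracks the meridian classes $\mer_i$ directly via the relations $\lon_i = \mer_{i+1}+\mer_{i-1}$ and $-a_i\mer_i+\lon_i=0$, which yield the recursion $\mer_{i+1}=a_i\mer_i-\mer_{i-1}$ and hence the matrix $\begin{bmatrix} p_k & q_k \\ p_{k+1} & q_{k+1}\end{bmatrix}$, while you encode each link of the chain as the elementary matrix $\begin{bmatrix} 0 & -1 \\ 1 & a\end{bmatrix}$ and compose — these are the same computation. Your reduction to the action on $\homol{T;\ZZ}$ is valid since a torus homeomorphism is determined up to isotopy by its induced map on first homology, and your plan to fix signs via the base case is exactly the bookkeeping the paper also has to do.
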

\begin{figure}
  \centering
  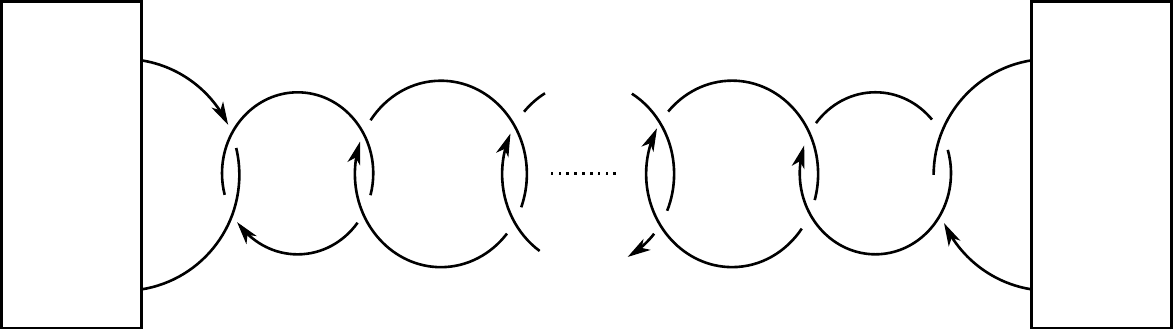
  \caption{
      Here \(L_0\) and \(L_0'\) are links presenting the manifolds \(M\) and \(M'\) and we have pulled out components representing the boundary components \(T\) and \(T'\) we are gluing.
      If we pick a continued fraction \(\cf{a_1, \dots, a_k}\) representing the homomorphism identifying \(T\) and \(T'\) then the link above represents the glued manifold.
    }
  \label{fig:torus-gluing-general-diagram}
\end{figure}
\begin{proof}
  This is a standard construction, as in \cite[Section 5.3]{Akbulut2016}.
  We call the \(0\)-framed unknot on the left \(U_0\), the  \(i\)th \(-a_i\)-framed
  \note{
    The right sign here is \(-a_i\), not \(a_i\).
    This has to do with the usual issue of keeping \(p/q\) and \(-p/q\) lens spaces straight.
  }
  unknot \(U_i\), and the \(0\)-framed unknot on the right \(U_{k+1}\), and write \((\mer_i, \lon_i)\) for their meridian-longitude pairs.
  Because the diagram in \cref{fig:torus-gluing-general-diagram} is relatively simple, it is not hard to work out the identifications
  \[
    \mer_1 = \mer \text{ and } \mer_k = \mer',
  \]
  and more generally
  \[
    \lon_i = \mer_{i+1} + \mer_{i-1}.
  \]
  (Alternately, these can be deduced from trying to compute the shaping in \cref{lemma:torus-gluing-general-shaping}.)

  Dehn filling along the \(0\)-framed components gives the relations
  \[
    \mer_0 = \lon \text{ and } \mer_{k+1} = \lon'
  \]
  When we Dehn fill according to the framings on the \(U_1, \dots, U_k\) we impose the relation
  \[
    -a_i \mer_i + \lon_i = 0, 1 \le i \le k
  \]
  which gives the recursive relation
  \begin{equation}
    \label{eq:meridian-recursion}
    \mer_{i+1} = a_i \mer_i - \mer_{i-1}
  \end{equation}
  in the glued manifold.
  We can now use the recurrence relation \cref{eq:cf-assoc} to conclude that
  \begin{align*}
    \mer_{i} = p_i \mer_i + q_{i} \mer_{i-1}
  \end{align*}
  for each \(i\).
  In particular, given the initial conditions
  \(
    \mer_0 = \lon, \mer_1 = \mer
  \)
  we get
  \begin{equation*}
    \lon'
    =
    \mer_{k+1}
    =
    p_{k+1} \mer_1 + q_{k+1} \mer_0
    =
    p_{k+1} \mer + q_{k+1} \lon
  \end{equation*}
  and similarly
  \begin{equation}
    \mer'
    =
    \mer_{k}
    =
    p_{k} \mer + q_{k} \lon.
  \end{equation}
  so we have identified the boundaries of \(T\) and \(T'\) according to \cref{eq:torus-gluing-matrix}.
\end{proof}

Following the notation in the above proof, we write \(m = \delta(\mer), \ell = \delta(\lon)\) for the meridian and longitude eigenvalues of \(T\), \(m', \ell'\) for \(T'\), and \(m_i\) for the meridian eigenvalues of the new components \(U_0, \dots, U_{k+1}\).
Our earlier identifications give \(m_0 = \ell, m_1 = m, m _{k} = m', m_{k+1} = \ell'\).

We now need to choose shapings on our combined diagram.
The simplest thing to do is choose them so that all the new crossings are pinched.
By conjugating \(\rho_2\) we can choose a shaping in which the \(b\)-variables of the exposed arcs both have the same value \(x\), and then we can assign \(b\)-variables as below:

\begin{figure*}
  \centering
  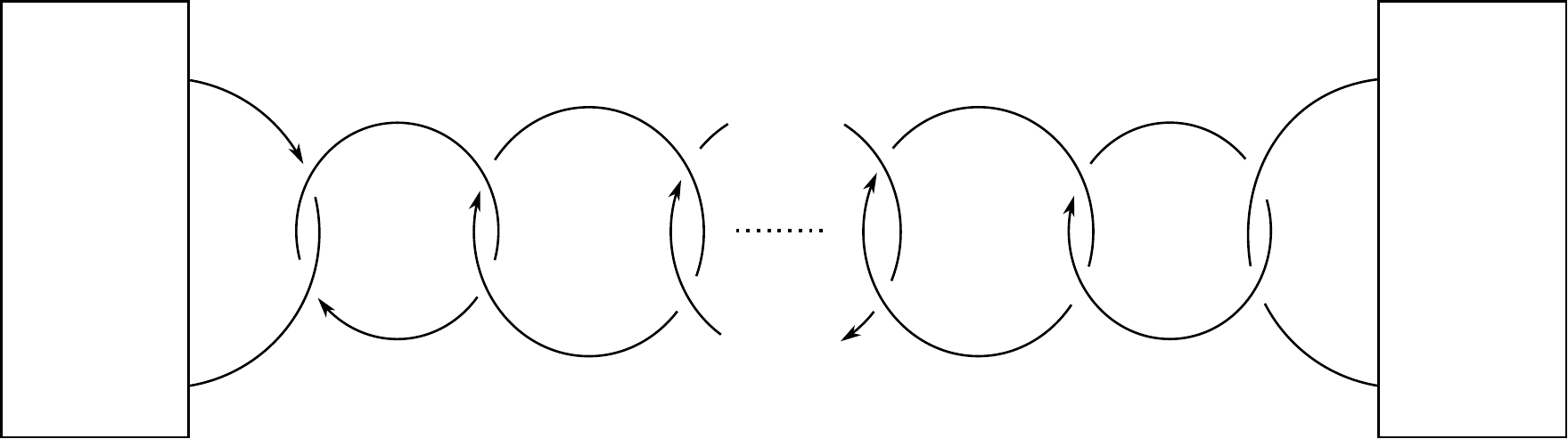
  \caption{
    Choosing the \(b\)-variables as indicated gives a shaping in which every crossing is pinched.
    We might have conjugate the holonomy representation of \(L_0'\) to achieve this, but that does not affect the value of \(\volf\).
  }
  \label{fig:torus-gluing-general-shaping}
\end{figure*}

\begin{lemma}
  \label{lemma:torus-gluing-general-shaping}
  There is a shaping of our diagram with the \(b\)-variables given in \cref{fig:torus-gluing-general-shaping}.
  In particular, we can choose all the new crossings to be pinched.
\end{lemma}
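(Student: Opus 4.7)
The strategy is direct verification of the shaping assignment. By the gauge-invariance of $\volf$ under conjugation of $\rho_2$ (from \cref{thm:partition-function-is-link-invariant}), we may first conjugate so that the $b$-variable on the exposed arc of $T'$ matches that on the exposed arc of $T$; denote this common value $x$. The meridian eigenvalues $m_i$ on the inserted unknot components $U_i$ are dictated by the topology worked out in the preceding proposition: $m_0 = \ell$, $m_1 = m$, $m_k = m'$, $m_{k+1} = \ell'$, while the relation $\mer_{i+1} = a_i \mer_i - \mer_{i-1}$ in the glued manifold forces $m_{i+1} m_{i-1} = m_i^{a_i}$ on eigenvalues, which determines the intermediate $m_i$ uniquely.

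Next I would assign $b$-variables to the new segments exactly as indicated in the figure. The rule---a $b$-variable is multiplied by $m_j$ whenever its segment passes under a strand of $U_j$---is tautologically the pinching relation $b_2 = m_1 b_1$ from \cref{thm:pinched-def} at every new crossing, and \cref{thm:pinched-def} then guarantees that all four equivalent pinching equations hold there. At a pinched crossing the factor $A$ in the braiding of \cref{def:braiding} simplifies to $1$, so the $a$-variables pass through unchanged. This determines the $a$-variables on all new segments, and via the rule of \cref{fig:region-variable-rule-log} determines the region variables on all new regions as well.

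The main obstacle is verifying that each inserted unknot $U_i$ closes up consistently: after following the shape data once around $U_i$, the longitude and meridian eigenvalues extracted via \cref{thm:shape-paper-results} must satisfy the Dehn-surgery relation $\ell_i = m_i^{a_i}$ coming from the $-a_i$ framing. This is a telescoping computation using the recursion $m_{i+1} m_{i-1} = m_i^{a_i}$ already established in the preceding proposition, with the sign convention on framings producing the correct exponent. Since the original shapings of $L_0$ and $L_0'$ are assumed consistent, the shape equations at their non-pinched crossings are untouched, and we obtain a global shaping of the augmented diagram in which every new crossing is pinched, as claimed.
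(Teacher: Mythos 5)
Your overall strategy is the same as the paper's: conjugate \(\rho_2\) so the exposed arcs share the value \(x\), observe that the \(b\)-variables in \cref{fig:torus-gluing-general-shaping} are exactly the pinching relations of \cref{thm:pinched-def} at every new crossing, and then argue that the remaining shape data (the \(a\)-variables) can be filled in. The first two steps are fine, and it is also correct that pinching makes the \(b\)-outputs automatic: with \(b_2 = m_1 b_1\) the factors \(1 - b_2/(m_1 b_1)\) in \cref{eq:b-transf-positive} vanish, so \(b_{2'} = b_1\) and \(b_{1'} = m_2 b_2 / m_1\) are forced and agree with the figure.

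The gap is in your treatment of the \(a\)-variables. You claim that at a pinched crossing the braiding factor \(A\) equals \(1\), so the \(a\)-variables ``pass through unchanged.'' This is false. Pinching only sets \(m_1 b_1 / b_2 = 1\), leaving
\[
  A \;=\; 1 - \left(1 - \frac{a_1}{m_1}\right)\left(1 - \frac{1}{m_2 a_2}\right),
\]
which still depends on the \(a\)-variables and is generically not \(1\) (take \(a_1 = a_2 = 2\), \(m_1 = m_2 = 1\) to get \(A = 3/2\)). Since your construction of the \(a\)-variables on all new segments rests entirely on this claim, the step where you actually produce the shaping does not go through as written. What remains to be argued is that a globally consistent choice of \(a\)-variables exists, i.e.\@ that the \(a\)-transformation rules \(a_{1'} = a_1 A^{-1}\), \(a_{2'} = a_2 A\) close up around each new unknot \(U_i\) and around the modified strands of \(T\) and \(T'\); this is most easily done in terms of region variables (equivalently the \(\gamma_j\) of \cref{fig:region-variable-rule-log}), where the new regions are unconstrained and the pinched crossing equations are easy to solve. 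Your final paragraph about the eigenvalue recursion \(m_{i+1} m_{i-1} = m_i^{a_i}\) addresses compatibility of the holonomy with the surgery framings, which is needed for the diagram to present the glued manifold, but it does not substitute for the missing consistency argument for the \(a\)-variables.
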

\begin{proof}
  The \(b\)-variables have been chosen so that all the crossings shown in the figure are pinched and the shapes adjacent to the crossings inside the boxes are the same as before we added the new components \(U_i\).
  This means that all we need to do is choose \(a\)-variables for the new diagram segments satisfying the gluing equations of the crossings show in \cref{fig:torus-gluing-general-shaping}.
  Because these crossings are pinched, this is easy to do.
\end{proof}

\begin{proof}[Proof of \cref{thm:gluing}]
  The point of doing all this is that \cref{fig:torus-gluing-general-shaping} is a shaped diagram presenting the glued manifold whose volume is easy to compute.
  There is an obvious flattening to pick: we choose some \(\beta\) with \(e^{2\pi i \beta} = x\) and pick the \(\mu_i\) so that
  \[
    \mu_{i+1} = a_i \mu_i - \mu_{i-1}.
  \]
  We then assign the horizontally-oriented strands \cref{fig:torus-gluing-general-shaping} the segment log-parameter \(\beta\) and the vertically-oriented strands segment log-parameter \(\beta + \mu_i\).
  This ensures that the flattening is compatible with the surgery coefficients on each component.

  Then, by definition the volume is
  \[
    \volf(L, \rho)(\mathfrak{s})
    +
    \volf(L', \rho')(\mathfrak{s}')
    +
    \text{ the volume of the new crossings,}
    +
    \text{ volumes of the added solid tori.}
  \]
  Because \(\volf(L, \rho)(\mathfrak{s}) = \cvol(M_1, L_1, \rho_1)(\mathfrak{s}_1)\) and \(\volf(L', \rho', \mathfrak{s}') = \cvol(M_2, L_2, \rho_2)\) we just need to show that the last two terms cancel.
  Because the new crossings are all pinched this is easy to check.
  Finally, we can show that the dependence on the log-decorations of \(T\) and \(T'\) drops out by the same computation as in the proof of \cref{lemma:log-decoration-indep-dehn}.

  This proves the gluing formula in \cref{thm:gluing} when we are identifying a single boundary component.
  To identify more than one component, we can repeat this process; we did not actually use the fact that the diagrams of \(L\) and \(L'\) are separated, even though this is depicted in \cref{fig:torus-gluing-general-diagram}.
\end{proof}

\printbibliography

\end{document}